\documentclass[12pt,final]{article}   

\usepackage{amsmath}
\usepackage{amsthm}
\usepackage{amssymb}
\usepackage{amsfonts}
\usepackage{latexsym}               
\usepackage{amsgen}
\usepackage[mathscr]{eucal}
\usepackage{mathrsfs}
\usepackage{enumerate}
\usepackage{bm}
\usepackage{here}

\usepackage{mathptmx} 
\usepackage{color}
\usepackage{fancybox}
\usepackage{graphicx}
\usepackage{cite}

\usepackage{showkeys}

\topmargin=0cm
\oddsidemargin=0truecm
\evensidemargin=0truecm
\textheight=20cm
\textwidth=16.5cm

\newtheorem{lem}{Lemma}[section]
\newtheorem{thm}[lem]{Theorem}
\newtheorem{pro}[lem]{Proposition}
\newtheorem{cor}[lem]{Corollary}

\newcommand{\bqn}{\begin{equation}}
\newcommand{\eqn}{\end{equation}}
\newcommand{\beqx}{\begin{equation*}}
\newcommand{\eeqx}{\end{equation*}}
\newcommand{\barr}{\begin{array}}
\newcommand{\earr}{\end{array}}
\newcommand{\beqn}{\begin{eqnarray}}
\newcommand{\eeqn}{\end{eqnarray}}
\newcommand{\beqnx}{\begin{eqnarray*}}
\newcommand{\eeqnx}{\end{eqnarray*}}
\newcommand{\bmt}{\begin{multline}}
\newcommand{\emt}{\end{multline}}

\numberwithin{equation}{section}


\newcommand{\supp}{\operatorname{supp}}


\def\det{\mathop{\rm det}\nolimits}




\newcommand{\sD}{\mathscr{D}}

\newcommand{\sF}{\mathscr{F}}
\newcommand{\sG}{{\mathscr G}}
\newcommand{\sK}{{\mathscr K}}
\newcommand{\sL}{\mathscr{L}}

\newcommand{\sN}{\mathscr{N}}
\newcommand{\sO}{\mathscr{O}}

\newcommand{\sW}{\mathscr{W}}


\newcommand{\cA}{{\mathcal A}}

\newcommand{\cC}{{\mathcal C}}

\newcommand{\cF}{{\mathcal F}}
\newcommand{\cG}{{\mathcal G}}

\newcommand{\cK}{{\mathcal K}}
\newcommand{\cL}{{\mathcal L}}

\newcommand{\cN}{{\mathcal N}}

\newcommand{\cO}{{\mathcal O}}

\newcommand{\cR}{{\mathcal R}}


\newcommand{\bbn}{{\mathbb N}}

\newcommand{\real}{{\mathbb R}}

\newcommand{\al}{\alpha}

\newcommand{\ga}{{\gamma}}

\newcommand{\ve}{\varepsilon}
\newcommand{\de}{\delta}

\newcommand{\la}{\lambda}

\newcommand{\ro}{\rho}





\newcommand{\er}{\eqref}
\newcommand{\lb}{\label}
\newcommand{\qu}{\quad}


\title{Ionized Gas in an Annular Region}

\author{%
{\large\sc Walter A. Strauss${}^1$}
{\normalsize and}
{\large\sc Masahiro Suzuki${}^2$}
}

\date{%
\normalsize
${}^1$%
Department of Mathematics and Lefschetz Center for Dynamical Systems, 
Brown University,   
\\
Providence, RI 02912, USA
\\ [7pt]
${}^2$ 
Department of Computer Science and Engineering, 
Nagoya Institute of Technology,
\\
Gokiso-cho, Showa-ku, Nagoya, 466-8555, Japan
}


\begin{document}

\maketitle

\begin{abstract}
{   We consider a plasma that is created by a high voltage difference $\lambda$, which is known as a Townsend discharge.  We consider it to be confined to the region $\Omega$ between two concentric spheres, two concentric cylinders, or more generally between two star-shaped surfaces.  We first prove that if the plasma is initially relatively dilute, then either it may remain dilute for all time or it may not, depending on a certain parameter $\kappa(\lambda, \Omega)$.  Secondly, we prove that there is a connected one-parameter family of steady states.  This family connects the non-ionized gas to a plasma, either with a sparking voltage $\lambda^*$ or with very high ionization, at least in the cylindrical or spherical cases.  }

\end{abstract}

\begin{description}

\item[{\it Keywords:}]
plasma, 
hyperbolic-parabolic-elliptic system,
steady states, nonlinear stability, bifurcation

\item[{\it 2020 Mathematics Subject Classification:}]
35M33, 
35B32, 
35B35, 
76X05 
\end{description}

\newpage

\section{Introduction}

This paper concerns a model for the ionization of a gas, such as air,  
due to a strong applied electric field.  The high voltage thereby creates a plasma, 
which may induce very hot electrical arcs.  
A century ago Townsend experimented with a pair of parallel plates 
to which he applied a strong voltage that produces cascades of free electrons and ions.  
This phenomenon is called the Townsend discharge or avalanche. 
The minimum voltage for this to occur is called the sparking voltage.  
The collision of gas particles within the plasma is sometimes called the $\alpha$-mechanism. 
For more details of the physical phenomenon, we refer the reader to \cite{Rai}.

In the most classical experiments the gas fills the region between two parallel plates.  
In this paper the gas resides in a  smooth bounded domain $\Omega$ in $\mathbb R^d$, where $d$ is either $2$ or $3$ and where $\Omega:=\Omega_2 \backslash\overline\Omega_1$,  
both $\Omega_{1}$ and  $\Omega_{2}$ being simply connected bounded open sets 
with $\overline\Omega_1 \subset \Omega_{2}$.  
Their boundaries are smooth; one of them is the anode $\cA$ and the other is the cathode $\cC$. 
A more specific geometric assumption is made below.  
The case of $\Omega\subset \mathbb R^2$ is a mathematical way to consider a gas inside a cylinder of base $\Omega$ that is uniform in the direction normal to $\Omega$.
   
Many models have been proposed to describe this ionization phenomenon (see \cite{AB,DL1,DW,DT1,Ku1,Ku2,LRE,Mo1}).  
In 1985 Morrow \cite{Mo1} was probably the first to provide a realistic model of its detailed mechanism. 
His model  consists of continuity equations for the electrons and ions coupled to the Poisson equation 
for the electrostatic potential.  
For simplicity in this paper we consider only electrons and a single species of positive ions.  
We do not consider other mechanisms such as `secondary emission', `attachment' or `recombination'.  
Thus inside the plasma $\Omega$ the model is as follows.  
Let $\rho_i, u_i, \rho_e, u_e$ and$ -\Phi$ denote the the ion density, ion velocity, electron density, electron velocity and electrostatic potential, respectively.  
Of course, $\rho_i$ and $\rho_e$ are nonnegative.  
They satisfy the equations 
\begin{subequations}\lb{Mmodel}
\begin{gather}
\partial_t\rho_i + \nabla\cdot(\rho_i {\bm u}_i)  =  k_e h(|\nabla\Phi|) \rho_e ,
\lb{eqi} \\
\partial_t\rho_e + \nabla\cdot(\rho_e {\bm u}_e)  =  k_e h(|\nabla\Phi|) \rho_e ,
\lb{eqe} \\
\Delta \Phi =\ro_i-\ro_e, 
\lb{eqp} \\
{\bm u}_i := k_i\nabla\Phi, \quad
{\bm u}_e:=-k_e \nabla\Phi-k_e \frac{\nabla \rho_e}{\rho_e}, 
\lb{equ}
\end{gather}
where $h(s) := ase^{-{b}/{s}}$ and $k_e, k_i>0$ are physical constants.  

The first two equations express the transport of ions and electrons, while their 
right sides express the rate per unit volume of ion--electron pairs created 
by the impacts of the electrons.  
Specifically, the coefficient $a\exp(-b |\nabla\Phi |^{-1} )$ is 
the first Townsend ionization coefficient $\al$, 
which was determined empirically, as discussed in 
Section 4.1 of \cite{Rai}.  It can be found explicitly in equation (A1) of \cite{Mo1}, for instance.
The last equations \eqref{equ} express the impulse of the electric field ${\bf E}=\nabla \Phi $ 
on the particles in opposite directions.  
The last term expresses the motion of the electrons due to the gradient of their density, 
while the ions barely move when impacted by electrons.  
Substituting the constitutive velocity relations \er{equ} 
into the continuity equations \er{eqi} and \er{eqe}, 
we observe that the system is of hyperbolic-parabolic-elliptic type. 

Of course we have to specify the boundary conditions.  
The boundary has two disconnected parts: $\partial\Omega=\cC \cup \cA$.  
We prescribe the initial and boundary conditions 
\begin{gather}
  (\ro_i,\ro_e)(0,x)=(\ro_{i0},\ro_{e0})(x),  
    \quad x\in \Omega,
  \lb{i1} \\  
  \ro_i(t,x)=\ro_e(t,x)=\Phi(t,x)=0, \quad t>0, \ x \in \cA,
  \lb{ba} \\
  \ro_e(t,x)=0, \quad \Phi(t,x)=\lambda>0, \quad  t>0, \ x \in \cC,
  \lb{bc}
\end{gather}
\end{subequations}
where $\cA$ and $\cC$ satisfy $\cA \cup \cC = \partial \Omega$, and denote the anode and cathode, respectively.
The cathode is negatively charged, producing a voltage difference $\lambda$.  
The boundary conditions mean that at each instant  
electrons are repelled by the cathode and absorbed by the anode, 
while the ions are repelled from the anode. 
Of course the non-negativity of the mass densities $\ro_{i0}$ and $\ro_{e0}$ is a natural condition. 
For compatibility at the zeroth and first orders, it is required to assume that the initial data satisfy
\begin{subequations}\label{com}
\begin{gather}
\rho_{i0}(x)=\rho_{e0}(x)= \nabla \Phi(0,x) \cdot \nabla \rho_{i0}(x)=0 , \quad x \in \cA,
\lb{coma}\\
\rho_{e0}(x)=0,  \quad x \in \cC,
\lb{comc}   \\ 
\rho_{i0}\ge0, \ \rho_{e0}\ge0, \quad x\in \Omega.  
\lb{comd}
\end{gather}
\end{subequations}

Townsend defined the {\it sparking voltage} as the threshold of voltage at which
gas discharge occurs and continues.  
Mathematically, the sparking voltage $\lambda^*$ is defined (roughly) as the lowest voltage 
for which a certain natural operator $A$ has zero as its lowest eigenvalue.  
See \eqref{kap0} and \eqref{bp1} for the precise definition.    

Morrow's model is physically reliable.
Indeed, the article \cite{DL1} by Degond and Lucquin-Desreux derives the model 
from the general Euler-Maxwell system by scaling assumptions, 
in particular by assuming a very small mass ratio between the electrons and ions. 
In an appropriate limit the Morrow model is obtained 
at the end of their paper in equations (160) and (163), which we have specialized to 
assume constant temperature and no neutral particles. 
We are also ignoring the $\gamma$-mechanism, which refers to the secondary emission 
of electrons caused by the impacts of the ions with the cathode.  

Suzuki and Tani in \cite{ST1} gave the first mathematical analysis of the Morrow model 
with both the $\alpha$ and $\gamma$-mechanisms.
Typical shapes of the cathode and anode 
in physical and numerical experiments are a sphere or a plate.
So they proved the time-local solvability of an initial boundary value problem over 
domains with a pair of boundaries that are plates or spheres.
In another paper \cite{ST2} they did a deeper analysis between two parallel plates 
while ignoring the $\gamma$-mechanism.
They proved that there exists a sparking voltage
at which the trivial solution (with $\rho_i=\rho_e=0$) goes from stable to unstable. 
This fact means that gas discharge (electrical breakdown) can occur and continue 
for a voltage greater than the sparking voltage.
Of course, it is expected that non-trivial solutions bifurcate around the sparking voltage.

In \cite{SS1} we proved the global bifurcation of non-trivial solutions between two parallel plates 
and took advantage of the positivity of the densities $\rho_{i}$ and $\rho_{e}$.  
Both \cite{ST2} and \cite{SS1} only treated parallel plates, for which the stationary states satisfy an ordinary differential equation in a single variable.  In the present paper we consider a region between two star-shaped surfaces, in which all the spatial variables appear.  Here we also introduce the concepts of stability index and sparking voltage in a general context.  Even in the radial case (a spherical or cylindrical shell), 
our linearized operator now has variable coefficients and its nullspace is no longer explicit.   

Townsend introduced a $\ga$-mechanism in order to guarantee that gas discharge occurs beyond a sparking voltage.  
However we showed in \cite{ST2} and \cite{SS1} that 
such a mechanism  is not necessary for gas discharge. 
In our subsequent paper \cite{SS2} we did include a $\gamma$-mechanism and once again we  found sufficient conditions for the global bifurcation of non-trivial solutions between two parallel plates.
A notable point is that there are even some circumstances that do include a $\ga$-mechanism under which a sparking voltage does not exist, which is in contrast to Townsend's original theory.    

So far as we know, there is no rigorous mathematical study 
that analyzes gas discharge in domains other than plates.  
 Of course, there are many physical studies of gas discharges with different configurations of anode and cathode but without any analysis of partial differential equations.
The article \cite{LU1} investigated the gas discharge on cylinders or so-called {\it wire to wire} domains that are regions exterior to a pair of parallel cylinders.  
The analysis is similar in spirit to Townsend's theory.  On the other hand, Durbin and Turyn \cite{DT1} numerically  simulated non-trivial steady states of ionization in coaxial concentric and accentric cylinders 
by using a similar model to Morrow's.   
We do an analysis of this configuration in Sections 4 and 5 for $d=2$.
Morrow \cite{Mo2} also simulated the ionization in concentric spheres ($d=3$) using his model.
We also refer the reader to \cite{TK1}, which reviews the recent progress of numerical simulations for gas ionization.


\section{Main results}\lb{S2}  
\subsection{Geometric assumption and notation}\label{S2.1}
For mathematical convenience, we will decompose the electrostatic potential as
\[
 \Phi=V+\lambda H,
\]
where $V$ solves $\Delta V = \rho_i-\rho_e$ and vanishes on $\partial\Omega$, and 
$H$ is a solution to 
the Laplace equation $\Delta H=0$ with the boundary conditions $H=0$ on $\cA$ and $H=1$ on $\cC$.  
Clearly the harmonic function $H$  depends entirely on the domain $\Omega$ and the choice of $\cA$ and $\cC$.  
The maximum principle implies 
\begin{subequations}\label{H0}
\begin{align}
0<H<1 \quad &\text{in $\Omega$},
\\
\nabla H \cdot \bm{n} <0 \quad &\text{on $\cA$}, 
\\
\nabla H \cdot \bm{n} >0 \quad &\text{on $\cC$},
\end{align}
\end{subequations}
where $\bm{n}$ is the unit normal vector of $\partial \Omega$ pointing away from $\Omega$.  

We make the following fundamental assumption on the shape of $\Omega$, 
 expressed in terms of the harmonic function: 
\begin{align}\label{Hasp1}
|\nabla H| >0 \quad &\text{in $\Omega$}.
\end{align}
This is a condition depending only on $\Omega$. 
In fact, a sufficient condition for \eqref{Hasp1} to hold is that there exists a point $x_{0} \in \Omega_{1}$ 
such that both 
$\Omega_{1}$ and $\Omega_{2}$ are {\it starshaped} with respect to it. 
This fact is proven in Theorem 1 of Section 9.5 in \cite{Ev}. 
(See Figure \ref{fig1}.)

\begin{figure}[H]
\begin{center}
    \includegraphics[width=6.5cm, bb=0 0 785 708]{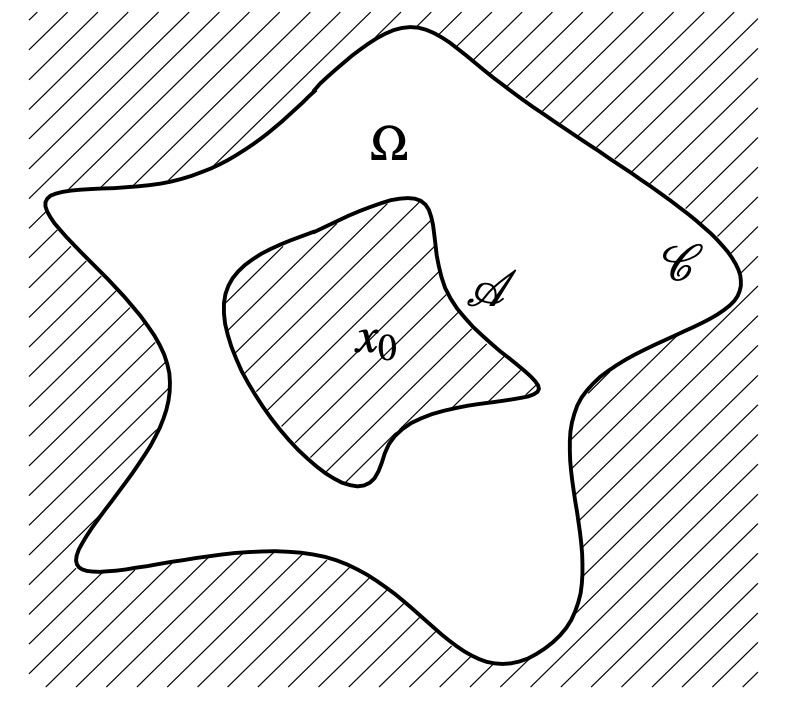}
  \caption{domain $\Omega$}
  \label{fig1} 
\end{center}
\end{figure}

From \eqref{H0}, \eqref{Hasp1}, and the boundary conditions $H=0$ on $\cA$ and $H=1$ on $\cC$, we also see that 
\begin{gather}\label{Hasp3}
\bm{n}=-\frac{\nabla H}{|\nabla H|} \quad \text{on $\cA$}, \quad
\bm{n}=\frac{\nabla H}{|\nabla H|} \quad \text{on $\cC$}.
\end{gather}
It is obvious that the associated stationary problem of \eqref{Mmodel} has the  
trivial stationary solution 
\[
(\rho_{i},\rho_{e},\Phi)=(0,0,\lambda H). 
\]

A simple example is the three-dimensional radial case, the region between two concentric spheres 
of radius $r_a$ for the anode and $r_c$ for the cathode.  In that case, 
$ H(r) = \frac{r_c(r-r_a)} {r(r_c-r_a)}$  where $r=|x|$.

\bigskip 
\noindent {\bf Notation.} 
For a non-negative integer $k$ and a number $\alpha \in (0,1)$, 
$C^{k,\alpha}(\overline{\Omega})$ is the H\"older space.
We sometimes abbreviate $C^{0,\alpha}(\overline{\Omega})$ by $C^{\alpha}(\overline{\Omega})$.
In addition, $C^{k,\alpha}_{0}(\overline{\Omega})$ is a subset of $C^{k,\alpha}(\overline{\Omega})$, whose elements (not their derivatives) vanish on the boundary $\partial \Omega= \cA \cup \cC$.
For $ 1 \leq p \leq \infty$, 
$L^p(\Omega)$ is the Lebesgue space. 
For a non-negative integer $k$,
$H^k(\Omega)$ is the $k$-th order Sobolev space in $L^2$ sense, 
equipped with the norm $\Vert\cdot\Vert_k$. 
Note that $H^0(\Omega) = L^2(\Omega)$ and we define $\Vert\cdot\Vert:=\Vert\cdot\Vert_0$.
The inner product of $L^2(\Omega)$ is denoted by
$\langle f,g \rangle$ for $f,g \in L^2(\Omega)$.
Moreover, $H_0^1(\Omega)$ is closures 
of $C_0^\infty(\Omega)$ with respect to the $H^1$-norm.
In the case that $\Omega$ is an annulus or a spherical shell, all the spaces with the subscript $r$, for instance $C^{k,\alpha}_{r}(\overline{\Omega})$, $L^p_{r}(\Omega)$, and $H^k_{r}(\Omega)$, are subsets whose elements are radial.
We denote by $C^m([0,T];X)$ the space of the $m$-times continuously
differentiable functions on the interval $[0,T]$ with values in a Banach space $X$,
and by $H^m(0,T;X)$ the space of $H^m$--functions 
on $(0,T)$ with values in a Banach space $X$.
Furthermore, we denote by $c$ and $C$ generic positive constants and
by $C_{\alpha}$ a generic positive constant 
depending on a special parameter $\al$.

\subsection{Dilute plasma} 
The stability of the trivial stationary solution $(\rho_{i},\rho_{e},\Phi)=(0,0,\lambda H)$ depends on the following function which involves both the domain $\Omega$ and the rate of pair creation.  
We define the {\it stability index}
\begin{gather}\label{kap0}
\kappa(\lambda) := \inf_{u \in H^{1}_{0}(\Omega), u\not\equiv 0}  
\frac{\displaystyle \int_{\Omega} |\nabla u|^{2} -g(\lambda |\nabla H|) u^{2} dx} 
{\displaystyle  \int_{\Omega} u^{2} dx}  
\end{gather} 
 for $\lambda>0$, where 
\begin{equation}   \label{g:def}
 g(s):= h(s)- \tfrac{s^2}{4} =    ase^{-\frac{b}{s}}-\tfrac{s^{2}}{4}\ .
\end{equation} 
The sign of $\kappa(\lambda)$ depends on the rate of ion-electron pair creation.  For instance, 
$\kappa(\lambda)<0$ if $a$ is large enough.
We also define the self-adjoint linear operator 
\begin{gather}  \label {operatorA}
A = -\Delta - g(\lambda |\nabla H|)  
\end{gather}
 on $L^2(\Omega)$ with domain $H^2(\Omega)\cap H^1_0(\Omega)$.  
Clearly 
$$
\kappa(\lambda) \text{ is the lowest eigenvalue of } A. $$  
It is well-known that 
it is a simple eigenvalue and its eigenfunction $\varphi_1$ does not vanish in $\Omega$.  

It will be convenient to consider {\it $\Phi$ as uniquely determined by $\rho_i$ and $\rho_e$} by solving the Poisson equation \eqref{eqp}.  
We also denote $\cR = (\rho_i,\rho_e)$ and  $\cR_0 = (\rho_{i0},\rho_{e0})$.  
We also define the Hilbert space 
\[  
X = H^2(\Omega)\times H^2(\Omega).  
\]  
Our first theorem asserts that the trivial solution $\rho_i=\rho_e=V=0$
is {\it asymptotically stable} if $\kappa(\lambda)>0$.  

\begin{thm}[Stability]\lb{thm1} 
  For given $\la$, suppose 
the stability index $\kappa(\lambda)$ is positive.
There exists $\ve>0$ such that
if the initial data $\cR_0 \in X$ with  $\|\cR_0\|_{X} < \ve$
satisfies the compatibility conditions \eqref{com}, 
then the problem \er{Mmodel} has a unique solution $\cR \in C([0,\infty);X)$ 
that also satisfies 
\begin{subequations}\lb{reg1}
\begin{align}
& \rho_i\in C^1([0,\infty);H^1(\Omega)), \quad \rho_i\geq0,
\\
& \rho_e\in  L^2(0,\infty;H^3(\Omega))\ \cap\  H^1(0,\infty;H^{1}(\Omega)), \quad \rho_e\geq0.
\end{align}
\end{subequations}
Moreover, 
there is a constant $C$ such that $\sup_{0\le t<\infty} \|\cR(t)\|_X \le C\|\cR_0\|_X$  .  
Furthermore,  $\|\cR(t)\|_X$ converges to zero exponentially fast as $t\to\infty$.  
\end{thm}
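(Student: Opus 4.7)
The plan is to couple a standard local well-posedness argument with a weighted $H^2$ energy estimate whose dissipation rate is governed by $\kappa(\lambda)>0$. Local existence in $X$ on a short interval $[0,T_*]$ can be built by contraction: \eqref{eqi} is a linear transport equation in $\rho_i$ once $\Phi$ is given, \eqref{eqe} becomes (after substituting \eqref{equ}) a quasilinear parabolic equation in $\rho_e$, and $V$ is recovered from $\rho_i-\rho_e$ by elliptic regularity with zero Dirichlet data. Non-negativity of $\rho_e$ follows from the parabolic maximum principle, and that of $\rho_i$ from the method of characteristics.

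The key step is an energy estimate where the linear part is exactly $A$. For $\rho_e$ I would make the conjugation $\phi:=e^{\lambda H/2}\rho_e$; since $\Delta H=0$, a direct calculation shows that the electron equation rewrites as
\begin{equation*}
\partial_t\phi+k_eA\phi=e^{\lambda H/2}\mathcal{N}_e,
\end{equation*}
where $\mathcal{N}_e$ collects the genuinely quadratic terms $-k_e\nabla\cdot(\rho_e\nabla V)+k_e[h(|\nabla\Phi|)-h(\lambda|\nabla H|)]\rho_e$, and $\phi\in H^1_0(\Omega)$ because $\rho_e$ vanishes on the whole of $\partial\Omega$. Pairing with $\phi$ in $L^2$ and using $\langle A\phi,\phi\rangle\ge\kappa(\lambda)\|\phi\|^2$ (the Rayleigh characterization \eqref{kap0}) produces $L^2$-dissipation for $\rho_e$ at the rate $k_e\kappa(\lambda)$, since $e^{\pm\lambda H/2}$ is bounded on $\overline\Omega$. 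For $\rho_i$ no such intrinsic dissipation exists, so I would test the ion equation against $w\rho_i$ with the weight $w:=e^{-KH}$ for a large constant $K>0$. Integration by parts, together with $\rho_i|_{\cA}=0$, $\nabla H\cdot\bm{n}>0$ on $\cC$ (from \eqref{Hasp3}), and $\Delta H=0$, yields
\begin{equation*}
\tfrac12\tfrac{d}{dt}\|\sqrt{w}\,\rho_i\|^2+\tfrac{k_i\lambda K}{2}\int_\Omega w|\nabla H|^2\rho_i^2\,dx+\tfrac{k_i\lambda}{2}\int_{\cC}w(\nabla H\cdot\bm{n})\rho_i^2\,dS\le C\|\rho_e\|\,\|\sqrt{w}\,\rho_i\|+\mathrm{NL},
\end{equation*}
and the assumption \eqref{Hasp1} that $|\nabla H|^2\ge c_0>0$ on $\overline\Omega$ turns the volume term into a genuine exponential dissipation whose rate can be tuned by $K$.

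To promote this to $X=H^2\times H^2$ I would commute up to two spatial derivatives through the equations, cancelling the boundary commutators by means of the compatibility conditions \eqref{com} at $t=0$. Elliptic regularity gives $\|V\|_{H^{k+2}}\le C(\|\rho_i\|_{H^k}+\|\rho_e\|_{H^k})$, and the Sobolev embedding $H^2(\Omega)\hookrightarrow L^\infty(\Omega)$ (valid for $d\in\{2,3\}$) controls the quadratic nonlinearities, with smoothness of $h$ preserved since $|\nabla\Phi|\ge\lambda\inf|\nabla H|-\|\nabla V\|_{L^\infty}>0$ for small data. Combining the two estimates produces a differential inequality of the form $\tfrac{d}{dt}\mathcal{E}+2\alpha\mathcal{E}\le C\mathcal{E}^{3/2}$ for an energy $\mathcal{E}(t)$ equivalent to $\|\cR(t)\|_X^2$ and some $\alpha>0$; a standard continuity argument then yields both the global-in-time bound and the exponential decay. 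The parabolic smoothing of \eqref{eqe} supplies the additional regularity $\rho_e\in L^2(0,\infty;H^3)\cap H^1(0,\infty;H^1)$ stated in \eqref{reg1}.

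The main obstacle is the absence of intrinsic dissipation in the transport equation \eqref{eqi}: the linearization $\rho_i\mapsto k_i\lambda\nabla H\cdot\nabla\rho_i$ is skew-symmetric in $L^2$ apart from boundary outflow at $\cC$, so exponential decay of $\rho_i$ has to be extracted from the geometry of the $\nabla H$-flow. The weight $w=e^{-KH}$ works only because assumption \eqref{Hasp1} forces integral curves of $\nabla H$ to cross $\Omega$ in uniformly bounded time; propagating this dissipation at the $H^2$-level, where commutators of $w$ and $\partial^2$ with the nonlinearities generate boundary and lower-order remainder terms that must be reabsorbed without destroying the sharp rate dictated by $\kappa(\lambda)$, is the technical crux of the proof.
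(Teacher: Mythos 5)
Your proposal follows essentially the same strategy as the paper: the conjugation $\phi=e^{\lambda H/2}\rho_e$ producing $\partial_t\phi+k_eA\phi=\text{(quadratic)}$ is exactly the paper's change of unknown $R_e=\rho_e e^{\lambda H/2}$, and the exponential weight $w=e^{-KH}$ on the ion equation matches the paper's $R_i=\rho_i e^{-pH}$ (so $\|\sqrt{w}\,\rho_i\|^2$ with $K=2p$ is precisely $\|R_i\|^2$). The $L^2$ dissipation from $\kappa(\lambda)>0$ via the Rayleigh quotient, the favorable sign of the $\cC$-boundary term, the elliptic control $\|V\|_{H^{k+2}}\lesssim\|\rho_i\|_{H^k}+\|\rho_e\|_{H^k}$, and the recovery of $\|\rho_e\|_{H^2}$ from $\|\partial_t\rho_e\|+\|\rho_e\|_{H^1}$ using the parabolic equation as a Poisson problem are all as in the paper.

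However, there is a genuine gap in the step you correctly flag as ``the technical crux'': the $H^2$ estimate for $\rho_i$. You propose to ``cancel the boundary commutators by means of the compatibility conditions \eqref{com} at $t=0$,'' but this cannot work. The compatibility conditions constrain the \emph{initial} data and are used only to obtain local existence with the stated regularity; they do not eliminate, for $t>0$, the boundary terms on the anode $\cA$ that arise when you integrate $\partial^\alpha(\text{ion eq.})\cdot 2e^{\gamma t}\partial^\alpha R_i$ by parts. The issue is that $\cA$ is the \emph{inflow} boundary for the characteristic field $\nabla(V+\lambda H)$, and while $R_i|_\cA=0$, the normal derivatives $\partial_n R_i|_\cA$ and $\partial_n^2 R_i|_\cA$ are not zero and are not prescribed data. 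The paper resolves this by covering a neighborhood of $\cA$ with local charts $\bm{\Phi}_j$ that flatten the boundary, taking $\partial^\alpha_y$ in the flattened coordinates so that the tangential traces of $\partial^\alpha\hat{R}_i$ vanish, and deriving the one nonzero trace $\partial_{y_d}^2\hat{R}_i|_{y_d=0}=\mathcal B$ \emph{from the transport equation itself} (using $\hat{R}_e|_{y_d=0}=0$ and $\|\bm{r}_j\|_{L^\infty}\le\tfrac12$ to keep the denominator of $\mathcal B$ bounded away from $0$), together with a trace estimate $\|\hat\zeta_j'\mathcal B\|_{L^2}\lesssim\|R_e\|_{2}$. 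Without this mechanism, or an equivalent one (e.g.\ differentiating only along tangential vector fields near $\cA$ and recovering the normal derivatives from the equation), your proposed $H^2$ energy estimate for $\rho_i$ does not close, so the proposal as written does not yield the global a priori bound.

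A minor additional point: your final inequality $\frac{d}{dt}\mathcal E+2\alpha\mathcal E\le C\mathcal E^{3/2}$ should not aim at the ``sharp rate dictated by $\kappa(\lambda)$'': the paper's rate $\gamma$ is just a small positive constant chosen so that the Gr\"onwall-type argument closes, not $k_e\kappa(\lambda)$; the interplay of $p$ (which must be taken large after $\gamma$ is fixed) and $N(T)$ (which must be small relative to $p$) is what makes the continuation argument work.
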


Our second theorem asserts that the trivial solution is {\it unstable} if $\kappa(\lambda)<0$.  
\begin{thm}[Instability] \lb{thm2} 
  For given $\la$, suppose  
the stability index $\kappa(\lambda)$ is negative.
There exists  $\varepsilon>0$ such that for all $\delta>0$, 
the problem \eqref{Mmodel} with the initial data $\|\mathscr R_0\|_X < \delta$ and $\rho_{e0} \neq 0$ has a unique solution $\mathscr R \in C([0,T];X)$ such that  $\|\mathscr R(T)\|_X \geq  \varepsilon$ for some $T>0$.
\end{thm}

If there are no electrons initially, then there will be no electrons in the future.  
It has been observed physically that if the initial electron density $\rho_{e0}$ vanishes,
then there is no gas discharge.
If in addition there are few ions initially, then the ions will also eventually disappear 
because they will be absorbed by the anode.  This is the content of the following proposition.
\begin{pro}[No electrons]\lb{pro1}
Let $\lambda>0$ but make no assumption on $\kappa(\lambda)$. 
There exists $\ve>0$ such that 
if the initial data $\cR_0 \in X$ satisfy
$\rho_{e0}=0$, $\|\rho_{i0}\|_{H^1} < \ve$, and the compatibility condition \eqref{com},
then the problem \er{Mmodel} has a unique solution $\cR \in C([0,\infty);X)$ that satisfies \er{reg1}.
Furthermore, there exists $T_0>0$ such that 
\begin{equation*}
\cR (t,x)=\bm{0} \qu \text{for} \ \ (t,x) \in [T_0,\infty) \times \Omega. 
\end{equation*}
\end{pro}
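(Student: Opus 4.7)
The plan is in two stages: first reduce the system to a single transport equation for $\rho_i$ by showing $\rho_e \equiv 0$, and then exploit the harmonic function $H$ as a Lyapunov function for the ion characteristics to force extinction in finite time. Substituting the velocity law \eqref{equ} into \eqref{eqe} converts the $\rho_e$-equation into the linear parabolic equation
\[
\partial_t \rho_e - k_e \Delta \rho_e = k_e \nabla \cdot (\rho_e \nabla \Phi) + k_e h(|\nabla \Phi|) \rho_e,
\]
valid for any admissible $\Phi$. With $\rho_{e0} = 0$ and homogeneous Dirichlet data on all of $\partial\Omega = \cA \cup \cC$, uniqueness for this linear problem forces $\rho_e \equiv 0$ throughout the evolution. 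The system then reduces to
\[
\partial_t \rho_i + k_i \nabla \cdot (\rho_i \nabla \Phi) = 0, \qquad \Delta \Phi = \rho_i, \qquad \rho_i|_{\cA} = 0, \qquad \Phi|_{\partial\Omega} = \lambda H|_{\partial\Omega}.
\]

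Next I would construct a local solution by a Picard-type iteration alternating the linear transport equation for $\rho_i$ with the Poisson solve $\Delta V = \rho_i$, $V|_{\partial\Omega}=0$, writing $\Phi = V + \lambda H$. Elliptic regularity combined with Sobolev embedding gives $\|\nabla V\|_{L^\infty} \lesssim \|\rho_i\|_{H^1}$, so for small $\|\rho_{i0}\|_{H^1}$ the drift $k_i \nabla \Phi$ is $C^1$-close to $k_i \lambda \nabla H$ and smooth enough to generate a flow. Writing the transport equation along characteristics $\dot X = k_i \nabla \Phi(t,X)$ yields
\[
\frac{d}{dt} \rho_i(t, X(t;x_0)) = -k_i \rho_i \Delta \Phi = -k_i \rho_i^2,
\]
so $\rho_i$ is nonincreasing along characteristics and $\|\rho_i(t)\|_{L^\infty} \le \|\rho_{i0}\|_{L^\infty}$ globally. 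The compatibility condition \eqref{coma} provides the matching of the time derivative at $t=0$ on $\cA$, and combined with elliptic control of $V$, this extends the solution to all of $[0,\infty)$ in the class \eqref{reg1}.

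The crucial finite-time extinction follows by using $H$ itself as a Lyapunov function. By \eqref{Hasp1}, \eqref{H0}, and compactness of $\overline\Omega$, there is $\delta_0 > 0$ with $|\nabla H| \ge \delta_0$ on $\overline\Omega$. Along any ion characteristic,
\[
\frac{d}{dt} H(X(t)) = k_i \lambda |\nabla H|^2 + k_i \nabla H \cdot \nabla V \ge \tfrac{1}{2} k_i \lambda \delta_0^2,
\]
provided $\|\nabla V\|_{L^\infty}$ is small enough, which is ensured by taking $\ve$ small. Since $0 \le H \le 1$ with $H = 1$ on $\cC$, every characteristic exits $\overline\Omega$ through the cathode in time at most $T_0 := 2/(k_i \lambda \delta_0^2)$. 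Because $\supp \rho_i(t,\cdot) \subseteq X(t;\supp \rho_{i0})$, this forces $\rho_i \equiv 0$ on $[T_0,\infty) \times \Omega$, whence $V \equiv 0$, $\Phi \equiv \lambda H$, and $\cR \equiv \bm 0$ for $t \ge T_0$.

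The principal technical obstacle is the circular coupling between the transport equation and the Poisson solve in the regularity class \eqref{reg1}: propagating $H^2$-regularity of $\rho_i$ along the flow demands second-derivative control of the drift $\nabla \Phi$, which itself comes from elliptic regularity applied to $\rho_i$. Closing this bootstrap in a compatible Banach space on $[0,T_0]$ and verifying that the Lyapunov lower bound above remains uniformly positive over that interval is exactly where the smallness of $\|\rho_{i0}\|_{H^1}$ is used; the rest is a routine propagation of regularity along a smooth flow.
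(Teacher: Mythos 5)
Your proof is correct but takes a genuinely different route from the paper in the key extinction step. Both proofs begin by establishing $\rho_e\equiv 0$: you invoke uniqueness for the linear parabolic problem with zero data, while the paper runs an energy estimate and Gronwall — these are essentially the same fact. For the finite-time vanishing of $\rho_i$, however, you track the harmonic function $H$ as a Lyapunov function along the ion characteristics $\dot X = k_i\nabla\Phi$, obtain $\frac{d}{dt}H(X(t))\ge\tfrac12 k_i\lambda\delta_0^2>0$ from the lower bound $|\nabla H|\ge\delta_0$ (assumption \eqref{Hasp1}) and the smallness of $\nabla V$, and conclude that every forward characteristic exits $\overline\Omega$ through $\cC$ by the explicit time $T_0=2/(k_i\lambda\delta_0^2)$. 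Since the backward characteristic through any $(t,x)$ with $t\ge T_0$ must then have entered through $\cA$ where $\rho_i=0$, the density vanishes identically on $[T_0,\infty)\times\Omega$. The paper instead first proves $0<\Phi<\lambda$ in $\Omega$ for $t\ge1$ (via the maximum principle on $V$ combined with a characteristic argument near the anode), then integrates the hyperbolic equation over the spacetime wedge $Q=\{1\le t\le T,\ 0\le\Phi\le c(t-1)\}$ whose lateral boundary $S_2$ is a moving level set of $\Phi$, shows the flux through $S_2$ has the good sign, and closes with Gronwall to force $\|\rho_i(T)\|=0$. These are the Lagrangian and Eulerian faces of the same mechanism: the drift $k_i\nabla\Phi$ sweeps the ions monotonically in $H$ (equivalently, in $\Phi$) from anode to cathode in a time controlled by $\lambda$ and $\delta_0$. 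Your version is more geometric and gives a transparent, explicit $T_0$; the paper's stays entirely within the $L^2$ energy framework used throughout Section 3 and does not require working explicitly with the flow map. On the well-posedness/bootstrap side you gesture at a plan rather than carrying it out, but the paper likewise defers that part to Lemma \ref{local-time} and the a priori estimate of Lemma \ref{apriori2}, so the level of detail is comparable.
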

 
 \subsection{Sparking voltage}

{\bf Definition. }
 The {\it sparking voltage} is 
\begin{equation}\lb{bp1}
\lambda^{*}:=\inf\{ \lambda>0 \ | \ \kappa(\lambda)=0, \  \kappa'(\lambda)<0 \}.
\end{equation}

As asserted in the stability and instability theorems, 
any solution that is initially small converges to the trivial solution 
as $t$ tends to infinity in case $\lambda < \lambda^{*}$ with $\la$ near $\la^*$, while 
it leaves a neighborhood of the trivial solution  if $\rho_{e0} \neq 0$, $\lambda > \lambda^{*}$ and $\la$ near $\la^*$.  
This means that $\lambda^{*}$ is the threshold of voltage at which
gas discharge occurs and continues. 

\begin{lem}\label{SparkingV1}
If $a$ is sufficiently large and $b$ is sufficiently small, the equation $\kappa (\lambda)=0$ has 
exactly two solutions $\lambda^{*}$ and $\lambda^{\#}$ such that $\lambda^{*}<\lambda^{\#}$, $\kappa' (\lambda^{*})<0$, and $\kappa'(\lambda^{\#})>0$.
\end{lem}

\begin{proof} 
The function $g(s)$ defined in \eqref{g:def}  is either negative for all positive $s$ or it is positive in a finite interval $(s_1,s_2)$ 
in which it has a single local maximum $M$.  This can be seen clearly by looking for the function's zeros.  
If $s$ is a zero of $g(s)$, then $t=\frac{b}{4a}e^t$ where $t=\frac bs$.  The equation for $t$ clearly has at most two zeros.  In fact, it has two zeros that are very far apart 
if the constant $b$ is much smaller than $a$.  
Furthermore, the maximum $M\to\infty$ as $a\to\infty$.  So for large $a$ and small $b$, $g(s)$ 
has a wide and tall positive hump.  
In order to simplify the nonlinear term but retain the hump, it is convenient to define 
a modified function $\tilde g$ as follows.  
  
We set 
\begin{gather*}
\tilde{g}(s):=as-\frac14 s^{2}, \quad 
\tilde{\kappa}(\lambda) := \inf_{u \in H^{1}_{0}(\Omega), u\not\equiv 0}  
\frac{\displaystyle \int_{\Omega} |\nabla u|^{2} -\tilde{g}(\lambda |\nabla H|) u^{2} dx} 
{\displaystyle  \int_{\Omega} u^{2} dx}.
\end{gather*}
It is clear that $\tilde{\kappa} \in C^{1}(\mathbb R_{+})$ and 
\begin{gather*}
\tilde{\kappa}'(\lambda)=-\int_{\Omega} |\nabla H| \tilde{g}'(\lambda |\nabla H|)\tilde{\varphi}_{1}^{2} dx,
\end{gather*}
where $\tilde{\varphi}_{1}>0$ is the unique eigenfunction corresponding to the lowest eigenvalue of 
$-\Delta-\tilde{g}$ for which $\|\tilde{\varphi}_{1}\|_{L^2} =1$.
We note that $\tilde{g}$ is a good approximation of $g$ in the sense that $g$ converges to $\tilde{g}$ as $b \to 0$.

We claim that the lemma follows provided there exist numbers $\zeta_{1}$, $\zeta_{2}$, $\zeta_{3}$, and $\zeta_{4}$ such that
\begin{subequations}\label{zeta1}
\begin{gather}
0<\zeta_{1}<\zeta_{2}<\zeta_{3}<\zeta_{4}<\infty,
\\
\tilde{\kappa}>0 \text{ on } (0,\zeta_{1}], \quad \tilde{\kappa}'<0 \text{ on } (0,\zeta_{2}],
\\
\tilde{\kappa}<0 \text{ on } [\zeta_{2},\zeta_{3}],
\\
\tilde{\kappa}'>0 \text{ on } [\zeta_{3},\infty), \quad \tilde{\kappa}>0 \text{ on } [\zeta_{4},\infty).
\end{gather}
\end{subequations}
This means that $\tilde{\kappa}(\lambda)$ is first positive, then decreasing, then negative, then  increasing and finally positive.  

First we prove the claim.  
We regard $\kappa$ as a function of $(\lambda,b) \in (0,\infty) \times [0,1]$.
Then $\kappa$ and $\partial_{\lambda} \kappa$ are uniformly continuous on any compact subinterval.  
For $b=0$ we have $\kappa(\lambda,0)=\tilde{\kappa}(\lambda)$ and $\partial_{\lambda}  \kappa(\lambda,0)=\tilde{\kappa}'(\lambda)$. 
Since $0<e^{-b/s} \leq 1$, we have $\tilde{\kappa}(\lambda) \leq \kappa(\lambda,b)$.
Therefore 
\begin{gather*}
{\kappa}>0 \text{ on } ((0,\zeta_{1}] \times [0,1]) \cup ([\zeta_{4},\infty) \times [0,1]).
\end{gather*}
Because of \eqref{zeta1} and the uniform continuity of $\kappa$ and $\partial_{\lambda} \kappa$,
there exists $\delta_{1}>0$ such that 
\begin{gather*}
{\kappa}'<0 \text{ on } [\zeta_{1},\zeta_{2}]\times[0,\delta_{1}],
\quad
{\kappa}<0 \text{ on } [\zeta_{2},\zeta_{3}]\times[0,\delta_{1}],
\quad
{\kappa}'>0 \text{ on } [\zeta_{3},\zeta_{4}]\times[0,\delta_{1}].
\end{gather*}
So we infer that the equation $\kappa (\lambda)=0$ has exactly two solutions $\lambda^{*}$ and $\lambda^{\#}$ such that $\lambda^{*}<\lambda^{\#}$, $\kappa' (\lambda^{*})<0$, and $\kappa'(\lambda^{\#})>0$ for $b \ll 1$.  This proves the claim.  

It remains to verify  \eqref{zeta1} in case $a \gg 1$, which we accomplish as follows.    
The domain $\Omega$ is fixed.  By \eqref{Hasp1} and the boundary conditions,  
we have $0<\omega_1:=\inf_{x\in\Omega}|\nabla H(x)|$ and $\omega_2:=\sup_{x\in\Omega}|\nabla H(x)|<\infty$.
The operator $-\Delta$ in $\Omega$ with Dirichlet boundary conditions has a smallest 
eigenvalue $\sigma_1>0$. 
We define the open subset $D:=\{x \in \Omega \, | \, \omega_{1} < |\nabla H| < \min\{3\omega_{1}/2,(\omega_{1}+\omega_{2})/2 \} \}$.  We choose $\phi_{D}$ such that ${\rm \supp} \phi_{D} \subset D$ and $\int_{\Omega} \phi_{D}^{2} dx=1$.   Now we choose $\zeta_{1}>0$ small enough that 
\begin{gather*}
\tilde{\kappa}(\lambda) \geq \sigma_{1} - \tilde{g}(\zeta_{1} \omega_{2}) >0  
\end{gather*}
for $\lambda \in (0,\zeta_{1}]$.  
We set $\zeta_{2}:=3\omega_{2}^{-1}a/2$.  Then 
\begin{gather*}
\tilde{\kappa}'(\lambda)= - \int_{\Omega} |\nabla H| \tilde{g}'(\lambda |\nabla H|)\tilde{\varphi}_{1}^{2} dx<0  
\end{gather*} 
for $\lambda \in (0,\zeta_{2}]$, due to the positivity of  $\tilde{g}'(s)$ in $(0,2a)$.  

Next we set $\zeta_{3}:=5\omega_{1}^{-1}a/2$ and observe that  for $\lambda \in [\zeta_{2},\zeta_{3}]$ we have 
\begin{gather*}
\frac{3}{2}\frac{\omega_{1}}{\omega_{2}}a=\omega_{1} \zeta_{2} \leq \lambda |\nabla H| \leq \frac{3}{2} \omega_{1} \zeta_{3} = \frac{15}{4} a \quad \text{on } {\rm supp} \phi_{D}.  
\end{gather*}  
Since $\frac{3\omega_1}{2\omega_2}a < 2a$  and $\tilde g$ is maximized at $2a$, we have  
\begin{gather*}
 \tilde{g}(\lambda |\nabla H| ) \geq  \min\left\{\tilde{g} \left(\frac{3}{2}\frac{\omega_{1}}{\omega_{2}}a\right), \tilde{g}\left(\frac{15}{4}a\right) \right\} 
\geq c a^{2} \quad \text{on } {\rm supp} \phi_{D},
\end{gather*}
where $c$ is a positive constant independent of $a$.
Choosing $u=\phi_D$ in the definition of $\tilde{\kappa}$ and letting $a \gg 1$, we deduce that for any $\lambda \in [\zeta_{2},\zeta_{3}]$,
\begin{gather*}
\tilde{\kappa}(\lambda)
\leq \int_\Omega \{|\nabla\phi_D|^2 - \tilde g(\lambda |\nabla H|)\ \phi_D^2\} \, dx  
\leq  \int_\Omega |\nabla\phi_D|^2 dx - ca^{2}<0.
\end{gather*}
It also follows from $\tilde{g}'(s)<0$ for $s\in (2a,\infty)$ that for $\lambda \in [\zeta_{3},\infty)$,
\begin{gather*}
\tilde{\kappa}'(\lambda)=-\int_{\Omega} |\nabla H| \tilde{g}'(\lambda |\nabla H|)\tilde{\varphi}_{1}^{2} dx>0.
\end{gather*}
We set $\zeta_{4}:=5\omega_{1}^{-1}a$ and then see from $\tilde{g}(s)<0$ for $s \in (4a,\infty)$ that for $\lambda \in [\zeta_{4},\infty)$,
\begin{gather*}
\tilde{\kappa}(\lambda) \geq \sigma_{1} >0.
\end{gather*}
Thus we can find the desired numbers $\zeta_{j}$ in the case $a \gg 1$.
The proof is complete.  
\end{proof}

\subsection{Stationary solutions}
Assume now that the sparking voltage $\lambda^*>0$ exists.  
Regarding the voltage $\lambda$ on the cathode $\cC$ as a bifurcation parameter, 
we can expect 
that a one-parameter family of non-trivial stationary solutions may arise from 
the point $(\lambda,\rho_{i},\rho_{e},\Phi)=(\lambda^{*},0,0,\lambda^{*}H)$.
The result on  local bifurcation is summarized in Theorem \ref{thm3}. 
The main difficulty is that, although bifurcation theory is usually applicable only to elliptic systems, our system is of mixed hyperbolic-elliptic type.

\begin{thm}[Local Bifurcation]\lb{thm3}
Let the positive number $\lambda^*$ be defined by \er{bp1}. 
Then there exists a small number $\eta>0$  
such that for each $|s| < \eta$,
there exists $(\Lambda(s),S_{i }(s), S_{e}(s), W(s) ) \in Z$, 
where $Z:=\real\times C^{1,\alpha}(\overline{\Omega}) \times C^{2,\alpha}_{0}(\overline{\Omega}) \times C^{3,\alpha}_{0}(\overline{\Omega})$.    It is uniformly bounded in $s$ with values in $Z$.  
For each $s$ it provides a stationary solution   
\[  
(\rho_i,\rho_e,\Phi)(s)=(s\varphi_i^{*} +s^{2}S_{i}(s) ,s\varphi_e^{*} +s^{2}S_{e}(s), 
s\varphi_{v}^{*}+s^{2}W(s)+(\lambda^{*}+s\Lambda(s)) H),
\]
where $\varphi_i^{*} \in C^{1,\alpha}(\overline{\Omega})$ 
is positive in $\Omega$,   $\varphi_i^{*} \big|_\cA =0 $,   
$\varphi_e^{*} \in C^{2,\alpha}_{0}(\overline{\Omega})$ is positive in $\Omega$, and $\varphi_v^{*} \in C^{3,\alpha}_{0}(\overline{\Omega})$.  
\end{thm}

On the other hand, in the radial case we can deduce {\it global bifurcation}.  
By the radial case we mean that $\Omega$ is either an annulus $\{(x,y) \in \mathbb R^{2}  ;\   r_{1}^{2}<r^{2}:=x^{2} + y^{2}<r_{2}^{2} \}$ or a spherical shell $\{(x,y,z) \in \mathbb R^{3}  ;\   r_{1}^{2}<r^{2}:=x^{2} + y^{2} + z^{2}<r_{2}^{2} \}$.  
The advantage of the radial case is that the system has solutions that depend only on the radial coordinate $r$.  When dealing with such solutions, the transport operator, which involves $\nabla \rho_i$, depends only on the single derivative $\partial_r\rho_i$ and not on $\partial_\theta\rho_i$, so that it is effectively an elliptic operator.  
Ellipticity is required for compactness in the global bifurcation proof. 
It remains an open problem to prove global bifurcation if $\Omega$ is not radial.

\begin{thm}[Global Bifurcation]\lb{thm4}
Let $\lambda^*>0$ satisfy \er{bp1} and let $\Omega$ be either an annulus ($d=2$) or a spherical shell ($d=3$).  Then there exists a unique continuous one-parameter family $\sK$ (that is, a curve) 
of radial stationary solutions $(\rho_{i},\rho_{e},\Phi)$ of the problem \eqref{Mmodel}.
Both densities are positive, $\rho_i\in C^1_{r}, \ \rho_e\in C^2_{r}, \ \Phi\in C^3_{r}$, 
the curve begins at the trivial stationary solution $(\rho_{i},\rho_{e},\Phi)=(0,0,\lambda^{*}H)$ with voltage $\lambda^{*}$ 
and it  ``ends" with one of the following two alternatives: 

Either (i) the density $\rho_i + \rho_e$ becomes unbounded along $\sK$, 

Or (ii) the curve ends at a different trivial stationary solution with some voltage $\lambda^{\#} \neq \lambda^{*}$.
\end{thm}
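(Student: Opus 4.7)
The plan is to derive Theorem \ref{thm4} from Rabinowitz's global bifurcation theorem, sharpened by a unilateral argument that keeps the continuum in the positive cone. Radial symmetry plays two essential roles here: the ion continuity equation $\nabla\cdot(\rho_i k_i\nabla\Phi)=k_e h(|\nabla\Phi|)\rho_e$ reduces, on radial functions, to a scalar first-order ODE in $r$ whose characteristic speed $k_i|\partial_r\Phi|$ is bounded below by \eqref{Hasp1}, so $\rho_i$ can be recovered from $(\rho_e,\Phi,\lambda)$ as an explicit integral along characteristics with the anode inflow condition $\rho_i|_{\cA}=0$; and the Poisson and electron equations become elliptic in the single radial variable, which is what provides the compactness needed for abstract bifurcation.

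Step 1 is the operator formulation. Writing $\Phi=V+\lambda H$ and taking $Y:=C^{2,\alpha}_{0,r}(\overline{\Omega})\times C^{3,\alpha}_{0,r}(\overline{\Omega})$, I build a $C^1$ compact perturbation of the identity $\Psi\colon\mathbb R\times Y\to Y$ whose zeros are exactly the radial stationary solutions: given $(\rho_e,V,\lambda)$, compute $\rho_i$ from the radial ion ODE, then invert the Poisson and electron elliptic operators via their Schauder Green's functions; compactness on $Y$ follows from the compact embeddings of H\"older spaces. Step 2 applies local and global bifurcation. The Fr\'echet derivative $D_Y\Psi(\lambda^*,0)$ decouples and has a one-dimensional kernel spanned by the positive principal eigenfunction $\varphi_1$ of $A$ at $\kappa(\lambda^*)=0$; the transversality condition $\kappa'(\lambda^*)<0$ is built into \eqref{bp1}. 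Crandall--Rabinowitz reproduces the local curve of Theorem \ref{thm3}, while Rabinowitz's global theorem produces a connected continuum $\sK$ containing $(\lambda^*,0)$ such that either $\sK$ is unbounded in $\mathbb R\times Y$ or $\sK$ returns to the trivial branch at a second zero $(\lambda^\#,0)$ of $\kappa$.

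Step 3 promotes the continuum to a curve and matches the announced dichotomy. Positivity of both densities near $(\lambda^*,0)$ is given by Theorem \ref{thm3}. The strong maximum principle applied to the electron equation, combined with the one-sided propagation of the ion ODE (characteristics run from $\cA$ to $\cC$ because $\nabla H\cdot\bm{n}<0$ on $\cA$), shows that positivity survives along $\sK$ except at trivial solutions; Dancer's unilateral refinement then yields a positive sub-continuum inheriting the Rabinowitz dichotomy. At any interior positive point of $\sK$ the linearized electron operator has a simple principal eigenvalue by Krein--Rutman, so the implicit function theorem applies and $\sK$ is locally a $C^1$ arc, yielding global curve structure after parametrization by, e.g., $s\mapsto\langle\rho_e,\varphi_1\rangle$. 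Alternative (i) then reduces to $\|\rho_i+\rho_e\|_\infty\to\infty$: $\lambda$ is a priori bounded on positive radial solutions by testing the electron equation against $\varphi_1$, and unboundedness in higher H\"older norms follows from unboundedness in $L^\infty$ via elliptic regularity and the compactness of the inverse operators.

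The main obstacle is Step 3: verifying that the linearized operator remains nondegenerate along $\sK$ between trivial crossings, that unilateral positivity is genuinely preserved by the hyperbolic-parabolic-elliptic coupling, and that the Fredholm/compactness bookkeeping goes through despite the ion equation being only first order. The radial reduction is what permits integration of the ion equation along characteristics, giving both the needed compactness of the solution operator in Step 1 and a sign-definite Jacobian for positivity preservation in Step 3; without it the Fredholm structure would collapse, which is precisely why the theorem is stated only in the radial setting.
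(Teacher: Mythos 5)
Your overall plan — reduce the ion equation via characteristics, apply global bifurcation to a Fredholm operator, then exploit positivity to obtain the dichotomy — is in the same spirit as the paper's Section~5, but several of the key claims are incorrect as stated, and two of them are precisely the obstructions that the paper's proof has to work hardest to overcome.

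First, your claim that the characteristic speed $k_i|\partial_r\Phi|$ is ``bounded below by \eqref{Hasp1}'' is only valid at the trivial solution $V=0$. Along the nonlinear branch $\Phi=V+\lambda H$, nothing in \eqref{Hasp1} prevents $\partial_r(V+\lambda H)$ from degenerating as $s\to\infty$, and if it does, the ion solution operator (the integral \eqref{R_i1}) ceases to be bounded and your compact map $\Psi$ is not even defined. That is exactly why the paper builds the open set $\sO$ with the explicit constraint $|\nabla(V+\lambda H)|>0$ (and the exhaustion by $\sO_j$ with $|\nabla(V+\lambda H)|\geq \tfrac1j$), and why Lemma~\ref{lem(b)} — arguably the most delicate estimate in the whole argument — is devoted to proving that this degeneracy cannot occur without the densities themselves blowing up. Your proposal simply does not address this alternative, so the dichotomy you obtain from Rabinowitz would have an additional, unresolved escape route.

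Second, you assert that at each interior positive point of $\sK$ the implicit function theorem applies because the linearized electron operator has a simple principal eigenvalue by Krein--Rutman, and hence $\sK$ is locally a $C^1$ arc after reparametrization. This does not follow: a simple principal eigenvalue of one block of the full linearized operator $\sL^0=\partial_{(\rho_i,R_e,V)}\sF$ says nothing about the invertibility of $\sL^0$ itself, and fold/turning points along the branch are entirely possible. A degree-theoretic continuum need not be a curve. The paper sidesteps this by using the \emph{analytic} global bifurcation theorem of Constantin--Strauss--Varvaruca (\cite{CS1}, stated as Theorem~\ref{Global0}), for which $\sF$ must be real-analytic, and which delivers a genuine curve with local analytic reparametrizations (property (C4)) without any nondegeneracy assumption away from the bifurcation point. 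If you insist on the degree-theoretic route, you need a separate argument for curve structure, and the one you propose is not sound.

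Third, the claim that ``$\lambda$ is a priori bounded on positive radial solutions'' is false and is not what the paper proves. Lemma~\ref{lem(c)} shows only that \emph{if} $\lambda(s)\to\infty$ while $\|V(s)\|_{C^2}$ stays bounded, then $R_e\equiv0$ — a contradiction with positivity. There is no a~priori bound on $\lambda$: it is perfectly possible for $\lambda(s)\to\infty$ along $\sK$ provided the densities blow up too, which is folded into alternative~(i) via Lemma~\ref{lem(d)}. Presenting it as an a~priori bound inverts the logic and would leave a genuine gap in reducing the five Rabinowitz alternatives to the two of Theorem~\ref{thm4}.

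Finally, your positivity argument is too coarse. The paper's Proposition~\ref{Global2} handles the case where $R_e$ first vanishes at an \emph{interior} radius $r_0$: by the uniqueness theorem for the radial second-order ODE $\sF_2=0$ with $R_e(r_0)=\partial_r R_e(r_0)=0$ one concludes $R_e\equiv0$, and then $\rho_i\equiv0$, $V\equiv0$. The strong maximum principle alone does not give you that conclusion, and ``Dancer's unilateral refinement'' is a statement about continua, not a mechanism for propagating pointwise positivity through a coupled hyperbolic--parabolic--elliptic system. You also need the loop alternative (C5)(e) of Theorem~\ref{Global0} to be excluded, which in the paper follows from the sign of the densities near $s^\#$ — something your outline does not derive.

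In short: the proposal captures the correct high-level architecture (operator reduction, bifurcation, positivity), but the compactness/degeneracy issue at $\partial_r\Phi=0$, the curve-versus-continuum distinction, the $\lambda\to\infty$ alternative, and the interior-vanishing case for $R_e$ are all real gaps, and each of them corresponds to a substantial lemma in the paper's proof.
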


\noindent
Typical bifurcation diagrams are drawn in Figures \ref{fig2} and \ref{fig3}. 
More details are given in Theorem \ref{mainthm}.

\begin{minipage}{7.5cm}
\begin{figure}[H]
\begin{center}
    \includegraphics[width=7cm, bb=0 0 880 543]{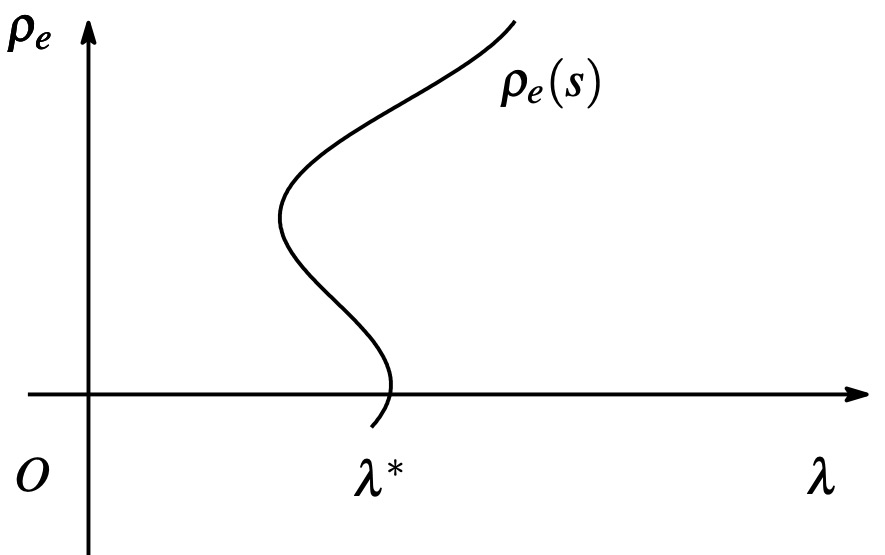}
  \caption{alternative (i)}
  \label{fig2} 
\end{center}
\end{figure}
\end{minipage}
\begin{minipage}{7.5cm}
\vspace{5mm}
\begin{figure}[H]
\begin{center}
    \includegraphics[width=7cm, bb=0 0 863 466]{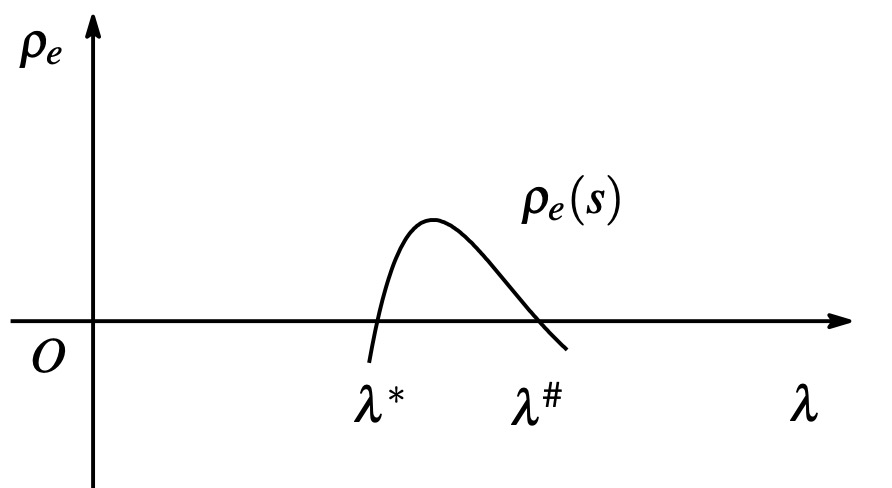}
  \caption{alternative (ii)}
  \label{fig3} 
\end{center}
\end{figure}
\end{minipage}

This paper is organized as follows.  
In Section 3 we first make a simple change of variables and some preliminary estimates.  
The assumption \eqref{Hasp1} on $\Omega$ plays a key role.  
Then we use the energy method to prove Theorem \ref{thm1} (stability).  
Next we prove Theorem \ref{thm2} with the stability index being negative, 
by making use of the lowest eigenfunction of $A$, which leads to a growing mode.  
For the case of no electrons, we prove by means of the energy method that eventually there are also no ions either (Proposition \ref{pro1}).   

In Section 4 we prove the existence of a local bifurcation curve $\sK$ of stationary solutions.  
The standard proof fails because solutions of the hyperbolic equation for the ion density do not gain derivatives.  
In Section 5 we prove that $\sK$ extends to a global curve.   
It is required to prove Fredholm and compactness conditions.  
Because the equation for the ion density is not elliptic, the proof of these conditions is restricted to the radial case, that is, a circular cylinder or a spherical shell. 
Finally we make explicit use of the positivity of the densities in order to show that either $\sK$ 
goes from the trivial solution at the sparking voltage $\lambda^*$ to that at another voltage $\lambda^\#$ or else the densities become unbounded along $\sK$.


\section{Stability analysis of the trivial solution}\lb{S3}
This section provides the stability analysis of the trivial solution $(\rho_{i},\rho_{e},\Phi)=(0,0,\lambda H)$.
It is convenient to rewrite the initial--boundary value problem \er{Mmodel} 
in terms of the 
modified functions  
\begin{gather}\label{newf0}
R_i:=\ro_i e^{- pH}, \qu 
R_e:=\ro_e e^{\frac{\lambda}{2}H}, \qu 
V=\Phi-\lambda H,
\end{gather}
where $p > 0$ is a fixed constant to be determined in the proof of Lemma \ref{apriori1}.
We recall that $h(s):=ase^{-{b}/{s}}$ and $g(s):=h(s)-{s^{2}}/{4}$.
As a result, we can rewrite the original problem as
\begin{subequations}\lb{r0}
\begin{gather}
\partial_tR_i
+k_i \nabla (V+\lambda H ) \cdot \nabla R_i 
+ p k_i \lambda |\nabla H|^{2} R_i
={k_e}h(|\lambda \nabla H| )
e^{{-pH}-\frac{\lambda}{2}H}R_e
+k_if_i[R_i,R_e,V], 
\lb{re1} \\
\partial_tR_e-k_e\Delta R_e -k_eg(|\lambda \nabla H|)R_e=k_ef_e[R_e,V],
\lb{re2} \\
\Delta V =e^{pH}R_i-e^{-\frac{\lambda}{2}H}R_e
\lb{re3}
\end{gather}
with the initial and boundary conditions 
\begin{gather}
(R_i,R_e)(0,x)=(R_{i0},R_{e0})(x),  \qu 
 R_{i0}(x) \geq 0, \ R_{e0}(x) \geq 0, 
\lb{ri1} \\
  R_{i}(t,x)=R_e(t,x)=V(t,x)=0, \quad  t>0, \ x \in \cA,
\lb{rba} \\
  R_e(t,x)=V(t,x)=0, \quad  t>0, \ x \in \cC,
\lb{rbc}
\end{gather}
\end{subequations}
where the nonlinear terms $f_i$ and $f_e$ are defined as
\begin{align*}
f_i[R_i,R_e,V]:=&- (\Delta V+p \nabla V \cdot \nabla H) R_i
-\frac{k_e}{k_i}\left\{ h(|\lambda \nabla H|)- h(|\nabla V + \lambda \nabla H|) \right\} e^{-pH-\frac{\lambda}{2}H}R_e,
\\
f_e[R_e,V]:=&\nabla V \cdot \nabla R_e
-\frac{\lambda}{2} R_e\nabla V \cdot \nabla H+R_e\Delta V
-\left\{ h(|\lambda \nabla H|)- h(|\nabla V + \lambda \nabla H|) \right\} R_e.
\end{align*}
The compatibility condition \eqref{com} can be rewritten as follows:
\begin{subequations}\label{com2}
\begin{gather}
R_{i0}(x)=R_{e0}(x)= (\nabla V(0,x)+\lambda \nabla H (x)) \cdot \nabla R_{i0}(x)=0 , \quad x \in \cA,
\\
R_{e0}(x)=0,  \quad x \in \cC,
 \\ 
R_{i0}\ge0, \ R_{e0}\ge0, \quad x\in \Omega.  
\end{gather}
\end{subequations}
The {\it trivial stationary solution} is  
\[
(R_i,R_e,V)=(0,0,0).
\]

The advantage of using the new unknown functions $R_i$ and $R_e$
lies in the following two facts.
The first one is that the rewritten hyperbolic equation has 
the dissipative term $pk_i \lambda |\nabla H|^{2} R_i$ thanks to the assumption \eqref{Hasp1},
even though the original hyperbolic equation does not have 
any dissipative structure.
Secondly, the linear part of the rewritten parabolic equation
is self-adjoint in $H^{1}_{0}(\Omega)$.
These two facts play important roles in the proofs 
of both the nonlinear stability and instability of the trivial stationary solution.  
Note that the equations are linear in $(R_i,R_e)$ for each $V$.

Now we make a series of straightforward observations. 
For notational convenience, we define 
\[
 N(T):=\sup_{0 \leq t \leq T} \|\cR(t)\|_X = \sup_{0 \leq t \leq T}(\|R_i(t)\|_2+\|R_e(t)\|_2).
\] 
By elliptic theory from \er{re3}, $V$ is easily estimated as 
\begin{gather}
 \|V(t)\|_{2+k} \lesssim_{p} \|R_i(t)\|_k+\|R_e(t)\|_k
\quad \text{for} \ \ t\geq 0, \ \  k=0,1,2,
\lb{ell1}\\
 \|\partial_{t} V(t)\|_{2} \lesssim_{p} \|\partial_{t} R_i(t)\|+\|\partial_{t} R_e(t)\|
\quad \text{for} \ \ t\geq 0.
\lb{ell2}
\end{gather}
The notation $\lesssim_p$ means that the constant after $\le$ may depend on $p$, 
but not on $T, V, R_e, R_i,$ etc.  The constant $p$ was introduced in \eqref{newf0}. 
Therefore, assuming that $N(T)$ is sufficiently small, and 
 due to \eqref{H0}, \eqref{Hasp1}, and \eqref{ell1} and the boundedness of $\Omega$, 
there is a constant $c_{1}>0$ depending on $\lambda$ such that
\begin{align}
(\nabla V + \lambda \nabla H) \cdot \bm{n} \leq -c_{1} \quad &\text{on $\cA$}, 
\label{chara2}\\
(\nabla V + \lambda \nabla H) \cdot \bm{n} \geq c_{1} \quad &\text{on $\cC$},
\label{chara1}\\
|\nabla H| \geq c_{1}  \quad &\text{in $\Omega$}.  
\label{Hasp2}
\end{align}
Assuming $N(T)<1$, 
we can pointwise estimate both nonlinear terms $f_i$ and $f_e$ in \er{r0} as 
\begin{gather}
|f_i[R_i,R_e,V]|  \lesssim_{p} (|\nabla V| + |\Delta V| ) (| R_i|+|R_e|) \lesssim_{p} N(T) (| R_i|+|R_e|),
\lb{NL1} \\
|f_e[R_e,V]| \lesssim (|\nabla V| + |\Delta V| )(|R_e|+|\nabla R_e|) \lesssim_{p} N(T)(|R_e|+|\nabla R_e|).
\lb{NL2}  
\end{gather} 
The derivatives of $f_i$ and $f_e$ are also estimated as
\begin{align}
 |\partial_{t}f_i[R_{i},R_e,V]| 
&\lesssim_{p}  N(T)(|\partial_{t} R_i| + |\partial_{t} R_e| + |\partial_{t}\nabla V| + |\partial_{t}\nabla^{2} V|),
\lb{NL4} \\
|\nabla f_i[R_{i},R_e,V]| 
&\lesssim_{p}  N(T)(|\nabla R_i| + |\nabla R_e| + |\nabla^{2} V| + |\nabla^{3} V|),
\lb{NL6} \\
 |\nabla^{2} f_i[R_{i},R_e,V]| 
& \lesssim_{p} |\nabla^{3} V| |\nabla R_{i}|+N(T) \sum_{j=1}^{2} (|\nabla^{j} R_i| + |\nabla^{j} R_e| + |\nabla^{j} V| + |\nabla^{j+1} V|),
\lb{NL5} \\
 |\partial_{t}f_e[R_e,V]| 
&\lesssim_{p}  |\partial_{t}\nabla V||\nabla R_e| 
+ N(T)(|\partial_{t} R_e|+| \partial_{t} \nabla R_e| + |\partial_{t}\nabla V| + |\partial_{t}\nabla^{2} V|).
\lb{NL3} 
\end{align} 
Using the estimates \eqref{NL1} and \eqref{NL2} together with \eqref{re1}, \eqref{re2}, \eqref{ell1}, and \eqref{ell2}, 
we also see for $N(T)<1$ that
\begin{gather}\label{NT1}
\|\partial_{t} R_i(t)\|+\|\partial_{t} R_e(t)\| +  \|\partial_{t} V(t)\|_{2}  \lesssim_{p} N(T).
\end{gather}

\subsection{Nonlinear stability for $\kappa>0$}\lb{S3.1}

Our goal in this subsection is to prove Theorem \ref{thm1}.  
In particular, the smallness assumption on the initial data is required 
to determine the sign of the characteristics of the hyperbolic equation \er{re1} as \er{chara1} and \er{chara2}.  
This easily leads to the local-in-time solvability of problem \er{r0}.  

\begin{lem}\lb{local-time}
For any $\lambda>0$, there exist an amplitude $\ve>0$ and a time $T>0$ such that
if the initial data $(R_{i0}, R_{e0}) \in H^2(\Omega) \times H^2(\Omega)$ satisfy
$\|R_{i0}\|_{2}+\|R_{e0}\|_{2} < \ve$ and the compatibility condition \eqref{com2}, 
then the problem \er{r0} has a unique solution $(R_i,R_e,V)$ that satisfies
\begin{subequations}\lb{reg2}
\begin{align}
& R_i\in C([0,T];H^2(\Omega))\cap C^1([0,T];H^1(\Omega)), \quad R_i\geq0,
\\
& R_e\in C([0,T];H^{2}(\Omega) )\cap L^2(0,T;H^3(\Omega)) \cap H^1(0,T;H^{1}(\Omega)), \quad R_e\geq0,
\\
& V \in C([0,T];H^{4}(\Omega) ).
\end{align}
\end{subequations}
\end{lem}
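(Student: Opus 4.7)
The plan is to construct the solution by a Picard-type iteration that decouples the three equations of \eqref{r0} at each step, and to close the iteration using the energy estimates made possible by the smallness of $N(T)$ together with the structure inherited from assumption \eqref{Hasp1}. Starting from $(R_i^0,R_e^0,V^0)\equiv(R_{i0},R_{e0},0)$, I would define $(R_i^{n+1},R_e^{n+1},V^{n+1})$ by solving in succession: (a) the elliptic problem \eqref{re3} for $V^{n+1}$ with right-hand side built from $(R_i^n,R_e^n)$ and zero Dirichlet data, controlled by \eqref{ell1}; (b) the linear parabolic equation \eqref{re2} for $R_e^{n+1}$ with source $k_ef_e[R_e^n,V^{n+1}]$ and zero Dirichlet data on all of $\partial\Omega$, to which standard linear parabolic theory applies since $\partial_t-k_e\Delta$ is uniformly parabolic and $k_eg(|\lambda\nabla H|)$ is a bounded zeroth-order perturbation; (c) the linear transport equation \eqref{re1} for $R_i^{n+1}$ with velocity field $k_i(\nabla V^{n+1}+\lambda\nabla H)$, damping coefficient $pk_i\lambda|\nabla H|^2$, source built from $R_e^{n+1},V^{n+1},R_i^n$, and the inflow condition $R_i^{n+1}=0$ on $\cA$.

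Each subproblem is solvable. The smallness $\|R_{i0}\|_2+\|R_{e0}\|_2<\ve$ together with a short time horizon keeps $N(T)$ small enough that the characteristic signs \eqref{chara1}--\eqref{chara2} continue to hold along the iteration, so $\cA$ is strictly inflow and $\cC$ is strictly outflow; accordingly step (c) is well-posed with a boundary condition only on $\cA$. To close the iteration I would carry out uniform $H^2$ estimates. For $R_e^{n+1}$ standard parabolic energy estimates provide $C_tH^2\cap L^2_tH^3\cap H^1_tH^1$ bounds. For $R_i^{n+1}$ I would multiply \eqref{re1} successively by $R_i^{n+1}$ and its spatial derivatives up to order two and integrate: the damping term is uniformly positive by \eqref{Hasp2}, the outflow boundary integral on $\cC$ has the favorable sign thanks to \eqref{chara1}, the inflow boundary integral on $\cA$ vanishes by \eqref{chara2} together with the boundary condition, and the nonlinear source is absorbed via \eqref{NL1}, \eqref{NL6}, \eqref{NL5}. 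Choosing $p$ large enough so that the damping dominates the lower-order drift terms $\nabla(V+\lambda H)\cdot\nabla R_i$ after integration by parts, then taking $T$ and $\ve$ small, these estimates close uniformly in $n$.

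Convergence would then follow by showing the sequence is Cauchy in the weaker norm $L^\infty_tL^2\times L^\infty_tL^2$, which is the standard device for coping with the loss of derivative in a hyperbolic equation; the uniform $H^2$ bounds combined with weak-$\ast$ compactness and interpolation then upgrade this to the regularity \eqref{reg2}. Positivity passes to the limit: for $R_e$ it follows from the maximum principle for its linear parabolic equation, and for $R_i$ from the method of characteristics, since once $R_e\ge 0$ the source $\frac{k_e}{k_i}h(|\lambda\nabla H|)e^{-pH-\lambda H/2}R_e$ in \eqref{re1} is nonnegative and the inflow datum is zero. Uniqueness is a direct energy estimate in the same low-norm space.

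The \textbf{main obstacle} is the hyperbolic equation \eqref{re1}. One must simultaneously preserve the characteristic sign conditions \eqref{chara1}--\eqref{chara2} along the iteration, which is what fixes the smallness of $\ve$ and $T$, and also close an $H^2$ estimate for $R_i$ despite the fact that second derivatives produce boundary terms on $\cA$. These boundary terms are precisely where the second-order part of the compatibility condition in \eqref{com2}, namely $(\nabla V(0,x)+\lambda\nabla H)\cdot\nabla R_{i0}=0$ on $\cA$, is needed, and their control ultimately relies on the damping $pk_i\lambda|\nabla H|^2$ supplied by the geometric assumption \eqref{Hasp1}.
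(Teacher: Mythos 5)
The paper does not actually present a proof of Lemma~\ref{local-time}; it simply cites Theorem 2.2 and Remark 2.3 of \cite{ST1}, so a line-by-line comparison is impossible. Your plan --- a decoupled Picard iteration (elliptic for $V$, parabolic for $R_e$, transport for $R_i$), closed by uniform $H^2$ energy bounds under the smallness that preserves the characteristic signs \eqref{chara1}--\eqref{chara2}, convergence in $L^\infty_t L^2$, and positivity by maximum principle/characteristics --- is the standard construction for a hyperbolic--parabolic--elliptic system of this type and is the natural reading of what \cite{ST1} does, so there is no methodological mismatch.

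One point in your sketch is imprecise and worth fixing. You assert that ``the inflow boundary integral on $\cA$ vanishes by \eqref{chara2} together with the boundary condition.'' That is true for the $L^2$ estimate of $R_i$ (since $R_i=0$ on $\cA$), but it is \emph{not} true for the $H^1$ or $H^2$ estimates: after applying $\partial^\alpha$ with $|\alpha|\ge 1$, the quantity $\partial^\alpha R_i$ need not vanish on $\cA$, and the boundary integral $\int_{\cA} |\partial^\alpha R_i|^2\,\nabla(V+\lambda H)\cdot\bm n\,dS$ does not drop out for free. In the paper's a priori estimate (proof of Lemma~\ref{apriori1}), this is handled by flattening $\cA$ in local coordinates, deriving $\partial_{y_d}\hat R_i=0$ on the flattened boundary from the equation itself together with $\hat R_i=\hat R_e=0$ there, computing the boundary datum $\mathcal B$ for $\partial_{y_d}^2\hat R_i$, and controlling it via the trace theorem as in \eqref{bb1}. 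Your closing paragraph correctly identifies that this is where the second-order compatibility condition in \eqref{com2} enters, so the understanding is sound; but the earlier claim of ``vanishing'' should be replaced by ``requires a local-coordinate reduction to derive the normal-derivative data on $\cA$.'' With that correction your proposal is consistent with the intended (delegated) argument.
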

\begin{proof}
We omit the easy proof because it is similar to that of Theorem 2.2  and Remark 2.3 in \cite{ST1}. 
\end{proof}

\begin{lem}\lb{apriori1}
Let $\kappa(\lambda)>0$.
Suppose that $(R_i,R_e,V)$ satisfies \er{reg2}  and is a solution to the problem \er{r0}.
There exist $\de>0$ and $\gamma>0$ such that if $N(T) \leq \de$, then the following 
estimate holds for $t\in[0,T]$:
\begin{equation}\lb{APE1}
e^{\ga t} (\|R_{i}(t)\|_2^2+\|R_{e}(t)\|_2^2)
+\int_0^t e^{\ga \tau} (\|R_{i}(\tau)\|_2^2+\|R_{e}(\tau)\|_2^2) \,d\tau    
\lesssim \|R_{i0}\|_2^{2}+\|R_{e0}\|_2^{2}. 
\end{equation}
Recall that $\|\cdot\|_k$ denotes the norm of $H^k(\Omega)$.
We emphasize that the constant in $\lesssim$ is independent of $T$.
\end{lem}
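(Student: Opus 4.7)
The plan is to prove \eqref{APE1} by a coupled exponential-weighted energy method producing a differential inequality of the schematic form $\tfrac{d}{dt}\cE(t) + 2\gamma\,\cE(t) + \cD(t) \le 0$, where $\cE(t)\sim \|R_i(t)\|_2^2+\|R_e(t)\|_2^2$ and $\cD(t)\gtrsim \|R_i(t)\|_2^2+\|R_e(t)\|_2^2$. Integrating this inequality after multiplying by $e^{2\gamma t}$ then gives both the weighted pointwise bound and the weighted time integral in \eqref{APE1}. The three equations of \eqref{r0} supply coercivity in complementary ways: the parabolic equation \eqref{re2} through $\langle A w, w\rangle\ge \kappa(\lambda)\|w\|^2$ for the self-adjoint operator $A$ in \eqref{operatorA}; the hyperbolic equation \eqref{re1} through the damping term $p k_i \lambda |\nabla H|^2 R_i$, where $|\nabla H|^2\ge c_1^2$ by \eqref{Hasp2} and the constant $p$ from \eqref{newf0} is taken large; and $V$ is controlled algebraically by $(R_i,R_e)$ via \eqref{ell1}.

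At the $L^2$ level, testing \eqref{re2} against $R_e$ gives $\tfrac{d}{dt}\|R_e\|^2+2k_e\langle AR_e,R_e\rangle\le 2k_e|\langle f_e,R_e\rangle|$, producing the $\kappa(\lambda)\|R_e\|^2$ coercive term. Testing \eqref{re1} against $R_i$ and integrating the transport term by parts yields the damping $2p k_i \lambda\int|\nabla H|^2 R_i^2\,dx$; a divergence contribution $-k_i\int(\Delta V+p\,\nabla V\cdot\nabla H)R_i^2\,dx$ of order $O(N(T)\|R_i\|^2)$ by \eqref{ell1}; a boundary term $k_i\int_{\partial\Omega}(\nabla V+\lambda\nabla H)\cdot\bm{n}\,R_i^2\,dS$ which vanishes on $\cA$ by \eqref{rba} and has the favorable sign on $\cC$ by \eqref{chara1}; and a cross term involving $R_e$ absorbed by Young's inequality as $\tfrac12 p k_i \lambda c_1^2\|R_i\|^2+C_{p,\lambda}\|R_e\|^2$. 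A sufficiently large multiple of the $R_e$ estimate then swallows $C_{p,\lambda}\|R_e\|^2$, yielding coupled $L^2$ decay.

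For the $H^1$ and $H^2$ components I apply $\partial^\alpha$ for $1\le|\alpha|\le 2$. The $R_e$ estimates follow by standard parabolic regularity: test with $(-\Delta)^{|\alpha|/2}\partial^\alpha R_e$ or, equivalently, differentiate in time and invoke elliptic $H^2$ regularity for the Dirichlet Laplacian, exploiting that $R_e$ and $\partial_t R_e$ vanish on $\partial\Omega$. For $R_i$ I commute $\partial^\alpha$ with the transport operator; the commutator $[\partial^\alpha,\,k_i(\nabla V+\lambda\nabla H)\cdot\nabla]R_i$ generates derivatives of $V$ up to order $|\alpha|+1\le 3$, controlled by $\|V\|_3\lesssim \|R_i\|_1+\|R_e\|_1\le N(T)$ via \eqref{ell1} together with the pointwise bounds \eqref{NL6}, \eqref{NL5} on $\nabla^j f_i$. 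The damping $p k_i\lambda\int|\nabla H|^2|\partial^\alpha R_i|^2\,dx$ is preserved, and the boundary contributions retain the favorable sign via \eqref{chara1}, \eqref{chara2}, with the compatibility conditions \eqref{com2} guaranteeing consistency of the traces at $t=0$.

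The principal obstacle is the top-order commutator appearing in the $H^2$ identity for $R_i$, namely the term proportional to $\int \nabla^3 V\cdot\nabla R_i\,\partial^2 R_i\,dx$ arising from \eqref{NL5}. Here $\|V\|_3\lesssim \|R_i\|_1+\|R_e\|_1\le N(T)$ by \eqref{ell1}, and the product $\nabla^3 V\cdot\nabla R_i$ is bounded in $L^2$ by a H\"older pairing $\|\nabla^3 V\|_{L^6}\|\nabla R_i\|_{L^3}$ using $\|V\|_4\lesssim \|R_i\|_2+\|R_e\|_2\lesssim N(T)$ (valid for $d\le 3$ via the Sobolev embedding $H^1\hookrightarrow L^6$), so the whole contribution is $O(N(T)\|R_i\|_2^2)$ and is absorbed once $N(T)\le\delta$ is small. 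Summing the weighted $L^2$, $H^1$, and $H^2$ identities with a sufficiently large coefficient on the $R_e$ part, and choosing $p$ large, $\delta$ small, and $\gamma\in\bigl(0,\min(k_e\kappa(\lambda),\,p k_i\lambda c_1^2)\bigr)$ small, yields the required differential inequality and hence \eqref{APE1}.
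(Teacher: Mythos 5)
Your $L^2$ and time-derivative steps track the paper's strategy faithfully: weight by $e^{\gamma t}$, use $\kappa(\lambda)>0$ for coercivity of the parabolic part, take $p$ large in \eqref{newf0} to manufacture damping for the hyperbolic part, and recover $\|R_e\|_2$ from $\|\partial_t R_e\|+\|R_e\|_1$ by treating \eqref{re2} as an elliptic problem. The genuine gap is in the estimate of $\nabla^k R_i$, $k=1,2$, where you assert that ``the boundary contributions retain the favorable sign via \eqref{chara1}, \eqref{chara2}, with the compatibility conditions \eqref{com2} guaranteeing consistency of the traces at $t=0$.'' This step fails. When you apply $\partial^\alpha$ to \eqref{re1} and test against $2e^{\gamma t}\partial^\alpha R_i$, integration by parts of $k_i\nabla(V+\lambda H)\cdot\nabla|\partial^\alpha R_i|^2$ produces the boundary term $k_i\int_{\partial\Omega}\nabla(V+\lambda H)\cdot\bm n\,|\partial^\alpha R_i|^2\,dS$. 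On $\cC$ the sign in \eqref{chara1} is indeed favorable and one may drop that piece, but on the anode $\cA$ the sign in \eqref{chara2} is $<0$, i.e.\ unfavorable, and unlike the $k=0$ case $\partial^\alpha R_i$ does not vanish on $\cA$ for a general multi-index $\alpha$ with $|\alpha|\ge 1$. Invoking \eqref{com2} does not help: that condition only constrains the traces at $t=0$, whereas what must be controlled is the trace of $\partial^\alpha R_i$ on $\cA$ for every $t>0$.

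The paper handles exactly this obstruction by a localization near $\cA$: it covers $\cA$ by small balls $V_j$, flattens the boundary by maps $\bm\Phi_j$, and observes that in the flattened coordinates the boundary values $\partial^\alpha\hat R_i(t,y_1,y_2,0)$ are either $0$ (any $\alpha$ containing a tangential derivative, and also $\alpha=(0,0,1)$, the latter being extracted from the equation \eqref{yre1} together with $\hat R_e=0$ on the boundary) or equal to the explicit quantity $\cal B$ in \eqref{hrba} when $\alpha=(0,0,2)$, which is then controlled by $\|R_e\|_2$ via the trace inequality \eqref{bb1}. Only with this information in hand does the resulting boundary integral become benign and get absorbed into $C_p\int e^{\gamma\tau}\|R_e\|_2^2$. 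Your proposal omits this mechanism entirely, instead identifying the commutator term $\int\nabla^3 V\cdot\nabla R_i\,\partial^2 R_i$ as the principal obstacle; that term is indeed present and your Sobolev treatment of it is fine, but it is not where the real difficulty lies. A secondary point: the paper fixes $\gamma$ \emph{before} choosing $p$, and the $\nabla^k R_i$ estimate produces a constant $\tilde C$ \emph{independent of $p$} multiplying $\int e^{\gamma\tau}\|\nabla^{|\alpha|}R_i\|^2$, which is then absorbed by taking $p$ large. Your single sweep ``choose $p$ large, $\delta$ small, $\gamma$ small'' glosses over this order of quantifiers; it should be spelled out to ensure the absorption is legitimate.
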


\begin{proof}[Proof of Theorem \ref{thm1}, assuming Lemma \ref{apriori1}]
The global-in-time solution $(R_i,R_e,\!V)$ with \er{reg2} 
is constructed by a standard continuation argument 
using the time local solvability established in Lemma \ref{local-time} 
and the a priori estimate in Lemma \ref{apriori1}. 
Once the global solution is constructed, 
it is obvious that the resulting global solution
satisfies the estimate \eqref{APE1} for all $t\in[0,\infty)$. 
We conclude that the global solution decays
exponentially fast in $X$ as $t$ goes to infinity.  
Of course, it follows from \eqref{ell1} that $\|V(t)\|_4$ also decays exponentially. 
These facts together with \eqref{newf0} immediately ensure Theorem 2.1.
\end{proof}

Now we begin the rather long proof of Lemma \ref{apriori1}.

\begin{proof}[Proof of Lemma \ref{apriori1}.]
{\it First we estimate $R_e$ in $L^2$.} 
We multiply \er{re2} by $2e^{\gamma t} R_e$ for some $\gamma \in (0,1)$, 
integrate it by parts over $[0,t]\times \Omega$, 
and use the boundary conditions \er{rba} and \er{rbc} 
as well as the estimate \er{NL2} to obtain
\begin{align}
{}&
e^{\gamma t} \int_\Omega {R}_e^2 \, dx
+2k_e\int_0^t \!\! \int_\Omega e^{\gamma \tau} \left(|\nabla R_e|^2-g(|\lambda \nabla H|){R}_e^2\right)\, dxd\tau
\notag \\
&= \int_{\Omega} {R}_{e0}^2 \, dx
+\gamma\int_0^t \!\! \int_{\Omega} e^{\gamma \tau} R_e^2 \, dxd\tau
+2k_e\int_0^t \!\! \int_{\Omega} e^{\gamma \tau} f_eR_e \, dxd\tau
\notag \\
&\leq \|R_{e0}\|^2
+(\gamma +{C_{p}N(T)})\int_0^t e^{\gamma \tau} \|R_e(\tau)\|^2_1 \, d\tau,
\lb{apes1}
\end{align}
where the constant $C_{p}>0$ depends on $p$.
Because $\kappa(\lambda)>0$,  
there exists a constant $c_{2}>0$ such that
\begin{gather}
\int_{\Omega} |\nabla u|^{2} -g(|\lambda \nabla H|) u^{2} dx \geq c_{2} \| u\|_{1}^{2}  \quad \text{for} \ \ u \in H^1_0(\Omega).
\lb{poin} 
\end{gather}
Applying this inequality \er{poin} to the second term on the left side of \er{apes1} 
and then taking $\gamma>0$ and $N(T)$ sufficiently small, we arrive at
\begin{equation}\lb{apes2}
e^{\gamma t}\|R_e(t)\|^2
+\int_0^t e^{\gamma \tau} \|R_e(\tau)\|_1^2 \,d\tau
\lesssim \|R_{0e}\|^2.
\end{equation}
Here we have taken $\gamma$ independent of $p$.
Next we multiply \er{re2} by $2e^{\gamma t} \partial_{t} R_e$, 
integrate it by parts over $[0,t]\times \Omega$, 
use the boundary conditions \er{rba} and \er{rbc} 
as well as the estimate \er{NL2} and the Schwarz inequality,
and take $N(T)$ small to obtain
\begin{align}
{}&
e^{\gamma t} k_{e} \int_\Omega  |\nabla {R}_e|^2 \, dx
+2 \int_0^t \!\! \int_\Omega e^{\gamma \tau} |\partial_{t} R_e|^2 \, dxd\tau
\notag \\
&= k_{e}\int_{\Omega} |\nabla {R}_{e0}|^2 \, dx
+\gamma k_{e} \int_0^t \!\! \int_{\Omega} e^{\gamma \tau} |\nabla R_e|^2 \, dxd\tau
+2k_e\int_0^t \!\! \int_{\Omega} e^{\gamma \tau}  (g(|\lambda \nabla H|)R_{e} +f_e) \partial_{t} R_e \, dxd\tau
\notag \\
&\leq \int_0^t \!\! \int_\Omega e^{\gamma \tau} |\partial_{t} R_e|^2 \, dxd\tau
+ C \| \nabla R_{e0}\|^2 
+ C\int_0^t e^{\gamma \tau} \|R_e(\tau)\|^2_1 \, d\tau.
\lb{apes7}
\end{align}
From this estimate and \eqref{apes2}, it follows that
\begin{equation}\lb{apes8}
e^{\gamma t}\|\nabla R_e(t)\|^2
+\int_0^t e^{\gamma \tau} \| \partial_{t} R_e(\tau)\|^2 \,d\tau
\lesssim \|R_{0e}\|_{1}^2.
\end{equation}
Furthermore, we apply $\partial_{t}$ to \er{re2}, multiply the resulting equation by $2e^{\gamma t} \partial_{t} R_e$,
and integrate it by parts over $[0,t]\times \Omega$ to obtain
\begin{align}
{}&
e^{\gamma t} \int_\Omega |\partial_{t} {R}_e|^2 \, dx
+2k_e\int_0^t \!\! \int_\Omega e^{\gamma \tau} \left(|\partial_{t} \nabla R_e|^2-g(|\lambda \nabla H|)|\partial_{t} {R}_e|^2\right)\, dxd\tau
\notag \\
&= \int_{\Omega} |\partial_{t} {R}_{e}|^2(0,x) \, dx
+\gamma\int_0^t \!\! \int_{\Omega} e^{\gamma \tau} |\partial_{t} R_e|^2 \, dxd\tau
+2k_e\int_0^t \!\! \int_{\Omega} e^{\gamma \tau} (\partial_{t} f_e) (\partial_{t} R_e) \, dxd\tau
\notag \\
&\leq \|R_{e0}\|_{2}^2
+(\gamma +{C_{p}N(T)})\int_0^t e^{\gamma \tau} (\| \partial_{t} R_e(\tau)\|_{1}^2 + \| \partial_{t} R_i(\tau)\|^2) \, d\tau,
\lb{apes9}
\end{align}
where we have also used \eqref{re2}, \eqref{ell2}, and \eqref{NL3} as well as the Schwarz and Sobolev inequalities.
Then using \eqref{poin} and taking $\gamma$ and $N(T)$ small, we arrive at 
\begin{equation}\lb{apes2t}
e^{\gamma t}\| \partial_{t} R_e(t)\|^2
+\int_0^t e^{\gamma \tau} \| \partial_{t} R_e(\tau)\|_1^2 \,d\tau
\lesssim \|R_{0e}\|_{2}^2 + \int_0^t e^{\gamma \tau} \| \partial_{t} R_i(\tau)\|^2 \, d\tau.
\end{equation}
Hereafter we fix $\gamma>0$ so that \eqref{apes2}, \eqref{apes8}, and \eqref{apes2t} hold.
We note that $\gamma$ is independent of $p$.

{\it Now let us estimate $R_i$.}  Choose $p>0$ so large that $2p\lambda k_i c_{1}^{2} >\gamma$, 
where  $c_{1}$ is the constant in \eqref{Hasp2}. 
We multiply \er{re1} by $2e^{\gamma t} R_i$, 
integrate by parts over $[0,t]\times \Omega$, 
and use the  boundary condition \er{rba} to obtain
\begin{align}
{}&
e^{\gamma t} \int_\Omega R_i^2 \, dx
+\int_0^t \!\! \int_\Omega e^{\gamma \tau} (2p\lambda k_i|\nabla H|^{2} -\gamma)R_i^2 \, dxd\tau
+k_i\int_0^t \!\! \int_{\cC} e^{\gamma \tau} R_{i}^{2}  \nabla (V+\lambda H)\cdot \bm{n} \, dSd\tau 
\notag \\
&= \int_\Omega R_{i0}^2 \, dx
+ k_{i}\int_0^t \!\! \int_\Omega   e^{\gamma \tau} R_{i}^{2} \Delta V  \, dxd\tau
+2\int_0^t \!\! \int_\Omega  
e^{\gamma \tau}\left(
{k_e}h(|\lambda \nabla H| ) e^{-pH-\frac{\lambda}{2}H}R_e
+ k_if_i \right)R_i \, dxd\tau
\notag \\
&\lesssim_{p} \|R_{i0}\|^2
+( \mu +N(T))\int_0^t e^{\gamma \tau} \|R_i(\tau)\|^2 \, d\tau
+\mu^{-1} \int_0^t e^{\gamma \tau} \|R_e(\tau)\|^2 \, d\tau,
\lb{apes5}
\end{align}
where $\mu$ is a positive constant, and we have used \eqref{ell1} and \er{NL1} as well as the Schwarz and Sobolev inequalities.  
From \er{Hasp2} and $2p\lambda k_i c_{1}^{2} >\gamma$, it is seen that
the coefficient in the second term on the left hand side of \er{apes5} is positive, i.e.
\begin{gather}\label{coeff1}
2p\lambda k_i|\nabla H|^{2} -\gamma>0.
\end{gather} 
Owing to \eqref{chara1}, the third term on the left side of \er{apes5} (the integral over $\cC$) is non-negative and thus can be dropped.
Then letting $\mu$ and $N(T)$ be sufficiently small and using \er{apes2}, we arrive at
\begin{equation}\lb{apes6}
e^{\gamma t}\|R_i(t)\|^2 +\int_0^t e^{\gamma \tau} \|R_i(\tau)\|^2 \,d\tau 
\lesssim_{p} \|R_{i0}\|^2+\|R_{e0}\|^2.
\end{equation}
Next we apply the temporal derivative $\partial_{t}$ to \er{re1},
multiply the resulting	 equation by $2e^{\gamma t} \partial_{t} R_i$, 
and integrate by parts over $[0,t]\times \Omega$ to obtain
\begin{align}
{}&
e^{\gamma t} \int_\Omega |\partial_{t}R_i|^2 \, dx
+\int_0^t \!\! \int_\Omega e^{\gamma \tau} (2p\lambda k_i|\nabla H|^{2}-\gamma) |\partial_{t} R_i|^2 \, dxd\tau
+k_i\int_0^t \!\! \int_{\cC} e^{\gamma \tau} |\partial_{t}R_{i}|^{2}  \nabla (V+\lambda H)\cdot \bm{n} \, dSd\tau 
\notag \\
&= \int_\Omega |\partial_{t} R_{i}|^2|(0,x) \, dx
+k_{i} \int_0^t \!\! \int_\Omega   e^{\gamma \tau} |\partial_{t} R_{i}|^{2} \Delta V  \, dxd\tau
- k_{i} \int_0^t \!\! \int_\Omega  e^{\gamma \tau}  (\partial_{t} \nabla V \cdot \nabla R_{i})  \partial_{t} R_i\, dxd\tau
\notag \\
& \quad +2\int_0^t \!\! \int_\Omega  
e^{\gamma \tau}\left(
{k_e}h(|\lambda \nabla H| ) e^{-pH-\frac{\lambda}{2}H} \partial_{t} R_e
+ k_i\partial_{t} f_i \right) \partial_{t} R_i \, dxd\tau
\notag \\
&\lesssim_{p} \|R_{i0}\|_{1}^2 + \|R_{e0}\|^2
+(\mu +N(T))\int_0^t e^{\gamma\tau} \|\partial_{t} R_i(\tau)\|^2 \, d\tau
+\mu^{-1} \int_0^t e^{\gamma \tau} \|\partial_{t} R_e(\tau)\|^2 \, d\tau,
\lb{apes10}
\end{align}
where we have used \eqref{ell1}, \eqref{ell2}, and \er{NL4} as well as the Schwarz and Sobolev inequalities.  
We note that the second term on the left hand side of \er{apes10} is non-negative due to \eqref{coeff1}.
Owing to \er{chara1},
the third term on the left hand side of \er{apes5} 
is non-negative and thus can be dropped.
Then letting $\mu$ and $N(T)$ be sufficiently small and using \er{apes8}, we see that
\begin{equation}\lb{apes6t}
e^{\gamma t}\|\partial_{t}R_i(t)\|^2 +\int_0^t e^{\gamma \tau} \|\partial_{t}R_i(\tau)\|^2 \,d\tau 
\lesssim_{p} \|R_{i0}\|_{1}^2+\|R_{e0}\|_{1}^2.
\end{equation}

It remains to take the spatial derivatives.  That is, we will prove the estimate
\begin{gather}
e^{\gamma t}\|\nabla^{k}R_i(t)\|^2 +\int_0^t e^{\gamma \tau} \|\nabla^{k}R_i(\tau)\|^2 \,d\tau 
\lesssim_{p} \|R_{i0}\|_{2}^2+\|R_{e0}\|_{2}^2 + \int_0^t e^{\gamma \tau} \|R_e(\tau)\|^2_2 \, d\tau 
\lb{apes11}
\end{gather}          
 for $ k=1,2$.   
{\it  Lemma \ref{apriori1} follows from \eqref {apes11}.
Indeed, in order to complete the proof of  Lemma \ref{apriori1},}
we regard the parabolic equation \eqref{re2} as the Poisson equation to obtain the elliptic estimate
\begin{gather}\label{paraell1}
\|R_{e}\|_{2} \lesssim \|\partial_{t} R_{e}\| + \|R_{e}\|_{1}.
\end{gather}
Combining \eqref{apes2}, \eqref{apes8}, \eqref{apes2t}, \eqref{apes6}, and \eqref{apes6t},
and then applying \eqref{paraell1}, we have
\begin{align*}
e^{\gamma t} \left( \|R_i(t)\|^2 + \|R_e(t)\|_{2}^2 \right)
+\int_0^t e^{\gamma \tau} \left( \|R_i(\tau)\|^2 + \|R_e(\tau)\|_{2}^2 \right) \,d\tau 
\lesssim_{p} \|R_{i0}\|_{2}^2+\|R_{e0}\|_{2}^2.
\end{align*}
This estimate and \eqref{apes11} immediately yield \eqref{APE1}.
\end{proof}

\begin{proof}[Proof of \eqref{apes11}]
In order to take spatial derivatives we must be careful about the boundary.   
So we use local coordinates near the anode $\cA$.  
We will cover $\cA$ by a fixed number of small balls $V_j, j=1,...,N$ 
and cover the rest of $\Omega$ by one open set $V_0$.  Then we will choose a partial of unity 
$\{\zeta_j\}$ for $V_0,V_1,...,V_N$.  
Furthermore we will choose maps $\bm{\Phi_j}$ that locally flatten the boundary.

Now we make this procedure explicit.  
For any $\ve_{0}>0$, dimension $d=2,3$, and $0\le j\le N$, we choose the domain $V_{j}$, 
the functions $\zeta_{j}$, ${\zeta}_{j}'$, the bijection map 
$\bm{\Phi}_{j}:\mathbb R^{d} \to \mathbb R^{d}$ and its inverse map $\bm{\Psi}_{j}:\mathbb R^{d} \to \mathbb R^{d}$, to satisfy the 
following properties.

					\begin{enumerate}[(i)]
\item For $0\le j\le N$, both $\bm{\Psi}_{j}$ and $\bm{\Phi}_{j}$ belong to $C^{3}(\mathbb R^{d})$ 
and $|\det \nabla \bm{\Phi}_{j}|=1$, $|\det \nabla \bm{\Psi}_{j}|=1$.  
Moreover,  $\nabla  \bm{\Phi}_{j}=T_{j}+S_{j}$, and $\nabla  \bm{\Psi}_{j}={}^{t}T_{j}+S_{j}^{\star}$, where $T_{j}$ is a constant orthonormal matrix, ${}^{t}T_{j}$ is the transpose of $T_{j}$, and $S_{j},S_{j}^{\star} \in C^{2}(\mathbb R^{d})$ are $n\times n$ matrix functions with $\|S_{j}\|_{L^{\infty}} + \|S_{j}^{\star}\|_{L^{\infty}} \leq \ve_{0}$. Furthermore, $\nabla^{2} \bm{\Phi}_{j},\nabla^{2} \bm{\Psi}_{j} \in W^{1,\infty}(\real^d)$.
\item 
$V_{0} \subset \Omega$, while for $j\ne0$, we assume $\Omega\cap V_{j} = \bm{\Phi}_{j}(\mathbb R^{d}_{+}) \cap V_{j}$, and $\cA \cap V_{j} =  \bm{\Phi}_{j}(\partial \mathbb R^{d}_{+}) \cap V_{j}$.
\item 
${\rm supp} \zeta_{j} \subset {\rm supp} \zeta'_{j} \subset V_{j}$ and $ \zeta'_{j}=1$ on ${\rm supp} \zeta_{j}$.
There exist constants $\sigma>0$ and $L>0$ such that $\sum_{j=0}^{N} \zeta_{j}(x)=1$ for $x \in \Omega_{\sigma}$, 
$\sum_{j=1}^{N} \zeta_{j}(x)=1$ for $x \in \cA_{\sigma}$, and ${\rm supp} \nabla \zeta_{j} \subset \{ x \, | \, |x| \leq L\}$ for $j=0,\ldots,N$, where $\Omega_{\sigma}=\{x \in \mathbb R^{d} ; {\rm dist} (x,\Omega)\leq \sigma \}$ and $\cA_{\sigma}=\{x \in \mathbb R^{d} ; {\rm dist} (x,\cA)\leq \sigma \}$.
\item 
For $j\ne0$, 
$ \tilde{\bm{n}} (x) \cdot \nabla_{x} f  =  -(\partial_{y_{d}} f +  \bm{r}_{j}(y) \cdot \nabla_{y} f)$ for $x \in \Omega\cap V_{j}$ and $y=\bm{\Psi}_{j}(x)$,  where $\tilde{\bm{n}}:=-\nabla H/ |\nabla H|$, the restriction of $\tilde{\bm{n}}$ on $\cA$
is a normal unit vector of $\cA$ (see \eqref{Hasp3}), and the vector $\bm{r}_{j}=(r_{j1},\cdots,r_{jd}) \in C^{2}(\bm{\Psi}_{j}(\Omega\cap V_{j}))\cap W^{2,\infty}(\bm{\Psi}_{j}(\Omega\cap V_{j}))$ satisfies $ \|\bm{r}_{j}\|_{L^{\infty}} \leq \ve_{0}$.
\end{enumerate}
Hereafter we set $\ve_{0}=\frac{1}{2}$.

First let us consider {\it the estimate of $R_{i}$  away from the anode} $\cA$ by using the cut-off function $\zeta_{0}$.
We apply a first or second derivative
$\partial^{\alpha}=\partial^{\alpha}_{x}$ to the problem \er{re1} to obtain an equation for $\partial^\alpha R_i$, namely: 
\begin{align*}
&  \partial_t (\partial^{\alpha} {R}_i)
    + k_i \nabla (V+\lambda H ) \cdot \nabla(\partial^{\alpha}{R}_i)
+ p k_i \lambda  |{\nabla} {H}|^{2} (\partial^{\alpha} {R}_i)
\\
& = \partial^{\alpha} \left\{{k_e}h(|\lambda {\nabla} {H}| )e^{-p{H}-\frac{\lambda}{2}{H}}{R}_e \right\}
+ k_i\partial^{\alpha}{f}_i + {\cal R}_{\alpha},
\end{align*}
where the commutators ${\cal R}_{\alpha}$ are defined by
\begin{align*}
{\cal R}_{\alpha}&:= 
k_{i} [\partial^{\alpha}, \nabla (V+\lambda H ) \cdot {\nabla}]  {R}_i
+ p k_i \lambda  [\partial^{\alpha}, |{\nabla} {H}|^{2}] {R}_i.
\end{align*}
				We multiply 
this equation by  $2e^{\gamma t} \zeta_{0}^{2} \partial^{\alpha} {R}_i$
and integrate by parts over $[0,t]\times \Omega$ to obtain 
he following estimate (which is analogous to \eqref{apes5}):
\begin{align}
&e^{\gamma t} \int_\Omega \zeta_{0}^{2} |\partial^{\alpha}R_i|^2 \, dx
+\int_0^t \!\! \int_\Omega e^{\gamma \tau} \zeta_{0}^{2} (2p\lambda k_i|\nabla H|^{2}-\gamma) |\partial^{\alpha} R_i|^2 \, dxd\tau
\notag \\
& \quad +k_i\int_0^t \!\! \int_{\cC} e^{\gamma \tau}\zeta_{0}^{2}  |\partial^{\alpha}R_{i}|^{2}  \nabla (V+\lambda H)\cdot \bm{n} \, dSd\tau 
\notag \\
&= \int_\Omega \zeta_{0}^{2} |\partial^{\alpha} R_{i0}|^2 \, dx
+k_{i} \int_0^t \!\! \int_\Omega   e^{\gamma \tau} \zeta_{0}^{2} |\partial^{\alpha} R_{i}|^{2} \nabla \cdot \left\{ \zeta_{0}^{2} \nabla (V+\lambda H ) \right\}  \, dxd\tau
\notag \\
& \quad +2\int_0^t \!\! \int_\Omega  
e^{\gamma \tau} \zeta_{0}^{2} \left[
\partial^{\alpha} \left\{{k_e}h(|\lambda {\nabla} {H}| )e^{-p{H}-\frac{\lambda}{2}{H}}{R}_e \right\}
+ k_i\partial^{\alpha} f_i +{\cal R}_{\alpha} \right] 
\partial^{\alpha} R_i \, dxd\tau
\notag \\
&\leq \tilde{C}\|R_{i0}\|_{2}^2  + \tilde{C} \int_0^t e^{\gamma \tau} \| \nabla^{|\alpha|} R_i(\tau)\|^2 \, d\tau + C_{p}\int_0^t e^{\gamma \tau} \| R_i(\tau)\|_{|\alpha|-1}^2 \, d\tau
\notag \\
&\quad +C_{p} N(T)\int_0^t e^{\gamma \tau} \| R_i(\tau)\|^2_{|\alpha|} \, d\tau
+ C_{p} \int_0^t e^{\gamma \tau} \| R_e(\tau)\|_{2}^2 \, d\tau,
\lb{apes14}
\end{align}
where we have used \eqref{ell1}, \er{NL6}, and \eqref{NL5} as well as the Schwarz and Sobolev inequalities.  
Here $\tilde{C}>0$ is a constant independent of $p$.
We note that the second term on the left hand side of \er{apes14} is non-negative due to \eqref{coeff1}.
Furthermore, owing to \eqref{chara1}, the third term on the left hand side of \er{apes14} is non-negative and thus can be dropped.

Next we deal with the {\it estimate of $R_{i}$  near the anode} $\cA$.  
In the region $V_j$ for a fixed $j$ we use the local coordinate $y=\bm{\Psi}_{j}(x)$. 
We recall $\ve_{0}=\frac{1}{2}$. 
We define  
\begin{gather*}
(\hat{R}_{i}, \hat{R}_{e}, \hat{V})(t,y):=(R_{e},R_{i}, V)(t,\bm{\Phi}_{j}(y)), \quad
(\hat{H},\hat{\zeta}_{j},\hat{\zeta}_{j}')(y):=(H,\zeta,\zeta')(\bm{\Phi}_{j}(y)), \\
\hat{\nabla} := \nabla\bm{\Psi}_{j} \nabla_{y}.
\end{gather*}
Then  from \eqref{r0} and property (iv) of the local coordinate we have the transformed equation 
\begin{subequations}\label{yr0}
\begin{gather}
\begin{aligned}
&  \partial_t\hat{R}_i
+ k_{i} \lambda |\hat{\nabla} \hat{H}| (\partial_{y_{d}} \hat{R}_i +  \bm{r}_{j} \cdot \nabla_{y} \hat{R}_i)
+ k_i \hat{\nabla} \hat{V} \cdot \hat{\nabla} \hat{R}_i
+ p k_i \lambda  |\hat{\nabla} \hat{H}|^{2} \hat{R}_i
\\
& = 
{k_e}h(|\lambda \hat{\nabla} \hat{H}| )
e^{-p\hat{H}-\frac{\lambda}{2}\hat{H}}\hat{R}_e
+k_i\hat{f}_i, 
\end{aligned}
\label{yre1}\\
\hat{R}_{i}(0,y)= \hat{R}_{i0}(y) :=R_{i0}(\bm{\Phi}_{j}(y)),
\label{yri1}\\
\hat{R}_{i }(t,y_{1},y_{2},0)=0,
\label{yrba}
\end{gather}
\end{subequations}
where $t>0$ and $y \in \bm{\Psi}_{j} (\Omega\cap V_{j})$.
We note that for $c_{1}$ defined in \eqref{Hasp2},
\begin{gather}\label{coeff2}
|\hat{\nabla} \hat{H}(y)|=|\nabla_{x}H(\bm{\Phi}_{j}(y))| \geq c_{1} >0, \quad y \in \bm{\Psi}_{j} (\Omega\cap V_{j}).
\end{gather}
For notational convenience, we may express $\hat{\nabla} \hat{V} \cdot \hat{\nabla}$ as
\begin{gather*}
\hat{\nabla} \hat{V} \cdot \hat{\nabla} = \sum_{l=1}^{d} a_{l}(\nabla_{y} \bm{\Psi}_{j},\hat{\nabla} \hat{V}) \partial_{y_{l}}, \quad
\text{where} \quad |a_{l}(\nabla_{y} \bm{\Psi}_{j},\hat{\nabla} \hat{V})| \lesssim |\hat{\nabla} \hat{V}|.
\end{gather*}

We finally apply the spatial derivatives $\partial^{\alpha}=\partial^{\alpha}_{y}$   
(say for dimension 3) to the problem \er{yr0} to obtain
\begin{subequations}\label{hr0}
\begin{gather}
\begin{aligned}
&  \partial_t (\partial^{\alpha} \hat{R}_i)
+ k_{i} \lambda |\hat{\nabla} \hat{H}| \{\partial_{y_{3}} (\partial^{\alpha}\hat{R}_i) +  \bm{r}_{j} \cdot \nabla_{y} (\partial^{\alpha}\hat{R}_i)\}
+ k_i \hat{\nabla} \hat{V} \cdot \hat{\nabla} (\partial^{\alpha} \hat{R}_i)
+ p k_i \lambda  |\hat{\nabla} \hat{H}|^{2} (\partial^{\alpha} \hat{R}_i)
\\
& = \partial^{\alpha} \left\{{k_e}h(|\lambda \hat{\nabla} \hat{H}| )e^{-p\hat{H}-\frac{\lambda}{2}\hat{H}}\hat{R}_e \right\}
+ k_i\partial^{\alpha}\hat{f}_i + {\cal R}_{\alpha}',
\end{aligned}
\label{hre1}\\
\partial^{\alpha} \hat{R}_{i}(0,y)= \partial^{\alpha} \hat{R}_{i0}(y),
\label{hri1}\\
\partial^{\alpha}\hat{R}_{i }(t,y_{1},y_{2},0)=\left\{
\begin{array}{ll}
{\cal B}={\cal B}(t,y_{1},y_{2},0) & \text{if $\alpha=(0,0,2)$},
\\
0 & \text{otherwise},
\end{array}
\right.
\label{hrba}
\end{gather}
\end{subequations}
where the commutators ${\cal R}_{\alpha}'$ and the boundary data ${\cal B}$ are defined by
\begin{align*}
{\cal R}_{\alpha}'&:= 
k_{i} \lambda \left[\partial^{\alpha}, |\hat{\nabla} \hat{H}| \partial_{y_{3}}\right]  \hat{R}_i
+ k_{i} \lambda \left[\partial^{\alpha}, |\hat{\nabla} \hat{H}| \bm{r}_{j} \cdot \nabla_{y} \right] \hat{R}_i
+ k_{i} \left[\partial^{\alpha}, \hat{\nabla} \hat{V}\cdot \hat{\nabla} \right] \hat{R}_i
\\
&\quad + p k_i \lambda  \left[\partial^{\alpha}, |\hat{\nabla} \hat{H}|^{2}\right] \hat{R}_i,
\\
{\cal B} & := \frac{k_{e}\partial_{y_{3}}\left\{ h(|\hat{\nabla} \hat{V} + \lambda \hat{\nabla} \hat{H}| )e^{-p\hat{H}-\frac{\lambda}{2}\hat{H}}\hat{R}_e \right\}}{k_{i}\left\{ \lambda |\hat{\nabla} \hat{H}|(1+r_{j3}) +a_{3} \right\}}(t,y_{1},y_{2},0).
\end{align*}
			Here 
we have used \eqref{yre1}, \eqref{yrba}, $\hat{R}_{e}(t,y_{1},y_{2},0)=0$, $\|\bm{r}_{j}\|_{L^{\infty}} \leq \frac{1}{2}$, and $|a_{3}| \lesssim |\hat{\nabla}\hat{V}| \lesssim N(T) \ll 1$ in deriving $\partial_{y_{3}} \hat{R}_{i}(t,y_{1},y_{2},0)=0$.  
We have used \eqref{hre1} with $\alpha=(0,0,1)$ and \eqref{hrba} with $|\alpha| =1$ in deriving $\partial_{y_{3}}^{2} \hat{R}_{i}(t,y_{1},y_{2},0)=\cal B$. 
The standard trace theorem gives
\begin{gather}\label{bb1}
\|  \hat{\zeta}_{j}' {\cal B} (t,y_{1},y_{2},0)\|_{L^{2}(\mathbb R^{2}_{+})} \lesssim \|R_{e}\|_{2},
\end{gather}
where we have also used $\|\bm{r}_{j}\|_{L^{\infty}} \leq \frac{1}{2}$ and $|a_{3}|\lesssim|\hat{\nabla}\hat{V}| \lesssim N(T) \ll 1$ to ensure the positivity of the denominator of $\cal B$.

Now we multiply \eqref{hre1} by $2e^{\gamma t} \hat{\zeta}_{j}^{2} \partial^{\alpha}\hat{R}_i$ and
integrate by parts over $[0,t]\times \mathbb R^{3}_{+}$ to obtain 
\begin{align}
{}&
e^{\gamma t} \int_{\mathbb R^{3}_{+}}  \hat{\zeta}_{j}^{2} |\partial^{\alpha}\hat{R}_i|^2 \, dy
+\int_0^t \!\! \int_{\mathbb R^{3}_{+}} e^{\gamma \tau} \hat{\zeta}_{j}^{2} (2p\lambda k_i|\hat{\nabla}\hat{H}|^{2} -\gamma)|\partial^{\alpha} \hat{R}_i|^2 \, dyd\tau
\notag \\
&= \int_\Omega \hat{\zeta}_{j}^{2} |\partial_{\alpha} \hat{R}_{i0}|^2 \, dy
+ k_{i} \int_0^t \!\! \int_{\mathbb R^{2}} e^{\gamma \tau} \hat{\zeta}_{j}^{2} \left\{ \lambda |\hat{\nabla} \hat{H}|(1+r_{j3}) +a_{3} \right\} |\partial^{\alpha} \hat{R}_i|^2(t,y_{1},y_{2},0) dy_{1}dy_{2}d\tau
\notag \\
& \quad + k_{i} \int_0^t \!\! \int_{\mathbb R^{3}_{+}}  e^{\gamma \tau} |\partial^{\alpha} \hat{R}_i|^2 \sum_{l=1}^{3} \partial_{y_{l}}\left[ \hat{\zeta}_{j}^{2} \left\{ \lambda |\hat{\nabla} \hat{H}|(\delta_{3l} + r_{jl}) +a_{l} \right\} \right] \, dyd\tau
\notag \\
& \quad +2\int_0^t \!\! \int_{\mathbb R^{3}_{+}}
e^{\gamma \tau} \hat{\zeta}_{j}^{2}\left(
\partial^{\alpha} \left\{{k_e}h(|\lambda \hat{\nabla} \hat{H}| )e^{-p\hat{H}-\frac{\lambda}{2}\hat{H}}\hat{R}_e \right\}
+ k_i\partial^{\alpha}\hat{f}_i + {\cal R}_{\alpha}'
\right) \partial^{\alpha} \hat{R}_i \, dyd\tau
\notag \\
&\leq \tilde{C}\|R_{i0}\|_{2}^2  + \tilde{C} \int_0^t e^{\gamma \tau} \| \nabla^{|\alpha|} R_i(\tau)\|^2 \, d\tau + C_{p}\int_0^t e^{\gamma \tau} \| R_i(\tau)\|_{|\alpha|-1}^2 \, d\tau
\notag \\
&\quad +C_{p} N(T)\int_0^t e^{\gamma \tau} \| R_i(\tau)\|^2_{|\alpha|} \, d\tau
+ C_{p} \int_0^t e^{\gamma \tau} \| R_e(\tau)\|_{2}^2 \, d\tau,
\lb{apes13}
\end{align}
where we have used \eqref{ell1}, \er{NL6}, \er{NL5}, \eqref{hrba}, \eqref{bb1}, and ${\rm supp} \zeta'_{j}=1$ on ${\rm supp} \zeta_{j}$ as well as the Schwarz and Sobolev inequalities. 
Here $\tilde{C}>0$ is a constant independent of $p$.
We note that the second term on the left hand side of \er{apes13} is non-negative due to \eqref{coeff1} and \eqref{coeff2}.

We are finally able to prove \eqref{apes11}.
We recall that the second terms on the left sides of \er{apes14} and \er{apes13} provide  good contributions, while the third term on the left side of \er{apes14}, 
which is an integral over $\cC$, can be dropped.
We sum up $\eqref{apes14}$ and $\eqref{apes13}$ for $j=1,\ldots,N$ and $\alpha$ with $|\alpha|=1$.
Then we take $p$ sufficiently large to absorb the term $\tilde{C} \int_0^t e^{\gamma \tau} \| \nabla^{|\alpha|} R_i(\tau)\|^2\, d\tau$ into the good contribution on the left hand side of the resulting inequality, and also let $N(T)$ be small relative to $p$.
Using \eqref{apes6}, we finally arrive at \eqref{apes11} with $k=1$.
In a similar fashion we  obtain \eqref{apes11} for $k=2$. 
\end{proof}

\subsection{Nonlinear instability for $\kappa<0$}    \lb{S3.2}

This subsection provides the proof of Theorem \ref {thm2}.  
The key assumption is that $\kappa(\lambda)<0$.  
Recall that the self-adjoint operator $A = -\Delta - g(\lambda \nabla H)$ has its lowest eigenvalue 
$-\mu_1 = \kappa(\lambda)$ with an eigenfunction denoted by $\varphi_1$.  
Then $\varphi_1$ does not vanish so we may assume it is positive in $\Omega$ and that $\|\varphi_1\|=1$.

For convenience, we choose initial data $\cR_0  =(\delta\psi_i,\delta \psi_e) \in X$
that satisfies the compatibility condition \eqref{com2}, as well as
\begin{equation}\lb{ass1}
\psi_i,\psi_e \geq 0,  \quad
\psi_e \not\equiv0, \quad
\|\psi_i\|_{1}^2+\|\psi_e\|_{1}^2=1.
\end{equation}

\begin{lem}\lb{apriori3}
Let $\kappa(\lambda)<0$.
There exists $\ve_0>0$ such that 
if the solution $(R_i,R_e)$ of problem \er{r0} with $(R_{i0},R_{e0})=(\delta \psi_i, \delta \psi_e)$
for some $\delta \in (0,\ve_0)$ satisfies $N(T) \leq \ve_0$ and if 
\begin{equation*} 
\left\|R_e(t)  -  \delta e^{k_etA}\psi_e \right\|
\leq \delta e^{- \kappa(\lambda)k_e t}
\qu \text{for} \ \ t \in[0,T],
\end{equation*}
then there is a constant $m>0$ such that 
\begin{gather}
\|R_i(t)\|_2+\|R_e(t)\|_2 \lesssim \delta e^{- \kappa(\lambda)k_e t},
\lb{APE4} \\
    \left\|R_e(t)  -  \delta e^{k_etA}\psi_e \right\|
< m\delta^2 e^{- 2\kappa(\lambda)k_e t}
\quad \text{for} \ \ t\in[0,T].  
\lb{APE5} 
\end{gather}
\end{lem}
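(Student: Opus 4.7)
The plan is to bootstrap the assumed control on the parabolic remainder $R_e - \delta e^{k_e tA}\psi_e$ by combining (i) a weighted energy estimate for the transport equation \eqref{re1} driven by $R_e$, and (ii) a Duhamel representation for \eqref{re2} exploiting the quadratic nature of $f_e$. Throughout I treat $e^{k_e tA}$ as the linearized flow operator associated with \eqref{re2}; since $\kappa(\lambda)$ is the lowest eigenvalue of $A$ and is negative by hypothesis, this operator satisfies the semigroup bound $\|e^{k_e tA}\|_{L^2\to L^2}\lesssim e^{-\kappa(\lambda)k_e t}$, and the bootstrap hypothesis immediately yields the $L^2$ bound $\|R_e(t)\|\lesssim \delta e^{-\kappa(\lambda)k_e t}$.

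To bound $R_i$, I would mimic \eqref{apes5} but with the time-decaying weight $e^{2\kappa(\lambda) k_e t}$ in place of $e^{\gamma t}$. Multiplying \eqref{re1} by $2 e^{2\kappa(\lambda) k_e t} R_i$ and integrating by parts, the cathode boundary integral is non-negative by \eqref{chara1} and is dropped, the dissipative term $2p k_i\lambda|\nabla H|^2 R_i^2$ gives a positive contribution provided $p$ is chosen so large that $2p k_i \lambda c_1^2 + 2\kappa(\lambda) k_e > 0$ (with $c_1$ from \eqref{Hasp2}), and the source term $k_e h(|\lambda\nabla H|) e^{-pH-\lambda H/2} R_e$ is absorbed via Schwarz using the just-established bound on $\|R_e\|$. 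This yields
\begin{equation*}
e^{2\kappa(\lambda) k_e t}\|R_i(t)\|^2 \lesssim \delta^2 + \int_0^t e^{2\kappa(\lambda) k_e\tau}\|R_e(\tau)\|^2 d\tau \lesssim \delta^2,
\end{equation*}
that is, $\|R_i(t)\|\lesssim \delta e^{-\kappa(\lambda)k_e t}$. The $H^1$ and $H^2$ upgrades in \eqref{APE4} are obtained by applying $\partial_t$ and spatial derivatives $\partial^\alpha$ exactly as in \eqref{apes6t} and \eqref{apes11}, reusing the boundary-flattening near $\cA$, while the $H^2$ bound on $R_e$ follows from the parabolic gain \eqref{paraell1}.

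For the quadratic remainder \eqref{APE5}, set $w(t) := R_e(t) - \delta e^{k_e tA}\psi_e$. Since the linear term solves the linearized equation with initial datum $\delta\psi_e$, we have $w(0) = 0$ and $\partial_t w + k_e A w = k_e f_e[R_e, V]$, so Duhamel gives
\begin{equation*}
w(t) = k_e\int_0^t e^{k_e(t-s)A} f_e[R_e,V](s)\, ds.
\end{equation*}
Combining the pointwise estimate \eqref{NL2}, the elliptic bound \eqref{ell1}, and \eqref{APE4} produces $\|f_e[R_e,V](s)\|\lesssim (\|R_e(s)\|_1)(\|V(s)\|_2) \lesssim \delta^2 e^{-2\kappa(\lambda) k_e s}$. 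Using the semigroup bound once more,
\begin{equation*}
\|w(t)\|\lesssim \delta^2 \int_0^t e^{-\kappa(\lambda)k_e(t-s)}e^{-2\kappa(\lambda)k_e s}\, ds \lesssim \delta^2 e^{-2\kappa(\lambda) k_e t},
\end{equation*}
which is \eqref{APE5} for a suitable $m$.

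The main obstacle I anticipate is the coupling of weight parameters: the constant $p$ fixed in \eqref{newf0} must be taken large enough both to dominate $|\kappa(\lambda)| k_e$ in the weighted $L^2$ energy for $R_i$ and to absorb the commutator terms ${\cal R}_\alpha$, ${\cal R}_\alpha'$ that arise in the $H^2$ estimate near $\cA$. Because the growth rate $-\kappa(\lambda)k_e$ is now fixed by the eigenvalue rather than free as in the stability proof, one cannot simply shrink $\gamma$; instead $p$ must be chosen once and for all at the start, and then both $\delta$ and the threshold $\ve_0$ are required to be small \emph{relative to $p$} so that the nonlinear contributions $N(T)$-terms in the energy inequalities remain negligible throughout the bootstrap window $[0,T]$.
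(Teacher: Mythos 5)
Your overall architecture (energy estimate for $R_i$, Duhamel for the remainder $w=R_e-\delta e^{k_etA}\psi_e$) mirrors the paper, but the specific weight you chose in the $R_i$ energy estimate creates a genuine gap. Multiplying \eqref{re1} by $2e^{2\kappa(\lambda)k_e t}R_i$ and using $\|R_e(\tau)\|\lesssim\delta e^{-\kappa(\lambda)k_e\tau}$ on the source gives
\[
\int_0^t e^{2\kappa(\lambda)k_e\tau}\|R_e(\tau)\|^2\,d\tau\ \lesssim\ \delta^2\int_0^t e^{2\kappa(\lambda)k_e\tau}\,e^{-2\kappa(\lambda)k_e\tau}\,d\tau\ =\ \delta^2\, t,
\]
not $\lesssim\delta^2$ as you claim. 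Your weight exactly cancels the expected growth of $\|R_e\|^2$, so the right-hand side acquires a secular $t$ factor, and the conclusion degrades to $\|R_i(t)\|\lesssim\delta\sqrt{1+t}\,e^{-\kappa(\lambda)k_e t}$, which is not \eqref{APE4}. Since in the subsequent instability argument $T$ can be large (it scales like $|\kappa(\lambda)k_e|^{-1}\log(\delta^{-1})$), the extra $\sqrt{t}$ cannot be absorbed.

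The paper avoids this by taking $\gamma=0$ (no exponential weight at all) and exploiting that $\int_0^t\|R_e(\tau)\|^2\,d\tau\lesssim\delta^2\int_0^t e^{-2\kappa(\lambda)k_e\tau}\,d\tau\lesssim\delta^2 e^{-2\kappa(\lambda)k_e t}$, since $-\kappa(\lambda)>0$ makes the integral dominated by its upper endpoint. This gives directly $\|R_i(t)\|^2+\int_0^t\|R_i(\tau)\|^2\,d\tau\lesssim\delta^2 e^{-2\kappa(\lambda)k_e t}$ with no polynomial loss. A second, smaller issue: with your decaying weight $e^{2\kappa(\lambda)k_e t}$ the term coming from integration by parts in $t$ has sign $-2\kappa(\lambda)k_e>0$, so the coefficient of the $R_i^2$ integral is $2p\lambda k_i|\nabla H|^2-2\kappa(\lambda)k_e$, automatically positive for any $p>0$; the condition ``$2pk_i\lambda c_1^2+2\kappa(\lambda)k_e>0$'' you impose reflects a sign error and is neither needed nor the actual obstruction. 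The genuine role of a large $p$ is the one it already plays in Lemma \ref{apriori1}, namely absorbing the $\tilde C\int_0^t\|\nabla^{|\alpha|}R_i\|^2$ terms near the anode, and the same fixed $p$ works here. Your Duhamel step for \eqref{APE5} is fine once \eqref{APE4} is established correctly.
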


\begin{proof}
Note that $\|e^{k_etA}\psi_e\| \le e^{- \kappa(\lambda)k_e t}$, so that
\begin{equation}\lb{apes20} 
\|R_e(t)\| \lesssim \delta e^{- \kappa(\lambda)k_e t}.
\end{equation} 
By a similar energy method as in the proof of Lemma \ref{apriori1},
the inequality \er{APE4} can be shown as follows.
Let $\gamma =0$ but $p$ be the same constant as in the proof of Lemma \ref{apriori1}.
Since $p$ is fixed, we do not need to be so precise for its dependence, so we refrain to write it in the following inequalities.  
Using \eqref{apes20} instead of \eqref{poin} to estimate the term $g(|\lambda \nabla H|){R}_e^2$ in \eqref{apes1}, 
we obtain
\[
\|R_e(t)\|^2
+\int_0^t \|R_e(\tau)\|_1^2 \,d\tau
\lesssim  \delta^{2} e^{- 2\kappa(\lambda)k_e t},
\]
where we have also used $\|R_{e0}\| \leq \delta$.
It also follows from the same method of the derivation of \eqref{apes8} that 
\[
\|\nabla R_e(t)\|^2
+\int_0^t  \| \partial_{t} R_e(\tau)\|^2 \,d\tau
\lesssim \delta^{2} e^{- 2\kappa(\lambda)k_e t}.
\]
Following the derivation of \eqref{apes2t} but using the above inequality instead of \eqref{poin},
we arrive at
\[
\| \partial_{t} R_e(t)\|^2
+\int_0^t \| \partial_{t} R_e(\tau)\|_1^2 \,d\tau
\lesssim \delta^{2} e^{- 2\kappa(\lambda)k_e t} + \int_0^t \| \partial_{t} R_i(\tau)\|^2 \, d\tau.
\]
Furthermore, we just follow the derivations of \eqref{apes6}, \eqref{apes6t}, and \eqref{apes11} to obtain 
\begin{gather*}
\|R_i(t)\|^2 +\int_0^t \|R_i(\tau)\|^2 \,d\tau 
\lesssim \delta^{2} e^{- 2\kappa(\lambda)k_e t},
\\
\|\partial_{t}R_i(t)\|^2 +\int_0^t \|\partial_{t}R_i(\tau)\|^2 \,d\tau 
\lesssim \delta^{2} e^{- 2\kappa(\lambda)k_e t},
\\
\|\nabla^{k}R_i(t)\|^2 +\int_0^t \|\nabla^{k}R_i(\tau)\|^2 \,d\tau 
\lesssim \delta^{2} e^{- 2\kappa(\lambda)k_e t} + \int_0^t \|R_e(\tau)\|^2_2 \, d\tau \quad \text{for $k=1,2$.}
\end{gather*}
Combining these inequalities in the same way as in the last paragraph of the proof of Lemma \ref{apriori1},
we conclude \eqref{APE4}.

To prove \er{APE5}, 
we use  Duhamel's principle to write  
\begin{gather*}
R_e(t)  =  e^{k_etA} \delta\psi_e  +  k_e\int_0^t e^{k_e(t-\tau)A} f_e(\tau) d\tau.  
\end{gather*}
It follows that 
\begin{align*}
\left\|R_e(t)-\delta e^{k_etA} \psi_e \right\|  
&=  k_e \int_0^t  \|e^{k_e(t-\tau)A} f_e(\tau)\| d\tau  
\\
& \lesssim \int_0^t  e^{- \kappa(\lambda)k_e (t-\tau)} (\|R_{i}(\tau)\|_2^{2}+\|R_{e}(\tau)\|_2^{2}) d\tau  
< m\delta^2 e^{- 2\kappa(\lambda)k_e t}
\end{align*}
by \er{NL2} and \er{APE4},
where $m$ is a positive constant independent of $t$.
\end{proof}

\bigskip   
\begin{proof}[Proof of Theorem \ref{thm2}]
The assertion of Theorem \ref{thm2} is equivalent to the 
{\it 
existence of $\ve>0$ such that for all $\delta>0$, 
the rewritten problem \eqref{r0} with the initial data $\|R_{i0}\|_{2} + \|R_{e0}\|_{2}  < \delta$ and $R_{e0} \neq 0$ has a unique solution $(R_{i},R_{e}) \in C([0,T];X)$ such that  $\|R_{i}(T)\|_{2} + \|R_{e}(T)\|_{2}  \ge \ve$ for some $T>0$.
}
In what follows, we prove this equivalent assertion.

In terms of $\ve_0$ and $m$ given in Lemma \ref{apriori3},
we also define $\nu$ and $T$ and take positive constants $\ve$ and  $\delta$ to satisfy 
\begin{gather*}
\nu=\langle\psi_e,\varphi_1\rangle,
\quad
\ve<\min \left\{1,\ve_0,\frac{\nu}{3m},\frac{\nu^2}{6m}\right\},
\quad
\delta<\min\left\{\frac{\ve_0}{2},\ve\right\},
\\  
T = \sup\left\{   t; \ 
\left\|R_e(\tau)-\delta  e^{k_e\tau A}\psi_e  \right\|
\leq    
\delta e^{- \kappa(\lambda)k_e \tau},  \  
\|R_{i}(\tau)\|_{2} + \|R_{e}(\tau)\|_{2} < \ve \  \text{for} \ \tau \in (0,t) 
\!\right\},
\end{gather*}
where   $\nu \in (0,1]$ by \er{ass1}.
Then we choose any $T_*>0$ so that 
\[
2\ve < \delta \nu e^{-\kappa(\lambda) k_eT_*} < 3\ve.
\]

Let us show that either $T_*<T$ or $N(T)=\ve$ holds. Suppose, contrary to our claim, that
$T_*\geq T$ and $N(T)<\ve$ hold.
From the definition of $T$, the following equality holds:
\begin{align*}
\delta e^{-\kappa(\lambda) k_e T}
=\left\|   R_e(T) - \delta e^{k_eAT}\psi_e  \right\|
< m\delta^2 e^{- 2\kappa(\lambda)k_eT }, 
\end{align*}
where the inequality holds by the virtue of Lemma \ref{apriori3}.
This gives $1/m<\delta e^{-\kappa(\lambda) k_e T}$. 
On the other hand, the definitions of $T_*$ and $\ve$ yield
$\delta e^{-\kappa(\lambda) k_e T_*}<3\ve/\nu\leq 1/m$.
These two inequalities lead to $T_*<T$ 
which contradicts our assumption $T_*\geq T$.

If $N(T)=\ve$ holds, the proof is complete.
Thus, it remains to deal with the case $T_*<T$ 
for which \er{APE5} holds from Lemma \ref{apriori3}.
By \er{APE5}, the triangle inequality, and Parseval's equality,
we obtain
\begin{align*}
\|R_e(T_*)\| 
 \geq \delta e^{- \kappa(\lambda)k_eT_* }\langle \psi_e, \varphi_1 \rangle
 -m\delta^2e^{- 2\kappa(\lambda)k_eT_* }.  
\end{align*}
By the conditions on $\nu$, $T_*$, and $\ve$, this expression is estimated from below by
\begin{align*}
\delta e^{- \kappa(\lambda)k_eT_* }\langle \psi_e, \varphi_1 \rangle
-m\delta^2e^{- 2\kappa(\lambda)k_eT_* }
& = 
\delta \nu e^{- \kappa(\lambda)k_eT_* } \left(1 -\frac{m}{\nu^2} \delta \nu e^{- \kappa(\lambda)k_eT_* } \right)
\\
& \geq 2\ve  \left(1 -3\frac{m}{\nu^2}\ve \right) 
>\ve.
\end{align*}
Consequently, we conclude that $\|R_e(T_*)\| \geq \ve$.
\end{proof}

\subsection{Electron-free case}\lb{S3.3}
This subsection is devoted to the proof of Proposition \ref{pro1}, which asserts that the set 
$\{(\rho_{i0},\rho_{e0})\in H^2(\Omega) \times H^2(\Omega) \, ; \, \text{$\rho_{e0}\equiv0$ and \eqref{com} holds} \}$
is a local stable manifold of the system \er{re1}--\er{re3} for any $\lambda > 0$.
Similarly to the proof of Theorem \ref{thm1} in subsection \ref{S3.1}, 
we can establish the unique existence of time-global solutions to problem \er{r0}
by combining the time-local solvability stated in Lemma \ref{local-time} 
and the following a priori estimate.

\begin{lem}\lb{apriori2}
Let $\lambda>0$.
Suppose that $(R_i,R_e,V)$ satisfying \er{reg2} is a solution 
to problem \er{r0} in a time interval $[0,T]$ with the initial data $R_{e0}\equiv0$.
There exist constants $\de>0$ and $\gamma>0$ such that if $N(T) \leq \de$, then there hold that
\begin{gather}
R_e(t,x)=0 \quad \text {for} \ \ (t,x) \in [0,T]\times \Omega,
\lb{APE2} \\
e^{\ga t}\|R_i(t)\|_2^{2}
+\int_0^t e^{\ga \tau} \|R_i(\tau)\|_2^{2} \,d\tau
\lesssim \|R_{i0}\|_2^{2} \quad \text {for} \ \ t \in [0,T].
\lb{APE3}
\end{gather}
\end{lem}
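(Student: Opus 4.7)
The plan is to exploit the fact that equation \eqref{re2} is \emph{linear and homogeneous} in $R_e$ when $V$ is regarded as a coefficient, so that a zero initial datum forces $R_e\equiv 0$ for all time. Once this is established, the equation \eqref{re1} for $R_i$ decouples (with the $R_e$-term vanishing), and the desired estimate \eqref{APE3} follows from essentially the same arguments used for $R_i$ in the proof of Lemma \ref{apriori1}, but now with $\kappa(\lambda)$ playing no role.

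First I would prove \eqref{APE2}. Inspecting the definition of $f_e[R_e,V]$, every term is linear in $R_e$; only the coefficients depend on $V$ (hence on $R_i$ and $R_e$ through the elliptic equation \eqref{re3}). Multiplying \eqref{re2} by $R_e$, integrating by parts over $\Omega$, and using the boundary conditions \eqref{rba}--\eqref{rbc} gives
\begin{equation*}
\tfrac{1}{2}\tfrac{d}{dt}\|R_e\|^2 + k_e\|\nabla R_e\|^2 = k_e\int_\Omega g(|\lambda\nabla H|)R_e^2\,dx + k_e\int_\Omega f_e[R_e,V]\, R_e\,dx.
\end{equation*}
Bounding the first right-hand term by $C\|R_e\|^2$ and using the pointwise estimate \eqref{NL2} together with the Schwarz inequality on the second, we obtain for $N(T)\le\delta$ sufficiently small that $\tfrac{d}{dt}\|R_e\|^2 \le C\|R_e\|^2$. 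Since $R_{e0}\equiv 0$, Gronwall's inequality yields $\|R_e(t)\|=0$ for every $t\in[0,T]$, hence $R_e\equiv 0$ as claimed.

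Next I would prove \eqref{APE3}. Having shown $R_e\equiv 0$, the elliptic equation \eqref{re3} reduces to $\Delta V = e^{pH}R_i$, and the transport equation for $R_i$ becomes
\begin{equation*}
\partial_t R_i + k_i\nabla(V+\lambda H)\cdot\nabla R_i + pk_i\lambda|\nabla H|^2 R_i = -k_i(\Delta V + p\nabla V\cdot\nabla H)R_i.
\end{equation*}
Now I would run the same localized energy argument as in the proof of \eqref{APE1}: choose $p>0$ so large that $2p\lambda k_i c_1>\gamma$ with $c_1$ from \eqref{Hasp2}, multiply by $2e^{\gamma t}R_i$, and integrate by parts. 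The crucial dissipative term $\int e^{\gamma\tau}(2p\lambda k_i|\nabla H|^2-\gamma)R_i^2$ on the left, the positive sign of the cathode boundary contribution by \eqref{chara1}, the vanishing of $R_i$ on $\cA$ from \eqref{rba}, together with $R_e\equiv 0$, give the analogue of \eqref{apes6} without any $\|R_{e0}\|$ term. Repeating this with spatial derivatives $\partial^\alpha$ for $|\alpha|=1,2$ (using the cut-off functions $\zeta_j$ and the local flattening $\bm{\Psi}_j$ of the anode exactly as in the earlier proof) yields the $H^2$ analogue of \eqref{apes11}, again with the $R_e$-terms absent. Summing the resulting bounds gives \eqref{APE3}.

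I do not anticipate any serious obstacle here: the technical core is already contained in the proof of Lemma \ref{apriori1}, and everything related to the parabolic equation, the condition $\kappa(\lambda)>0$, and the Poincar\'e-type inequality \eqref{poin} is no longer needed. The one point requiring care is verifying the uniqueness step for $R_e$: because the coefficients of \eqref{re2} depend on $V$, hence implicitly on $R_e$ itself, one must write \eqref{re2} as a linear parabolic equation for $R_e$ with \emph{prescribed} coefficient $V$ and absorb the $V$-dependence through the smallness of $N(T)$; this is what makes the coefficient of $\|R_e\|_1^2$ on the right-hand side controllable by the dissipative $k_e\|\nabla R_e\|^2$ on the left.
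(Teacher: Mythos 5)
Your proposal follows essentially the same two steps as the paper: first exploit the linear homogeneity of \eqref{re2} in $R_e$ (with $V$ treated as a coefficient controlled by $N(T)$) to conclude via Gronwall that $R_e\equiv 0$, and then observe that the $R_i$ energy estimates \eqref{apes6} and \eqref{apes11} from Lemma \ref{apriori1} remain valid verbatim once the $R_e$-terms drop out, without ever invoking $\kappa(\lambda)>0$ or the inequality \eqref{poin}. This matches the paper's argument, including the point of care you flag about $V$ depending on $(R_i,R_e)$ but being tamed by the smallness of $N(T)$.
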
 
\begin{proof}
We multiply \er{re2} by $2R_e$,
integrate it by parts over $[0,t]\times \Omega$, 
and use $R_{e0}\equiv0$ and the boundary conditions \er{rba} and \er{rbc} to obtain 
\begin{align*}
\int_\Omega {R}_e^2 \, dx
+2k_e\int_0^t \!\! \int_\Omega |\nabla R_e|^2 \, dxd\tau
&=2k_e\int_0^t \!\! \int_\Omega g(|\lambda \nabla H|){R}_e^2 \, dxd\tau
+2k_e\int_0^t \!\! \int_\Omega f_e R_e \, dxd\tau
\notag \\
&\le \mu \int_0^t \|\nabla R_{e}(\tau)\|^2 \, d\tau
+ C_{\mu} \int_0^t \|R_{e}(\tau)\|^2 \, d\tau,
\end{align*}
where $\mu>0$ and $C_{\mu}$ is constant depending on $\mu$.  
We have used \er{NL2} and Schwarz's inequality in deriving the inequality.
Then letting $\mu<2k_e$ and applying Gr\"onwall's inequality, we obtain \eqref{APE2}.

It remains to prove \er{APE3}, knowing that $R_e\equiv 0$.  
Even for the case $\lambda>0$, the inequalities \er{apes6} and \er{apes11} in the proof of Lemma \ref{apriori1} are true.  
Thus we obtain \eqref{APE3}.
\end{proof}

\begin{proof}     [Proof of Proposition \ref{pro1}] 
It suffices to prove that the ions disappear; that is, $R_i=\ro_i e^{- pH}$ eventually vanishes.
We showed above that $R_e\equiv0$. 
Hence, it suffices to prove that there exists some time $T_0>0$ such that
$R_i(T_0,x)=0$ for any $x \in \Omega$, 
because the problem \er{r0} with the initial time $t=T_0$ and 
the initial data $R_i(T_0,x)=R_e(T_0,x)=0$
has a unique solution $(R_i,R_e,V)=(0,0,0)$.
We note that $N(\infty)$ can be made arbitrarily small by taking $\|R_{i0}\|_{2}$ suitably small.

We claim that 
\begin{gather}\label{ulbound1}
0<\Phi(t,x):=(V + \lambda H)(t,x) < \lambda, \quad t \geq 1, \ x \in \Omega.
\end{gather}
Indeed, we apply the maximum principle to the linear elliptic equation \eqref{re3} 
together with \eqref{rba}, \eqref{rbc}, $R_e\equiv0$ and $R_{i} \geq 0$ 
so as to obtain $V \leq 0$. 
Since $H<1$ in $\Omega$, we have  $(V + \lambda H)(t,x) < \lambda$.  

It remains to show that $(V + \lambda H)(t,x) >0$ in $\Omega$. 
First we note that $(V + \lambda H)(t,x)=0$ if  $x \in \cA$. 
We will make use of the characteristics of the hyperbolic equation \eqref{re1}.
For a suitably small $\ve>0$, we let $\cA_{\ve}=\{x \in \Omega ; {\rm dist} (x,\cA)<\ve \}$.  
Given $x\in\cA_\ve$ and $t\ge 1$, let $y(\cdot)$ be   
 the characteristic (parameterized by $s$) defined by  
 $$\frac{dy}{ds} = \nabla (V + \lambda H)(s,y(s)) \text{ with } y(t)=x.$$  
 It  satisfies $y(t_{0}) \in \cA$ for some $t_{0} \in (0,t)$.
Furthermore, it is seen from \eqref{Hasp2} and \eqref{NT1} that 
\begin{gather*}
\frac{d}{ds}(V + \lambda H)(s,y(s)) = (\partial_{t} V)(s,y(s)) + |\nabla V + \lambda \nabla H|^{2}(s,y(s))\geq \frac{1}{2} c_{1}^2>0.
\end{gather*}
These facts mean that $(V + \lambda H)(t,x)=(V + \lambda H)(t,y(t))>(V + \lambda H)(t_{0},y(t_{0}))=0$ for $t\geq 1$ and $x \in  \cA_{\ve}$. 
On the other hand, the maximum principle ensures that $H(x^*) \geq c_{2}>0$ 
for any $x^* \in \Omega  \backslash \cA_{\ve}$, where $c_2$ depends on $\ve$.
Using this and $|V|  \lesssim N(\infty) \ll 1$, we conclude that $(V + \lambda H)(t,x^*) >0$ for $t \geq 1$ and $x^* \in \Omega  \backslash \cA_{\ve}$.
Thus the claim \eqref{ulbound1} has been proven.

Now we fix $T=c^{-1}\lambda+1$ and define 
\begin{gather*}
Q:=\left\{(t,x) \, \left| \,  
1 \leq t \leq T, \  0 \leq (V + \lambda H)(t,x) \leq c (t-1) \right\}\right., 
\end{gather*}
where $c$ is a small positive constant to be chosen later. 
From \eqref{ulbound1},  we see that $(V + \lambda H)(t,x)=0$ holds if and only if $x \in \cA$.
The boundary $\partial Q$ has three parts 
(where $t=T$, where $V+\lambda H=0$ and where $V+\lambda H=c(t-1)$.  That is, 
\begin{gather*}
\partial Q=S_{1}\cup S_{2}\cup S_{3},
\end{gather*}
where
\begin{align*}
S_{1}&:=\left\{(t,x) \, \left| \,  1 < t < T, \  x \in \cA \right. \right\} , 
\quad S_{3}:=\{T\}\times \Omega , 
\\
S_{2}&:=\left\{(t,x) \, \left| \,  1 < t < T, \  (V + \lambda H)(t,x) = c (t-1)\right.\right\}.
\end{align*}
From \eqref{Hasp2} and the implicit function theorem, we see that $S_{2}$ is a $C^{1}$-surface.
The surface $S_2$ has the normal vector $[\partial_t V-c,\ \ \nabla(V+\lambda H)]$ in time-space coordinates.
Now 
we multiply the hyperbolic equation \eqref{re1} by $2{R}_i$ and integrate over the domain $Q$ to obtain 
\begin{align*}   
\int \!\! \int_{Q} \partial_t ({R}_i^2)
+ \nabla \cdot \left\{k_i \nabla ({V}+\lambda {H} ) {R}_i^2\right\} \, dxd\tau
&= -k_{i} \int \!\! \int_Q (\Delta {V}+ 2 p\nabla {V} \cdot \nabla {H} -2p\lambda |\nabla {H}|^{2}){R}_i\, dxd\tau
\notag \\
& \lesssim  \int \!\! \int_{Q} {R}_i^2 \, dxd\tau,  
\end{align*}
where we have used the bounds \eqref{ell1} and $H \in W^{1,\infty}(\Omega)$ 
in deriving the inequality.
The left side equals an integral over $\partial Q$.  Because the integral over $S_1$ vanishes,  
we obtain 
\begin{equation}\lb{apes19}
\int_{S_{2}} k_{i} \left\{ |\nabla ({V}+\lambda {H})|^{2} + \partial_{t} V -c\right\}R_i^2  \, ds
+\int_\Omega R_i^2 \left(T,x\right)  \, dx   \lesssim  \int \!\! \int_{Q} {R}_i^2 \, dxd\tau.
\end{equation}
However we see from \eqref{NT1} that $|\partial_{t} V| \ll 1$ if $N(\infty) \ll 1$.
This fact with \er{Hasp2} ensures that
the first term in \er{apes19} is non-negative for $c\le c_0$ for suitably small $c_0$.  
The inequality is valid for all $c_0^{-1}\lambda +1 \le T<\infty$.
Applying Gr\"onwall's inequality, we deduce that $\|R_i(T)\|^2=0$ 
which means $\rho_i(T,x)=0$ for all $x \in \Omega$ and all such $T$.
The proof is complete.        
\end{proof}


\section{Local bifurcation}\lb{S4}
This section is devoted to proving Theorem \ref{thm3} 
which asserts the local bifurcation of non-trivial stationary solutions.
We write the stationary system as 
\begin{equation*}  
\sG_j(\lambda,\rho_i,R_e,V)=0 \qu \text{for $j=1,2,3,$}  
\end{equation*}
where $R_{e}:=\ro_e e^{\frac{\lambda}{2}H}$ and 
\begin{align*}
&\sG_1(\lambda,\rho_i,R_e,V)
:=k_i \nabla \cdot \left\{ \rho_i \nabla(V+\lambda H)\right\}
-k_eh\left(|\nabla (V+\lambda H)|\right)e^{-\frac{\lambda}{2}H}R_e =0,
\\
&\sG_2(\lambda,R_e,V)
:=  -\Delta R_e 
-\nabla V \cdot \nabla R_{e}
+\left\{\frac{\lambda}{2} \nabla V \cdot \nabla H
-\Delta V
+\frac{\lambda^2}{4}|\nabla H|^{2}
-h\left(|\nabla (V+\lambda H)|\right)\right\}R_e =0,
\\
&\sG_3(\lambda,\rho_i,R_e,V)
:=\Delta V-\rho_i+e^{-\frac{\lambda}{2}H}R_e=0.
\end{align*}
This follows directly from the original system \eqref{re1}--\eqref{re3} 
and from $g(s)=ase^{-\frac{b}{s}}-\tfrac{s^{2}}{4}$.
Because the operator $\cG_1$ is not elliptic, we cannot directly apply the classical Crandall-Rabinowitz  Theorem.  
Our alternative goal is to rewrite this system in order to be able to employ a Lyapunov-Schmidt reduction, following the ideas in \cite{KN}.

\subsection{Transport equation}  
In order to handle the hyperbolic equation $\sG_1=0$, we begin with the boundary value problem for the simple linear transport equation
\begin{subequations}\label{TranEq1}
\begin{gather}
\nabla\Phi\cdot\nabla\rho_i + b\rho_i = f \hbox{ in } \Omega, 
\\
\rho_{i} =0 \hbox{ on } \cA.
\end{gather}
\end{subequations}

\begin{lem}     \label{rho_iLemma}
Let $\Phi \in C^{3+\ell,\alpha}(\overline\Omega)$, $b \in C^{1+\ell,\alpha}(\overline\Omega)$, and $\Phi \in C^{2+\ell,\alpha}(\overline\Omega)$ for integers $\ell \geq 0$.
Assume that 
\bqn    \label{ineqsPhi}
\nabla\Phi\ne0 \text{ in } \Omega, \quad 
\bm{n}\cdot \nabla\Phi >0 \text{ on }\cC, \quad
\Phi=0, \ \bm{n}\cdot \nabla\Phi <0 \text{ on }\cA,  
\eqn
where $\bm{n}$ denotes the unit normal vector outward from $\Omega$.
Then there exists a unique solution $\rho_i \in C^{1+\ell,\al}(\overline\Omega)$ of  the boundary value problem \eqref{TranEq1}.
Furthermore,    
\begin{gather}\label{hypes1}
\|\rho_{i}\|_{C^{1+\ell,\al}} \leq  C
   \| f \|_{C^{1+\ell,\al}},
\end{gather}
where 
$C$ is a positive constant that depends on $\inf_{x\in \Omega} |\nabla \Phi(x)|$, $\|\Phi\|_{C^{3+\ell,\al}}$, and $\| b \|_{C^{1+\ell,\al}}$ but is independent of $f$.  

\end{lem}
\begin{proof}
The conditions \eqref{ineqsPhi} mean that $\nabla\Phi$ 
has no critical points in $\Omega$, while it points outward on $\cC$ and inward on $\cA$.  
The characteristics of the transport equation in \eqref{TranEq1} are defined by 
$\frac{dy}{ds}=\nabla\Phi(y)$ with $y(0)=x$ for $x\in\overline\Omega$.    
We denote them by $y(s;x)$.  
Of course, the three expressions in \eqref{ineqsPhi} are not only non-zero 
but are bounded away from zero.  
These assumptions on $\Phi$ imply that each characteristic runs from $\cA$ to $\cC$ 
and the set of characteristics covers all of $\overline\Omega$.  
This is a consequence of the assumption that $\Phi\in C^{3+\ell,\al}(\overline\Omega)$ 
together with standard facts about ODEs.  
On each characteristic $y(s;x)$, it takes the form 
$\frac{d\rho_i}{ds} + b\rho_i = f$, which is easily solved.    
Because $\Phi\in C^{3+\ell,\al}$, $b\in C^{1+\ell,\al}$, and $f\in C^{2+\ell,\al}$,
the unique solution $\rho_i$ belongs to $C^{1+\ell,\al}$.

It remains to show the bound \eqref{hypes1}. In terms of  the characteristic $y(s;x)$ within $\overline\Omega$, the solution $\rho_{i}$ is written as
\begin{gather}\label{form-rhoi}
\rho_i(x)=\int_{s_{a}(x)}^{0} e^{-\int_{s}^{0} b(y(\sigma;x)) d\sigma} f(y(s;x)) ds, \quad x\in \overline{\Omega},
\end{gather}
where the parameter value $s_{a}(x)\leq 0$ is defined so that $y(s_{a}(x);x) \in \cA$. 
To obtain a bound of $|s_{a}(x)|$, we observe that
\begin{gather*}
\Phi(x) - 0 = \int_{s_{a}(x)}^{0} \frac{d}{ds} \Phi(y(s;x)) ds = \int_{s_{a}(x)}^{0}  |\nabla \Phi(y(s;x))|^{2} ds \geq \inf_{\Omega} |\nabla \Phi(x)|^{2} |s_{a}(x)|.
\end{gather*}
This gives $|s_{a}(x)| \leq \|\Phi\|_{C^{3+\ell,\al}} / \inf_{\Omega} |\nabla \Phi|^{2}$ 
and also $\Phi>0$ in $\Omega$.
We note that ${\Phi}(x)=0$ holds if and only if $x \in \cA$ and so represents $\cA$. 
Standard ODE theory with $\Phi\in C^{3+\ell,\al}$ ensures that $y(s;\cdot) \in C^{2+\ell}$ as well as its derivatives are bounded by $\|\Phi\|_{C^{3+\ell,\al}}$.
Let us check the regularity of $s_{a}$.
We consider the equation $\widetilde{\Phi}(\widetilde{y}(s;x))=0$, where $\widetilde{\Phi}$ is a smooth extension of $\Phi$ to the exterior of $\Omega$, and $\widetilde{y}$ denotes the characteristic associated to the extension $\widetilde{\Phi}$.
As above, $\widetilde{\Phi}(x)=0$ represents $\cA$.
Applying the implicit function theorem to $\widetilde{\Phi}(\widetilde{y}(s;x))=0$ at the point $s=s_{a}(x)$,
we see that $s_{a} \in C^{2+\ell}$ as well as its derivatives are bounded by $\|\Phi\|_{C^{3+\ell,\al}}$ and $\inf_{\Omega} |\nabla \Phi|$. 
Making use of these properties in \eqref{form-rhoi}, we arrive at \eqref{hypes1}.
\end{proof}

\subsection{Linearized operator}\label{LO1}
We define the spaces 
\begin{align*}
 X^{\ell}&:=  \{f\in C^{1+\ell,\alpha}(\overline{\Omega});f|_{\cA}=0\}  \times C^{2+\ell,\alpha}_0(\overline\Omega) \times C^{3+\ell,\al}_0(\overline\Omega), 
\\
Y^{\ell}&:= C^{1+\ell,\alpha}(\overline\Omega) \times C^{\ell,\alpha}(\overline\Omega) \times C^{1+\ell,\al}(\overline\Omega), \quad \ell=0,1,
\end{align*}
where the subscript $0$ refers to functions (not their derivatives) that vanish on 
$\partial\Omega = \cA \cup \cC$ 
and $\al\in(0,1)$ is a fixed number indicating H\"older continuity.  
We define the linearized operator    
\begin{align*}
\cL&:= \partial_{(\rho_i,R_e,V)} \sG(\lambda^*,0,0,0)  =  
\begin{bmatrix}
\, k_{i} {\rm div} ( \, \cdot \, \lambda^{*} \nabla  H ) & -k_{e}h(|\lambda^{*} \nabla H|)e^{-\frac{\lambda^{*}}{2}H}   & 0 \,  
\\
\, 0 & -\Delta -g(|\lambda^{*} \nabla H|) & 0 \,
\\
\, -1 & e^{-\frac{\lambda^{*}}{2}H} & \Delta \,
\end{bmatrix}.
\end{align*}
The three rows of $\sL$ are denoted as $\sL_1,\sL_2,\sL_3$.  
Notice that the nullspace of $\sL$ is one-dimensional. 
Indeed, we recall that $\kappa(\lambda^{*})=0$, where $\kappa$ was defined in \eqref{kap0}.
As mentioned earlier, zero is the lowest eigenvalue of the self-adjoint 
operator $A$ on $L^2(\Omega)$ with domain $H^2(\Omega)\cap H^1_0(\Omega)$. 
We denote by $\varphi_e^{*} \in H^2(\Omega)\cap H^1_0(\Omega)$ 
the corresponding positive eigenfunction.  
On fact, $\varphi_e^{*}$ belongs to $C^{3,\al}(\overline{\Omega})$.  
Given $\varphi_e^*$, the other two components $\sL_1=0$ and $\sL_3=0$ have unique solutions 
in $C^{3,\al}(\overline{\Omega})$ and $C^{5,\al}(\overline{\Omega})$, respectively, due to Lemma \ref{rho_iLemma}.  
So the nullspace of $\sL$ is one-dimensional.  
We normalize this eigenfunction by requiring $\| \varphi_i^* \|=\| \varphi_e^* \|=\| \varphi_v^* \|=1$.    
For brevity we denote $\varPhi^{*}:={}^{t}(\varphi_i^*,\varphi_e^*,\varphi_v^*)$.  
We denote by $Q$ the orthogonal projection to the range $R(\cL)$.  
Then $P:=I-Q$ is a one-dimensional projection. 

We claim that the range of $\sL$ is 
$R(\cL)=\{(f_{i},f_e,f_v) \in Y^{0} \, ; \, \langle f_e,\varphi_e^{*} \rangle=0 \}.$  
This will follow directly from 
$R(\sL_{2})=\{f_e \in C^\al(\overline{\Omega}); \langle f_e,\varphi_e^{*} \rangle=0  \}.  $ 
To prove this fact about $\sL_2$ and to make use of duality, we temporarily convert to Hilbert spaces.   
Let $X^\bullet$ be the same as $X^0$ and $Y^\bullet$ the same as $Y^0$, 
except that $C^{k,\al}$ is replaced by $H^k$.  
We define $\sL^{\bullet} :X^{\bullet} \to Y^{\bullet}$ as the unique linear extension 
of $\sL^{\bullet}$ of $\sL$ to $X^{\bullet}$.
Note that $\sL_{2}^{\bullet}$ is a self-adjoint operator and its domain is 
independent of $S_{i}$ and $W$.  
By standard operator theory from the fact that 
$N(\sL_{2}^{\bullet})=\langle \varphi_e^{*} \rangle$, we have 
$R(\sL_{2}^\bullet)=\{ f_e \in L^2(\Omega); \langle f_e,\varphi_e^{*} \rangle=0\}.$
Because $Y\subset Y^\bullet$, the condition $\langle f_e,\varphi_e^{*} \rangle=0$ 
is necessary for the solvability of the problem 
$\sL_{2}^{\bullet}(S_e)=f_{e} \in C^\al(\overline{\Omega})$.
On the other hand, if 
$f_e \in \{f_{e} \in C^\al(\overline{\Omega}) ; \langle f_e,\varphi_e^{*} \rangle=0 \}$,
we have a unique solution $S_{e} \in H^{2}(\Omega) \cap H^{1}_{0}(\Omega)$ 
of $\sL_{2}^\bullet(S_e) =f_{e}$.
But  $S_e \in C^{2,\al}(\overline{\Omega})$ by standard elliptic estimates.
Thus we have determined the range of $\sL_2$.

\subsection{The system, rewritten}
We will write the desired solution $(\lambda(s),\rho_{i}(s),R_{e}(s),V(s))$ of the full nonlinear system 
$\sG=0$ in the form 
\begin{gather*}
\lambda=\lambda^{*}+s \Lambda, \quad \rho_{i}=s (\varphi_{i}^{*}+s S_{i}), \quad R_{e}=s(\varphi_{e}^{*}+s S_{e}), \quad V= { s(\varphi_{v}^{*}+s W) }, \quad  (S_{i },S_{e}, { W}) \in R(Q),
\end{gather*}
and then look for $(\Lambda,S_{i } ,S_{e}, W)$, depending on a small parameter $s$.  

We begin by writing the three components of $\sG=0$ as follows.
\begin{gather*} 
\begin{aligned}
&k_i\nabla\cdot(s(\varphi_i^*+sS_i) \lambda^* \nabla H) 
+ k_i\nabla\cdot \big(  s(\varphi_i^{*}+sS_i) \nabla(s(\varphi_v^{*}+sW)+s\Lambda H)\big)  \\
& \quad - k_eh(|\lambda^*\nabla H|) e^{-\frac{\lambda^*}{2} H} s(\varphi_e^{*}+sS_e)  
- s\Lambda k_e|\nabla H| h'(|\lambda^*\nabla H)|) e^{-\frac{\lambda^*}{2} H} s(\varphi_e^{*}+sS_e) \\
& \quad + s\Lambda k_eh(|\lambda^*\nabla H|)\tfrac H2 e^{-\frac{\lambda^*}{2} H} s(\varphi_e^{*}+sS_e) = s^2 \cR_1,
\end{aligned}
\\
-\Delta(s(\varphi_e^*+sS_e)) - g(|\lambda^*\nabla H|)s(\varphi_e^*+sS_e)  
- s\Lambda|\nabla H| g'(|\lambda^*\nabla H|) s(\varphi_e^* + sS_e)   
=  s^2 \cR_2,
\end{gather*}
and 
\begin{align*}
\Delta(s(\varphi^*_v + sW)  - s(\varphi^*_i+sS_i)  +  e^{-\frac{\lambda^*}{2} H} s(\varphi_e^{*}+sS_e)  
-s\Lambda\tfrac H2 e^{-\frac{\lambda^*}{2} H} s(\varphi_e^{*}+sS_e)   
=   s^2 \cR_3 ,
\end{align*}
where 
					\begin{align*}
\cR_{1} 
& :=  k_eh\left(|\nabla \{ s (\varphi_{v}^{*} +s W)+(\lambda^{*}+s \Lambda ) H \}|\right)e^{-\frac{\lambda^{*}+s \Lambda}{2}H}\frac1s  (\varphi_{e}^{*}+ sS_{e})
\\
& \quad  - k_{e}h(|\lambda^{*} \nabla H|)e^{-\frac{\lambda^{*}}{2}H} \frac1s  (\varphi_{e}^{*}+s S_{e})
\\
&\quad - k_{e} s \Lambda |\nabla H| h'(|\lambda^{*} \nabla H|)e^{-\frac{\lambda^{*}}{2}H} \frac1s  (\varphi_{e}^{*}+s S_{e})
+k_{e} s\Lambda h\left(|\lambda^{*} \nabla  H|\right) \frac{H}{2} e^{-\frac{\lambda^{*}}{2}H}\frac1s  (\varphi_{e}^{*}+s S_{e}),
\\
\cR_{2} 
&:= \nabla (\varphi_{v}^{*} +s W) \cdot \nabla (\varphi_{e}^{*}+s S_{e}) 
-  \left\{ \frac{\lambda^{*}+s \Lambda}{2}  \nabla (\varphi_{v}^{*} +s W) \cdot \nabla H -\Delta(\varphi_{v}^{*} +s W) \right\} (\varphi_{e}^{*}+s S_{e})
\\
& \quad  - \left\{\frac14|(\lambda^{*}+s \Lambda)\nabla H|^{2} -h\left(|\nabla \{ s(\varphi_{v}^{*} +s W)+(\lambda^{*}+s \Lambda) H\} |\right)\right\}\frac1s(\varphi_{e}^{*}+s S_{e})
\\
& \quad - g(|\lambda^{*} \nabla H|) \frac1s (\varphi_{e}^{*}+s S_{e}) - s \Lambda |\nabla H|g'(|\lambda^{*} \nabla H|) \frac1s (\varphi_{e}^{*}+s S_{e}),
\\
\cR_{3}  
&:=-e^{-\frac{\lambda^{*}+s \Lambda}{2}H}\frac1s (\varphi_{e}^{*}+s S_{e}) + e^{-\frac{\lambda^{*}}{2}H}\frac1s (\varphi_{e}^{*}+s S_{e})  - s\Lambda \frac{H}{2} e^{-\frac{\lambda^{*}}{2}H} \frac1s (\varphi_{e}^{*}+s S_{e}) . 
\end{align*}

Dividing by $s^2$, this system can be written more conveniently as 
			\begin{align}      \label{bifurcationEq1}
\cL
\begin{bmatrix}
S_{i } \\ S_{e} \\ W
\end{bmatrix}
+\Lambda \cK 
\begin{bmatrix}
\varphi_{i}^{*}+s S_{i} \\ \varphi_{e}^{*}+s S_{e} \\ \varphi_{v}^{*} +s W
\end{bmatrix}
+ \cN[\nabla(\varphi_{v}^{*} +s W { + \Lambda H})] 
\begin{bmatrix}
\varphi_{i}^{*}+s S_{i} \\ \varphi_{e}^{*}+s S_{e} \\ \varphi_{v}^{*} +s W
\end{bmatrix}
= \cR[s,\Lambda, S_{e},  W],
\end{align}
where 
$\cR:=(\cR_{1},\cR_{2},\cR_{3})$, 
						\begin{align*}  
\cK&:= 
\begin{bmatrix}
\,  0 & -k_{e}|\nabla H| h'(|\lambda^{*} \nabla H|)e^{-\frac{\lambda^{*}}{2}H} +k_{e}  h\left(|\lambda^{*} \nabla  H|\right) \frac{H}{2} e^{-\frac{\lambda^{*}}{2}H} & 0 \,
\\
\, 0 & - |\nabla H|g'(|\lambda^{*} \nabla H|) & 0 \,
\\
\, 0 & -\frac{H}{2} e^{-\frac{\lambda^{*}}{2}H}  & 0 \,
\end{bmatrix}  ,    
\end{align*}
and 
					\begin{align*}
\cN[\nabla u] & :=
\begin{bmatrix}
k_i {\rm div} \left\{ \, \cdot \, \nabla u \right\} & 0 & 0
\\
0 & 0 & 0
\\
0 & 0 & 0
\end{bmatrix}.
\end{align*}

As we shall see later on, it is important to treat the bilinear $\cN$ term separately  because it comes from the hyperbolic equation so that we cannot treat it as a reminder as we did with the nonlinear terms in $\cR$,  due to the loss of regularity of the mapping $v\to\cN[\nabla u]v$.
Now we decompose this equation \eqref{bifurcationEq1} by separately applying the projections $P$ and $Q$, in order to implement the Liapunov--Schmidt procedure.  
As shown in subsection \ref{LO1}, we know that 
\begin{equation*}
R(\cL)=\{(f_{i},f_e,f_v) \in Y^{0} \, ; \, \langle f_e,\varphi_e^{*} \rangle=0 \}.
\end{equation*}
The projection $P$ is the $L^{2}$ projection of the second row (zero on the first and third rows) of  \eqref{bifurcationEq1} onto $\varphi_{e}^{*}$.   
Applying $P$ to  \eqref{bifurcationEq1} we have
\begin{gather*}
\kappa'(\lambda^{*}) \Lambda 
= \left(-\int_{\Omega}  |\nabla H|g'(|\lambda^{*} \nabla H|) (\varphi_{e}^{*})^{2}  dx \right) \Lambda 
=  \big\langle \cR_{2}, \varphi_{e}^{*}  \big\rangle.
\end{gather*}
				Since 
$\kappa'(\lambda^*)\ne 0$, $\Lambda$ is determined by $\cR_2$.  
Furthermore we apply $Q$ to \eqref{bifurcationEq1} and then rearrange to obtain
\begin{align*}
&\cL
\begin{bmatrix}
S_{i } \\ S_{e} \\ W
\end{bmatrix}
+\Lambda Q (\cK { + \cN[\nabla H] }) 
\begin{bmatrix}
\varphi_{i}^{*} \\ \varphi_{e}^{*} \\ \varphi_{v}^{*} 
\end{bmatrix}
+ s Q \cN[\nabla(\varphi_{v}^{*} +s W { + \Lambda H})] 
\begin{bmatrix}
 S_{i} \\  S_{e} \\  W
\end{bmatrix}
\\
&= Q\cR
-s \Lambda Q \cK  
\begin{bmatrix}
 S_{i} \\  S_{e} \\  W
\end{bmatrix}
- Q { \cN[\nabla(\varphi_{v}^{*} +s W)] }
\begin{bmatrix}
\varphi_{i}^{*} \\ \varphi_{e}^{*} \\ \varphi_{v}^{*}
\end{bmatrix},
\end{align*}
where we have used { $QR(\cL)=R(\cL)$}.
Thereby we arrive at the infinite-dimensional equation  
\begin{gather}\label{bifurcationEq2}
M(s,{ \Lambda},U)
\begin{bmatrix}
\Lambda \\ U
\end{bmatrix}
= F(s,\Lambda, U),
\end{gather}
				where we define 
\begin{gather}  
U:=
\begin{bmatrix}
 S_{i} \\  S_{e} \\  W
\end{bmatrix},
\quad
M(s,{ \Lambda},U):=
\begin{bmatrix}
\kappa'(\lambda^{*}) & \bm{0}
\\
Q (\cK { + \cN[\nabla H] })  \varPhi^{*} & \cL + sQ\cN[\nabla(\varphi_{v}^{*} +s W { + \Lambda H})] 
\end{bmatrix},
\\
F(s,\Lambda, U):=
\begin{bmatrix}
\langle \cR_{2}[s, \Lambda, S_{e}, W], \varphi_{e}^{*} \rangle \\ Q\cR[s,\Lambda, S_{e}, W] - s \Lambda Q \cK  U - Q { \cN[\nabla(\varphi_{v}^{*} +s W)] } \varPhi^{*}
\end{bmatrix}.   \label{F(s)}
\end{gather}  

\subsection{Proof of local bifurcation} 
In order to prove the local bifurcation, it suffices to find  $(\Lambda,U)$ which satisfies \eqref{bifurcationEq2} for each $s$ with $|s| \ll 1$.
Recall the spaces $X^{\ell}$ and $Y^{\ell}$.  
We linearize the left side of  \eqref{bifurcationEq2} by writing  
\begin{gather}\label{bifurcationEq3}
M(s,\hat{\Lambda},\hat{U})
\begin{bmatrix}
\Lambda \\ U
\end{bmatrix}
= F(s,\Lambda, U),
\end{gather}
where for a given constant $\hat{\Lambda}$ and a given function $\hat{U} \in QX^{1}$, the linear map $M(s,\hat{\Lambda},\hat{U})$ is defined so that 
\begin{gather*}
M(s,\hat{\Lambda},\hat{U}): 
\sD \to \mathbb R \times QY^{1}.
\end{gather*}
The domain 
of $M(s,\hat\Lambda,\hat{U})$ is given as $\sD = M(s,\hat{\Lambda},\hat{U})^{-1}( \mathbb R \times QY^{1})$, so that $M(s,\hat{\Lambda},\hat{U})$ maps onto $ \mathbb R \times QY^{1}$.   The domain of $M$ for the hyperbolic part (the first of the three components)  is not $C_{0}^{1+\ell,\alpha}$ but only $C_0^{\ell,\alpha}$. 
Note that the domain of $M(s,\hat{\Lambda},\hat{U})$ depends on $(\hat{\Lambda},\hat{U})$.  
The technique will avoid the difficulty that the hyperbolic equation does not regularize.  
In \cite{KN} it was employed to study  the compressible Navier-Stokes equation.

\begin{lem}\label{inverseM1}
For a given constant $K>0$, then there exists $\eta_{1}$ such that if $|s| \leq \eta_{1}$, $\hat{U} \in QX^{1}$, and $|\hat{\Lambda}|+\|\hat{U}\|_{X^{1}} \leq K$, then the following (i) and (ii) hold:
\begin{enumerate}[(i)]
\item $\cL + sQ\cN[\nabla(\varphi_{v}^{*} +s \hat{W}+ \hat{\Lambda} H)]$ has a bounded inverse from  $QY^{\ell}$ to $QX^{\ell}$, and there exists a positive $C_{1}$ depending on $\eta_{1}$ and $K$ such that 
$$ 
\| (\cL + sQ\cN[\nabla(\varphi_{v}^{*} +s \hat{W} + \hat{\Lambda} H)])^{-1} U \|_{X^{\ell}} \leq C_{1} \| U \|_{Y^{\ell}}
$$
for all $U \in QY^\ell$ and $\ell=0,1.$
\item $M(s,\hat{\Lambda},\hat{U})$ has a bounded inverse from 
$\real\times QY^{\ell}$ to $\mathbb R \times QX^{\ell}$
, and there exists a positive constant $C_{2}$ depending on $\eta_{1}$ and $K$  such that
\begin{gather*}
\left\| M(s,\hat{\Lambda},\hat{U})^{-1} 
\begin{bmatrix}
\Lambda \\ U
\end{bmatrix} 
\right\|_{\mathbb R \times X^{\ell}} 
\leq C_{2}
\left \|\begin{bmatrix}
\Lambda \\ U
\end{bmatrix} 
\right\|_{\mathbb R \times Y^{\ell}}, \quad \ell=0,1.
\end{gather*}
\end{enumerate}
\end{lem}
\begin{proof}
In order to prove (i), it suffices to show that 
\begin{gather}\label{cLeqes1}
\|\widetilde{U}\|_{X^{\ell}} \leq C_{1} \|\widetilde{F}\|_{Y^{\ell}}
\end{gather}
for any $\widetilde{U}={}^{t}(\widetilde{S}_{e},\widetilde{S}_{i},\widetilde{W}) \in QX^{\ell}$ which solves 
\begin{gather}\label{cLeq1}
\big(\cL + sQ\cN[\nabla(\varphi_{v}^{*} +s \hat{W}+ \hat{\Lambda} H)]\big) \widetilde{U} = \widetilde{F}:= 
\begin{bmatrix}
\widetilde{f}_{i} \\ \widetilde{f}_{e} \\ \widetilde{f}_{v}
\end{bmatrix}
\in QY^{\ell}.
\end{gather}
First we solve the second row of \eqref{cLeq1}, i.e.
\begin{gather}  \label{tildeSXXX}
-\Delta \widetilde{S}_{e} -g(|\lambda^{*} \nabla H|) \widetilde{S}_{e} =\widetilde{f}_{e}, 
\end{gather}
where $\widetilde{f}_{e}$ satisfies $\langle \widetilde{f}_e,\varphi_e^{*} \rangle=0$.
Recall that zero is the lowest eigenvalue of the self-adjoint 
operator $A$ on $L^2(\Omega)$ with domain $H^2(\Omega)\cap H^1_0(\Omega)$ as well as
$\varphi_e^{*}$ is the corresponding positive eigenfunction.  
Therefore standard elliptic theory ensures the solvability of \eqref{tildeSXXX} and the estimate
$\|\widetilde{S}_{e}\|_{C^{2+\ell,\alpha}} \leq C  \|\widetilde{F}\|_{Y^{\ell}}$,
where $C>0$ is a constant independent of $\widetilde{F}$.
Next, by taking $|s|$ small enough  subject to $K$ and applying Lemma \ref{rho_iLemma} with \eqref{Hasp1}, we solve the first row of \eqref{cLeq1}, i.e.
\begin{gather*}
k_{i} {\rm div} ( \widetilde{S}_{i} \lambda^{*} \nabla  H ) 
+ k_{i} {\rm div} ( \widetilde{S}_{i} \nabla (s \varphi_{v}^{*} +s^{2} \hat{W}+ s \hat{\Lambda} H) ) 
 = k_{e}h(|\lambda^{*} \nabla H|)e^{-\frac{\lambda^{*}}{2}H}\widetilde{S}_{e}+\widetilde{f}_{i}.
\end{gather*}
Furthermore, there holds that
$\|\widetilde{S}_{i}\|_{C^{1+\ell,\alpha}} \leq C  \|\widetilde{F}\|_{Y^{\ell}}$.
By standard elliptic theory it is easy to solve the third row of \eqref{cLeq1}, i.e.
\begin{gather*}
\Delta \widetilde{W} = - \widetilde{S}_{i} + e^{-\frac{\lambda^{*}}{2}H}  \widetilde{S}_{e}+\widetilde{f}_{v}
\end{gather*}
and obtain
$\|\widetilde{W}\|_{C^{3+\ell,\alpha}} \leq C  \|\widetilde{F}\|_{Y^{\ell}}$.
Thus \eqref{cLeqes1} holds. 

Assertion (ii) follows from (i) since $\kappa'(\lambda^{*})>0$ holds due to the definition of the sparking voltage $\lambda^*$ and also due to the inverse of $M$ being explicitly written as 
\begin{align*}
M(s,\hat{\Lambda},\hat{U})^{-1} =\begin{bmatrix}
1/\kappa'(\lambda^{*}) & \bm{0}
\\  \widetilde{M}_{21} & \big(\cL + sQ\cN[\nabla(\varphi_{v}^{*} +s \hat{W}+\Lambda H)]\big)^{-1}
\end{bmatrix},
\end{align*}
where
\begin{align*}
\widetilde{M}_{21}:=-\big(\cL + sQ\cN\big[\nabla(\varphi_{v}^{*} +s \hat{W}+\Lambda H)\big]\big)^{-1}
 \big(Q (\cK + \cN[\nabla H] )  \varPhi^{*} \big)/\kappa'(\lambda^{*}).
\end{align*}The proof is complete.
\end{proof}

\begin{lem}\label{inverseM2}
Recall the definition \eqref{F(s)} of $F$.
For a given constant $K>0$, there exists $\eta_{2}$ such that if $|s| \leq \eta_{2}$, $U_{j} \in QX^{1}$, and $|\Lambda_{j}|+\|U_{j}\|_{X^{1}} \leq K$ for $j=1,2$, there holds the following estimates:
\begin{gather}
\| F(s,\Lambda_{1}, U_{1}) - F(0,0, 0)  \|_{\mathbb R \times Y^{1}} \leq C_{K}|s|,
\label{Fes1}\\
\| F(s,\Lambda_{1}, U_{1}) - F(s,\Lambda_{2}, U_{2})  \|_{\mathbb R \times Y^{0}} \leq C_{K}|s|(|\Lambda_{1}-\Lambda_{2}|+\|U_{1}-U_{2}\|_{X^{0}}),
\label{Fes2}
\end{gather}
where $C_{K}>0$ is a constant depending on $K$ but independent of $s$.
\end{lem}
\begin{proof}
We show only the estimate \eqref{Fes2}, since \eqref{Fes1} is very similar.
It is clear that 
\begin{gather*}
\| s \Lambda_{1} Q \cK  U_{1} - s \Lambda_{2} Q \cK  U_{2} \|_{Y^{0}} \le C K |s| (|\Lambda_{1} - \Lambda_{2}|+\|U_{1}-U_{2}\|_{Y^{0}}),
\\
\| Q { \cN[\nabla(\varphi_{v}^{*} +s W_{1})] } \varPhi^{*} - 
{ Q\cN[\nabla(\varphi_{v}^{*} +s W_{2})] } \varPhi^{*}\|_{Y^{0}} 
 \le C|s| \|W_1-W_2\|_{C^{3,\alpha}}    
\le C |s| \|U_{1}-U_{2}\|_{X^{0}}.   
\end{gather*}
What is left  is to estimate the difference $\cR[s, \Lambda_{1}, S_{e1}, W_{1}] - \cR[s, \Lambda_{2}, S_{e2}, W_{2}] $.
We treat only the difference of the first row, since the other two rows can be handled similarly. 
We write $\cR_{1}[s, \Lambda, S_{e}, W] $ as
\begin{align*}
\cR_{1}&=
k_e\left\{ h\left(|\nabla \{ s (\varphi_{v}^{*} +s W)+(\lambda^{*}+s \Lambda ) H \}|\right) -  h\left(|\nabla \{ (\lambda^{*}+s \Lambda ) H \}|\right) \right\}
e^{-\frac{\lambda^{*}+s \Lambda}{2}H}\tfrac1s  (\varphi_{e}^{*}+ sS_{e})
\\
& \quad + \left\{ \widetilde{h}(x,s\Lambda) - \widetilde{h}(x,0) - s \Lambda (\partial_{\sigma }\widetilde{h})(x,0)  \right\} \tfrac1s  (\varphi_{e}^{*}+ sS_{e}),
\end{align*}
where $\widetilde{h}(x,\sigma):= h\left(|\nabla \{ (\lambda^{*}+\sigma ) H \}|\right) e^{-\frac{\lambda^{*}+\sigma}{2}H}$.
By Taylor's theorem for small $|s|$ we expand 
\begin{gather*}
h\left(|\nabla \{ s (\varphi_{v}^{*} +s W)+(\lambda^{*}+s \Lambda ) H \}|\right) -  h\left(|\nabla \{ (\lambda^{*}+s \Lambda ) H \}|\right) = O(s) (\varphi_{v}^{*} +s W),
\\
\widetilde{h}(x,s\Lambda) - \widetilde{h}(x,0) - s \Lambda (\partial_{\sigma }\widetilde{h})(x,0) =O(s^{2})\Lambda^{2}.
\end{gather*}
Furthermore,  in $\cR_{1}$ all $\Lambda_{j}$ and $U_{j}$ appear as products with $s$. 
So a direct computation yields
\begin{gather*}
\| \cR_{1}[s, \Lambda_{1}, S_{e1}, W_{1}] - \cR_{1}[s, \Lambda_{2}, S_{e2}, W_{2}] \|_{Y^{0}} \leq C_{K} |s| (|\Lambda_{1} - \Lambda_{2}|+\|U_{1}-U_{2}\|_{Y^{0}}).
\end{gather*}
The proof is complete.
\end{proof}

\begin{proof}[Proof of Theorem \ref{thm3}]
We define a sequence $(\Lambda^{n},U^{n})$ as follows.   For $n=1$, it is a solution of 
\begin{gather*}
M(0,0,0)
\begin{bmatrix}
\Lambda^{1} \\ U^{1}
\end{bmatrix}
= F(0,0, 0).
\end{gather*}
For $n \geq 2$, it is a solution of 
\begin{gather}\label{iteration1}
M(s,\Lambda^{n-1},U^{n-1})
\begin{bmatrix}
\Lambda^{n} \\ U^{n}
\end{bmatrix}
= F(s,\Lambda^{n-1}, U^{n-1}).
\end{gather}

First we show that the sequence is well-defined and uniformly bounded in $n$.  
From Lemma \ref{inverseM1}, we see that $(\Lambda^{1},U^{1})$ is well-defined and satisfies
\begin{gather*}
|\Lambda^{1}|+\|U^{1}\|_{X^{1}} \leq C_{2}\|F(0,0, 0)\|_{\mathbb R \times Y^{1}}.
\end{gather*}
We set $K:=2C_{2}\|F(0,0, 0)\|_{\mathbb R \times Y^{1}}$ and apply Lemmas \ref{inverseM1} and \ref{inverseM2} inductively as follows.
Suppose that $|\Lambda^{n-1}|+\|U^{n-1}\|_{X^{1}} \leq  K$ holds.
Then $(\Lambda^{n},U^{n})$ is well-defined due to Lemma \ref{inverseM1}.
				We write 
\begin{gather*}
M(s,\Lambda^{n-1},U^{n-1})
\begin{bmatrix}
\Lambda^{n} \\ U^{n}
\end{bmatrix}
=  F(0,0, 0) + \{F(s,\Lambda^{n-1}, U^{n-1})-F(0,0, 0)\}.
\end{gather*}
Applying $M^{-1}$ to this, treating the first term and the bracketed term separately using Lemmas \ref{inverseM1} and \ref{inverseM2}, and letting $s$ be sufficiently small, we have
\begin{gather*}
|\Lambda^{n}|+\|U^{n}\|_{X^{1}} \leq \frac{K}{2} + C_2KC_{K} |s|  \leq  K.
\end{gather*}
Thus 
$(\Lambda^{n},U^{n})$ is well-defined for all $n\ge 1$ and satisfies 
\begin{gather}\label{bound1}
|\Lambda^{n}|+\|U^{n}\|_{X^{1}} \leq  K.
\end{gather}

Our main task is to show that $\{(\Lambda^{n},U^{n})\}$ is a Cauchy sequence in $\mathbb R \times X^{0}$.  
Note the decrease in regularity.
It is straightforward to see from  \eqref{iteration1}  and the definition of $M$ that
\begin{align*}
&M(s,\Lambda^{n},U^{n})
\begin{bmatrix}
\Lambda^{n+1} - \Lambda^{n} \\ U^{n+1} - U^{n}
\end{bmatrix}
+
\begin{bmatrix}
0 & \bm{0}
\\
\bm{0} & s Q\cN\big[\nabla ( s(W^{n} - W^{n-1}) + (\Lambda^{n}-\Lambda^{n-1})H) \big] 
\end{bmatrix}
\begin{bmatrix}
\Lambda^{n} \\ U^{n}
\end{bmatrix}
\\
&=
F(s,\Lambda^{n}, U^{n})-F(s,\Lambda^{n-1}, U^{n-1}).
\end{align*}
Applying $(M(s,\Lambda^n,U^n))^{-1}$ to this, using Lemmas \ref{inverseM1} with $\ell=0$ and \ref{inverseM2}, and letting $s$ be sufficiently small, we have 
\begin{align*}
&|\Lambda^{n+1}-\Lambda^{n}|+\|U^{n+1}-U^{n}\|_{X^{0}} 
\\
&\leq C_2C_{K} |s| (|\Lambda^{n}-\Lambda^{n-1}|+\|U^{n}-U^{n-1}\|_{X^{0}} )
\\
&\qquad  +  C_2\|s\cN \left[ \nabla\left\{s(W^n-W^{n-1}) + (\Lambda^n-\Lambda^{n-1}) H\right\}\right] U^n \|_{Y^0}  
\\
&\leq C_2C_{K} |s| (|\Lambda^{n}-\Lambda^{n-1}|+\|U^{n}-U^{n-1}\|_{X^{0}} ) 
\\
&\qquad  +  C_2|s|\left\{ |s|k_i\| {\rm div} \{ S_i^n \nabla(W^n-W^{n-1})\} \|_{ C^{1,\alpha}}  
+k_i |\Lambda^n-\Lambda^{n-1}|\| {\rm div} ( S_i^n \nabla H) \|_{C^{1,\alpha}}   \right\}
\\
&\leq \tilde C_{K} |s| (|\Lambda^{n}-\Lambda^{n-1}|+\|U^{n}-U^{n-1}\|_{X^{0}} )
\\
&\leq \tfrac12 (|\Lambda^{n}-\Lambda^{n-1}|+\|U^{n}-U^{n-1}\|_{X^{0}} )
\end{align*}
because of
\begin{align*}
 \|{\rm div} \{S_i^n \nabla(W^n-W^{n-1}) \}\|_{C^{1,\alpha}}  
 \lesssim \|S_i^n\|_{C^{2,\alpha}}  \|W^n-W^{n-1}\|_{C^{3,\alpha}} 
 \lesssim  \|U^n\|_{X^1} \|U^n-U^{n-1}\|_{X^0} 
\end{align*}
and the bound \eqref{bound1}.   
Hence $\{(\Lambda^{n},U^{n})\}$ is a Cauchy sequence in $\mathbb R \times X^{0}$.
For each small $s$, there exists $(\Lambda,U) \in \mathbb R \times X^{0}$ such that 
$(\Lambda^{n},U^{n})$ converges to $(\Lambda,U)$ in $\mathbb R \times X^{0}$ as $n \to \infty$.
It is easy to see that this limit $(\Lambda,U)$ satisfies the infinite-dimensional equation \eqref{bifurcationEq2}.
The proof is complete.
\end{proof}


\section{Global bifurcation for the radial cases}     \lb{S5}
In this section, our purpose is to prove Theorem \ref{thm4}.  The proof has two parts.  
First we will apply a general global bifurcation theorem and then we will make use of the positivity of the densities.  
Here we are forced to assume that the domain $\Omega$ is either a circular annulus $\{(x,y) \in \mathbb R^{2}  ;   r_{1}^{2}<r^{2}:=x^{2} + y^{2}<r_{2}^{2} \}$ or a spherical shell $\{(x,y,z) \in \mathbb R^{3}  ;   r_{1}^{2}<r^{2}:=x^{2} + y^{2} + z^{2}<r_{2}^{2} \}$.
The theory of global bifurcation goes back to Rabinowitz \cite{Ki1,Rab}
 using topological degree.  
 A different version using analytic continuation 
 goes back to Dancer \cite{BST1,Da1}. 
The specific version that is most convenient to use here is Theorem 6 in \cite{CS1}, 
which is the following.  

\begin{thm}[\! \cite{CS1}]\lb{Global0}
Let $X$ and $Y$ be Banach spaces,
$\sO$ be an open subset of ${\mathbb R}\times X$ and $\sF:\sO \to Y$ be a real-analytic function. 
Suppose that
\begin{enumerate}[{(H}1{)}]
\item $(\lambda,0)\in \sO$ and $\sF(\lambda,0)=0$ for all $\lambda \in \mathbb R$;
\item for some $\lambda^* \in \mathbb R$, $N(\partial_u\sF(\lambda^*,0))$ and 
$Y\backslash R(\partial_u\sF(\lambda^*,0))$ are one-dimensional, 
with the null space generated by $u^*$, which satisfies  the transversality condition
\[
\partial_{\la,u}^2\sF(\lambda^*,0)(1,u^*)\notin R(\partial_u\sF(\lambda^*,0)),  
\]
where $\partial_u$ and $\partial_{\la,u}^2$ mean Fr\'echet derivatives for $(\la,u) \in \sO$,
and $N(\sL)$ and $R(\sL)$ denote the null space and range of a linear operator $\sL$ between
two Banach spaces;
\item $\partial_u\sF(\lambda,u)$ is a Fredholm operator of index zero, for any $(\la,u) \in \sO$
that satisfies the equation $\sF(\lambda,u)=0$;
\item for some sequence $\{\sO_j\}_{j\in \mathbb N}$ of bounded closed subsets of $\sO$ with
$\sO=\cup_{j\in \mathbb N} \sO_j$, 
the set $\{(\la,u) \in \sO ;\sF(\lambda,u)=0\}\cap \sO_j$ is compact for each $j\in\mathbb N$.
\end{enumerate}

Then there exists in $\sO$ a continuous curve 
${\sK}=\{(\la(s),u(s));s \in \mathbb R\}$
of $\sF(\lambda,u)=0$ such that:
\begin{enumerate}[{(C}1{)}]
\item $(\la(0),u(0))=(\la^*,0)$;
\item $u(s)=su^*+o(s)$ in $X$  as $s \to 0$;
\item there exists a neighborhood $\sW$ of $(\la^*,0)$
and $\ve > 0$ sufficiently small such that
\begin{equation*}
\{(\la,u)\in \sW ; u\neq 0 \text{ and } \sF(\la,u)=0 \}
=\{(\la(s),u(s)) ; 0<|s|<\ve\};
\end{equation*}
\item $\sK$ has a real-analytic reparametrization locally around each of its points;
\item one of the following alternatives occurs:
\begin{enumerate}[(I)]
\item for every $j\in\mathbb N$, there exists $s_j>0$ such that $(\la(s),u(s))\notin \sO_j$
for all $s \in \mathbb R$ with $|s|>s_j$;
\item there exists $T>0$ such that
$(\la(s),u(s))=(\la(s+T),u(s+T))$ for all $s \in \mathbb R$.
\end{enumerate}
\end{enumerate}
Moreover, such a curve of solutions of $\sF(\lambda,u)=0$
having the properties (C1)-(C5) is unique (up to reparametrization).
\end{thm}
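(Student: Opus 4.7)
This is the Dancer–Buffoni–Toland type real-analytic global bifurcation theorem. The plan is to combine three ingredients: (a) a Crandall–Rabinowitz local bifurcation at $(\la^{*},0)$ from (H1)–(H2) to produce the starting arc and establish (C1)–(C3); (b) a real-analytic continuation of this arc to a maximal curve $\sK$, using the analytic implicit function theorem at regular points and a Lyapunov–Schmidt reduction at the exceptional points where $\partial_{u}\sF$ fails to be invertible (this gives (C4)); (c) the topological alternative (C5) extracted from the compactness hypothesis (H4) together with the Fredholm index zero assumption (H3). Uniqueness then follows from a standard open-and-closed connectedness argument along the analytic curve.

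\emph{Step 1: Local bifurcation.} Under (H1)–(H2), the Crandall–Rabinowitz theorem (as in Theorem \ref{thm3} above) gives an analytic local curve $\{(\la(s),u(s)):|s|<\ve_{0}\}\subset\sF^{-1}(0)$ with $\la(0)=\la^{*}$, $u(s)=su^{*}+o(s)$ in $X$, and the local uniqueness statement (C3). Real-analyticity of the parametrization is obtained by applying the analytic implicit function theorem (not just the $C^{k}$ version) to the Lyapunov–Schmidt reduced equation, which is possible because $\sF$ is real-analytic. This establishes (C1)–(C3) and the local version of (C4).

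\emph{Step 2: Global analytic continuation.} I would extend the local arc to a maximal analytic curve $\sK$ as follows. Call $(\la_{0},u_{0})\in\sF^{-1}(0)$ a \emph{regular} point if $\partial_{u}\sF(\la_{0},u_{0})$ is an isomorphism, and \emph{singular} otherwise; at singular points Fredholm index zero forces $\dim N(\partial_{u}\sF(\la_{0},u_{0}))=\mathrm{codim}\, R(\partial_{u}\sF(\la_{0},u_{0}))\ge 1$. At a regular point the analytic implicit function theorem locally represents $\sF^{-1}(0)$ as an analytic graph $u=u(\la)$, hence as an analytic curve. At a singular point, a finite-dimensional Lyapunov–Schmidt reduction converts the equation to $\Psi(\la,\xi)=0$ with $\Psi$ real-analytic on a finite-dimensional space; by Weierstrass preparation and the resolution of real-analytic singularities via Puiseux expansions, the zero set near the point consists of finitely many analytic arcs, and the one that continues the incoming piece of $\sK$ is selected by matching tangents. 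Glueing regular pieces and the distinguished analytic arcs through singular points yields a maximal analytic curve $\sK$, unique up to reparametrization. A crucial consequence of real-analyticity is that singular points accumulate only at $\pm\infty$ along $\sK$: on any compact subarc they form a finite set.

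\emph{Step 3: Alternative and uniqueness.} Parametrize $\sK$ by an analytic parameter $s$ on a maximal interval. If, for every $j\in\mathbb N$, the curve eventually leaves $\sO_{j}$ in both directions, alternative (I) holds. Otherwise there exist some $j_{0}$ and $s_{n}\to\infty$ (say) with $(\la(s_{n}),u(s_{n}))\in\sO_{j_{0}}$; hypothesis (H4) says $\sF^{-1}(0)\cap\sO_{j_{0}}$ is compact, so a subsequence converges to a limit $(\la_{\infty},u_{\infty})\in\sF^{-1}(0)$. Applying the local analytic description of Step 2 at this limit, the accumulating points must lie on one of the finitely many analytic arcs through $(\la_{\infty},u_{\infty})$, and the uniqueness of analytic continuation forces that arc to be a piece of $\sK$. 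Hence $\sK$ revisits the neighborhood of $(\la_{\infty},u_{\infty})$ along its own trace, and analyticity of the parametrization upgrades this to genuine periodicity of $(\la(\cdot),u(\cdot))$, which is alternative (II). Uniqueness of $\sK$ satisfying (C1)–(C5) follows because any other such curve agrees with $\sK$ in a neighborhood of $(\la^{*},0)$ by Step 1, and the set of parameters where the two coincide is both open (by the analytic implicit function theorem at regular points and by the selection in Step 2 at singular ones) and closed by continuity, hence equals the entire parameter interval.

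\emph{Main obstacle.} The subtle step is Step 2 at singular points: one must prove that the curve continues uniquely through a point where $\partial_{u}\sF$ degenerates, even though the local zero set may contain several analytic branches. The resolution rests on the analytic Lyapunov–Schmidt reduction combined with the Puiseux description of real-analytic varieties, and on a distinguished selection of the continuing branch by tangent matching; without real-analyticity (merely $C^{\infty}$) this selection need not exist and the theorem can fail. A related subtle point, in Step 3, is upgrading mere accumulation along $\sK$ to true periodicity; this again uses real-analyticity in an essential way, since a real-analytic map $\mathbb R\to\mathbb R\times X$ whose image accumulates to one of its own points along a distinguished analytic arc must in fact be periodic.
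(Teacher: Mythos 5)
The paper does not prove Theorem \ref{Global0}: it quotes it verbatim from Constantin--Strauss--Varvaruca \cite{CS1}, whose proof in turn rests on the Dancer/Buffoni--Toland theory of analytic global bifurcation \cite{Da1,BST1}. Your three-step outline (Crandall--Rabinowitz at $(\lambda^*,0)$, analytic continuation to a maximal curve, compactness dichotomy for the alternative) follows that same route, so the overall strategy is the one actually used in the literature. Two of your crucial steps, however, are not correct as stated and conceal the real work.

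First, continuation through singular points ``by tangent matching'' is not a valid selection rule. At a solution point where $\partial_u\sF$ fails to be an isomorphism, hypothesis (H3) and Lyapunov--Schmidt reduce the germ of $\sF^{-1}(0)$ to the zero set of a real-analytic map between finite-dimensional spaces of dimensions $n+1$ and $n$, a one-dimensional real-analytic variety; by the structure theorem this germ is a finite union of injective analytic arcs through the point. These arcs may share the same tangent line (tangency to arbitrary finite order, or a pitchfork whose two symmetric branches are mutually tangent), so comparing tangents does not single out an outgoing branch. In \cite{BST1} the distinguished curve is defined globally, as a maximal route satisfying an extension property, and its local behaviour at singular points is a consequence of that global construction, not a pointwise tangent choice. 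As written, your Step 2 would fail.

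Second, the assertion in Step 3 that accumulation of $(\lambda(s_n),u(s_n))$ at a limit point, together with analyticity of the parametrization, ``upgrades this to genuine periodicity'' is precisely the hardest part of the theorem and does not follow from analyticity alone. One must show that an entire tail of the curve coincides with a previously traversed sub-arc, that only finitely many singular points lie on any compact piece of $\sK$, and that the curve cannot spiral toward a singular point without returning along itself; only after these facts (which again use (H3)--(H4) and the finite-arc decomposition) can one produce an honest translation $s\mapsto s+T$. That argument occupies a substantial portion of \cite{BST1} and of the appendix in \cite{CS1}, and cannot be dispatched by the remark that an analytic map accumulating on its own trace is periodic, which is false without the extra structure provided by (H3)--(H4).
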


To apply the theorem to our situation, we write the stationary system as
\begin{equation}\lb{sp0}
\sF_j(\lambda,\rho_i,R_e,V)=0 \qu \text{for $j=1,2,3,$}  
\end{equation}
where $R_{e}:=\ro_e e^{\frac{\lambda}{2}H}$ and we denote 
$\cF = (\cF_1,\cF_2, \cF_3) = (\sG_1,\sG_2, \sG_3)$, where (as above)
\begin{align*}
\sF_1(\lambda,\rho_i,R_e,V)
:=&k_i \nabla \cdot \left\{ \rho_i \nabla(V+\lambda H)\right\}
-k_eh\left(|\nabla (V+\lambda H)|\right)e^{-\frac{\lambda}{2}H}R_e,
\\
\sF_2(\lambda,\rho_i,R_e,V)
:=&-\Delta R_e 
-\nabla V \cdot \nabla R_{e}
+\left\{\frac{\lambda}{2} \nabla V \cdot \nabla H
-\Delta V
+\frac{\lambda^2}{4}|\nabla H|^{2}
-h\left(|\nabla (V+\lambda H)|\right)\right\}R_e,
\\
\sF_3(\lambda,\rho_i,R_e,V)
:=&\Delta V-\rho_i+e^{-\frac{\lambda}{2}H}R_e.
\end{align*}
Furthermore, we define the two spaces of radial functions: 
\begin{align*}
X: \ &\rho_i \in \{f\in C^1_{r}(\overline{\Omega});f|_{\cA}=0\}, \ \ R_e \in \{f\in C^2_{r}(\overline{\Omega});f|_{\cC}=f|_{\cA}=0\},
\\
& V \in \{f\in C^3_{r}(\overline{\Omega});f|_{\cC}=f|_{\cA}=0 \};
\\
Y: \ &\sF_1 \in C^0_{r}(\overline{\Omega}), \ \ \sF_2 \in C^0_{r}(\overline{\Omega}),
\ \ \sF_3 \in C^1_{r}(\overline{\Omega}),
\end{align*}  
where $C^k_{r}(\overline{\Omega}):=\{f \in C^{k}(\overline{\Omega}) ; \text{$f$ is radial} \}$ for $k=0,1,2,3$.
Note that if $f$ and $\bm{u}=(u_{1},\ldots,u_{d})$ are radial for $d=2,3$, then the following holds:
\begin{gather}\label{dop1}
\Delta f= \frac{1}{r^{d-1}} \partial_{r} (r^{d-1} \partial_{r} f ),
\quad
\nabla f= \frac{x}{r}\partial_{r} f, 
\quad
\nabla \cdot \bm{u}= \frac{1}{r^{d-1}} \partial_{r} (r^{d-1} u_{1}),
\quad
\text{where $r^{2}:= \sum_{i=1}^{d} |x_{i}|^{2}$.}
\end{gather}
We also use the sets 
\begin{align*}  
\sO:=&\{(\lambda,\rho_i,R_e,V)\in (0,\infty)\times X;\ |\nabla (V+\lambda H)|>0\} \ 
=\ \bigcup_{j\in \bbn} \sO_j ,\text{ where } 
\\
\sO_j:=&\{(\lambda,\rho_i,R_e,V)\in (0,\infty)\times X; \ 
\lambda+\|(\rho_i,R_e,V)\|_{X}\leq j ,  \  
\lambda\geq \tfrac1j,  \ |\nabla (V+\lambda H)| \geq \tfrac1j\}.
\end{align*}
Note that $\sO$ is an open set and each $\sO_j$ is a closed bounded subset of $\sO$.   

\subsection{Application of Theorem \ref{Global0}}
In this subsection, we apply Theorem \ref{Global0} to our stationary problem. 
Hypothesis (H1) is obvious.  
The conditions (H2)--(H4) are validated in Lemmas \ref{null1}--\ref{cpt1} below, respectively.

\begin{lem}\lb{null1} 
Let $\lambda^*>0$ be defined by \er{bp1}, 
and $\sL= \partial_{(\rho_i,R_e,V)}\sF(\lambda^{*},0,0,0)$ be the linearized operator around a trivial solution. 
Then 

(i) The nullspace $N(\sL)$ is one-dimensional and generated by $u^*=(\varphi_i^*,\varphi_e^*,\varphi_v^*)$,
where $\varphi_i^*$ and $\varphi_e^*$ are positive functions. 
 
(ii) The quotient space $Y\backslash R(\sL)$ is one-dimensional. 

(iii) The following transversality condition is valid: 
\begin{gather*}
\partial_\la \partial_{(\rho_i,R_e,V)}  \sF(\lambda^{*},0,0,0)[1,\varphi_i^*,\varphi_e^*,\varphi_v^*]
\notin R(\sL).
\end{gather*}
\end{lem}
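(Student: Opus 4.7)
The proof follows the framework used in Theorem \ref{thm3}, now carrying $\rho_i$ as an independent unknown. First I would compute the linearization $\sL=\partial_{(\rho_i,R_e,V)}\sF(\lambda^*,0,0,0)$ componentwise. Because every coupling between $(\rho_i,R_e,V)$ in $\sF_1,\sF_2,\sF_3$ is at least quadratic, the first derivatives at the trivial state collapse to
\begin{align*}
\sL_1(S_i,S_e,W) &= k_i\,\nabla\!\cdot\!\bigl(S_i\,\nabla(\lambda^* H)\bigr)-k_e h(|\lambda^*\nabla H|)e^{-\lambda^* H/2}\,S_e,\\
\sL_2(S_i,S_e,W) &= -\Delta S_e - g(\lambda^*|\nabla H|)\,S_e \;=\; A\,S_e,\\
\sL_3(S_i,S_e,W) &= \Delta W - S_i + e^{-\lambda^* H/2}\,S_e.
\end{align*}
The key structural observation is that $\sL_2$ involves only $S_e$ and $\sL_1$ does not involve $W$, so $\sL$ is triangular in the order $S_e\to S_i\to W$.

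For assertion (i), I would first solve $AS_e=0$. The definition \eqref{bp1} gives $\kappa(\lambda^*)=0$ as the simple lowest eigenvalue of the self-adjoint operator $A$, with positive principal eigenfunction $\varphi_1$; since $A$ commutes with rotations on the annulus or shell, $\varphi_1$ may be chosen radial, and $S_e=c\,\varphi_e^*$ with $\varphi_e^*\in C^{2,\alpha}_r(\overline\Omega)$ positive by elliptic regularity. Substituting $S_e=\varphi_e^*$ into $\sL_1=0$ and rewriting via \eqref{dop1} reduces the transport equation to a linear first-order ODE in $r$ whose leading coefficient is $\lambda^*H'(r)\neq0$ by \eqref{Hasp1}; integrating from the boundary where $\rho_i=0$ is imposed yields a unique $\varphi_i^*\in C^{1,\alpha}_r(\overline\Omega)$, positive because the forcing is a positive multiple of $\varphi_e^*$. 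The Dirichlet Poisson problem $\sL_3=0$ then has a unique radial solution $\varphi_v^*\in C^{3,\alpha}_{r}(\overline\Omega)\cap C_0$. This gives $N(\sL)=\spn\{u^*\}$ with $u^*=(\varphi_i^*,\varphi_e^*,\varphi_v^*)$.

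For (ii), given $(f_1,f_2,f_3)\in Y$ I would invert $\sL$ in the same order. The self-adjoint Fredholm alternative for $A$ says that $AS_e=f_2$ is solvable in $C^{2,\alpha}_r(\overline\Omega)$ iff $\langle f_2,\varphi_e^*\rangle=0$; the ODE argument then delivers a unique $S_i\in C^{1,\alpha}_r$ satisfying $\sL_1(\cdot,S_e,0)=f_1$ and the prescribed zero boundary condition, with \emph{no} additional compatibility on $f_1$; and the Poisson problem for $W$ with source $S_i-e^{-\lambda^* H/2}S_e-f_3$ is uniquely solvable in the radial H\"older class. Hence
\[
R(\sL)=\{(f_1,f_2,f_3)\in Y:\langle f_2,\varphi_e^*\rangle=0\},
\]
which is closed of codimension one. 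For (iii), only the coefficient $g(\lambda|\nabla H|)$ in $\sL_2$ contributes after pairing with $\varphi_e^*$; differentiating in $\lambda$ at $\lambda^*$ gives the second component $-g'(\lambda^*|\nabla H|)|\nabla H|\,\varphi_e^*$, and its $L^2$-pairing with $\varphi_e^*$ is precisely the Hellmann--Feynman formula for $\kappa'(\lambda^*)$, which is strictly negative by \eqref{bp1}. The characterization of $R(\sL)$ obtained in (ii) then rules out $\partial_\lambda\partial_{(\rho_i,R_e,V)}\sF(\lambda^*,0,0,0)[1,u^*]$ from the range.

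The principal obstacle is the treatment of the non-elliptic transport piece $\sL_1$: one must simultaneously establish unique $C^{1,\alpha}_r$-solvability, positivity of $\varphi_i^*$, and the absence of any Fredholm compatibility condition on $f_1$. Radiality is decisive here, because it collapses $\sL_1$ to a regular first-order linear ODE in the single variable $r$ with nonvanishing leading coefficient $\lambda^*H'(r)$; in a general domain the same step would proceed only along characteristics (as in Lemma \ref{rho_iLemma}) and Fredholmness of $\sL$ on the H\"older space $Y$ would require substantially more care, which is why the global bifurcation result is restricted to the radial geometry.
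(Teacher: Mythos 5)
Your proposal is correct and follows essentially the same route as the paper: compute the block-triangular linearization, use that $\kappa(\lambda^*)=0$ is the simple ground eigenvalue of $A$ with radial positive eigenfunction $\varphi_e^*$, solve the transport equation $\sL_1=0$ explicitly along $r$ using $\partial_r(\lambda^*H)\ne 0$ to obtain $\varphi_i^*>0$ (the paper writes the integral formula \eqref{varphi_i} directly, you argue from positivity of the forcing), invert the Poisson equation for $\varphi_v^*$, characterize the range by the single orthogonality condition $\langle f_e,\varphi_e^*\rangle=0$ via the self-adjoint Fredholm alternative transferred to H\"older classes by elliptic regularity, and verify transversality by recognizing the $\lambda$-derivative paired with $\varphi_e^*$ as the Hellmann--Feynman expression $\kappa'(\lambda^*)<0$. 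The only differences are cosmetic: you write $\sL_1$ in divergence form (equivalent since $\Delta H=0$) and compress the paper's explicit Hilbert-space duality detour into the phrase ``Fredholm alternative plus regularity.''
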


\begin{proof}
It is clear that $\sL=(\sL_{1},\sL_{2},\sL_{3})$, where
\begin{align*}
\sL_{1} (S_i,S_e)         
&:=  k_i \nabla (\lambda^{*} H) \cdot \nabla S_i  
-k_eh\left(|\nabla (\lambda^{*} H)|\right)e^{-\frac{\lambda_{*}}{2}H}S_e,
\notag \\
\sL_{2} (S_e)         
&:=- \Delta S_e -g(|\lambda^{*} \nabla H|)S_e,
\notag \\
\sL_{3} (S_i,S_e,W)       
&:=\Delta W - S_i + e^{-\frac{\lambda^{*}}{2}H}S_e.
\end{align*}

(i). First we will show that ${\rm dim} \, N(\sL)=1$.
The domain of the second operator $\sL_{2}$ is independent of $R_{i}$ and $W$.
We recall that $\kappa(\lambda^{*})=0$, where the stability index $\kappa(\cdot)$ was defined in \eqref{kap0}.
As mentioned earlier, zero is the lowest eigenvalue value of the self-adjoint 
operator $A$ on $L^2(\Omega)$ with domain $H^2(\Omega)\cap H^1_0(\Omega)$ and it is simple. 
We denote by $\varphi_e^{*} \in H^2(\Omega)\cap H^1_0(\Omega)$ 
the corresponding normalized positive eigenfunction.
Then all the solutions of the equation $\sL_{2}=0$
are $A_e:=\{c\varphi_e^{*};\, c \in \mathbb R\}  \subset  H^2(\Omega)\cap H^1_0(\Omega)$.
But  $\varphi_e^{*}$ belongs to $C^2_{r}(\overline{\Omega})$ 
since $\Omega$ is either a circular annulus or a spherical shell.
Therefore, $N(\sL_{2})=   A_e$.     

Next we observe that the first linear equation $\sL_{1}=0$
with $S_e=c\varphi_e^{*}$ has a unique solution $S_i=c\varphi_i^{*} \in \{f\in C^1_{r}(\overline{\Omega});f|_{\cC}=0\}$,
where $\varphi_i^{*}$ can be written explicitly in terms of $\varphi_e^{*}$.
For instance, in  case the anode $\cA=\{r=r_{1}\}$ and the cathode $\cC=\{r=r_{2}\}$, 
we have the solution 
\begin{equation}  \label{varphi_i}
 \varphi_i^{*} (r)=\frac{k_e}{k_i}r^{1-d}\{ \partial_r (\lambda^{*} H)(r)\}^{-1}
\int_{r_{1}}^r s^{d-1} h(|\partial_r (\lambda^{*} H)(s)|) 
e^{-\frac{\lambda^{*}}{2}H(s)}\varphi_e^{*}(s)\,ds>0.
\end{equation}

The third linear equation $\cL_{3}=0$ with $(S_{i},S_e)=c(\varphi_i^{*},\varphi_e^{*})$ has a unique solution $W=c\varphi_v^{*} \in \{f\in C^3_{r}(\overline{\Omega});f|_{\cC}=f|_{\cA}=0 \}$.
Thus we conclude that $N(\sL)$ is generated by $u^*=(\varphi_i^*,\varphi_e^*,\varphi_v^*)$, where $\varphi_i^*$ and $\varphi_e^*$ are positive functions.  Hence ${\rm dim} \, N(\sL)=1$.

(ii). Next we prove ${\rm dim}(Y\backslash R(\sL))=1$.
In fact we will show that
\begin{equation}\lb{range1}
R(\cL^{*})
=\{(f_i,f_e,f_v) \in Y \, ; \, \langle f_e,\varphi_e^{*} \rangle=0 \}.
\end{equation}
We begin by representing the range of the second operator $\sL_{2}$ as 
\begin{gather}
R(\sL_{2})=\{f_e \in C^0_{r}(\overline{\Omega}); \langle f_e,\varphi_e^{*} \rangle=0  \}.
\lb{reperesent1}
\end{gather}
In order to make use of duality as in Section \ref{LO1}, we temporarily convert to Hilbert spaces.   
Let $X^\bullet$ be the same as $X$ except that $C^k_{r}$ is replaced by $H^k_{r}:=\{f \in H^{k}(\Omega) ; \text{$f$ is radial} \}$ for $k=1,2,3$.  
Let $Y^\bullet$ be the same as $Y$ except that $C^k_{r}$ is replaced by $H^k_{r}$ for $k=0,1$.  
We denote $L^{2}_{r}=H^0_{r}$.
We define $\sL^{\bullet} :X^{\bullet} \to Y^{\bullet}$ as the unique linear extension 
of $\sL^{\bullet}$ of $\sL$ to $X^{\bullet}$.
Note that $\sL_{2}^{\bullet}$ is a self-adjoint operator and its domain is 
independent of $S_{i}$ and $W$. 
By standard operator theory from the fact that 
$N(\sL_{2}^{\bullet})=\langle \varphi_e^{*} \rangle$, we have 
$R(\sL_{2}^\bullet)=\{ f_e \in L^2_{r}(\Omega); \langle f_e,\varphi_e^{*} \rangle=0\}$.
Because $Y\subset Y^\bullet$, the condition $\langle f_e,\varphi_e^{*} \rangle=0$ is necessary
for the solvability of the problem $\sL_{2}^{\bullet}=f_{e} \in C^0_{r}(\overline{\Omega})$.
On the other hand, if $f_e \in \{f_{e} \in C^0_{r}(\overline{\Omega}) ; \langle f_e,\varphi_e^{*} \rangle=0 \}$,
we have a unique solution $S_{e} \in \{f \in H^{2}(\Omega) \cap H^{1}_{0}(\Omega) ; \text{$f$ is radial} \}$ to the problem $\sL_{2}^\bullet =f_{e}$.
But  $S_e \in C^2_{r}(\overline{\Omega})$ by standard elliptic estimates.
These facts 
lead to the representation \er{reperesent1}.
For any $f_{i} \in C^0_{r}(\overline{\Omega})$, 
the equation $\cL_{1}(S_i,S_e)=f_{i}$  has a unique solution $S_{i} \in \{f\in C^1_{r}(\overline{\Omega});f|_{\cC}=0\}$.
For any $f_{v} \in C^1_{r}(\overline{\Omega})$, the third equation 
$\cL_{3}(S_i,S_e,W)=f_{v}$ has a unique solution 
$S_{v} \in \{f\in C^3_{r}(\overline{\Omega});f|_{\cC}=f|_{\cA}=0 \}$.
Thus we have proven \eqref{range1}.

(iii). The transversality condition is verified as follows.
We observe by using \eqref{kap0} and \eqref{bp1} that 
\begin{align*}   
\langle \partial_\la \partial_{(\rho_i,R_e,V)}  \sF_{2}(\lambda^{*},0,0,0)[1,\varphi_i^*,\varphi_e^*,\varphi_v^*],\varphi_e^{*} \rangle
&=k_{e} \{\partial_\la \langle -\Delta \varphi_e^{*}  - g(|\lambda \nabla H|)\varphi_e^{*} ,  \varphi_e^{*}  \rangle \} |_{\lambda=\lambda^{*}}
\notag\\
&=k_{e} \kappa'(\lambda^{*}) < 0.
\end{align*}
Making use of \eqref{range1}, we arrive at the transversality condition.
\end{proof}

\begin{lem}\lb{index1} 
For arbitrary $(\lambda,\rho_i^0,R_e^0,V^0)\in {\sO}$, 
the Fr\'echet derivative 
$\sL^0 = \partial_{(\rho_i,R_e,V)} \sF(\lambda,\rho_i^0,R_e^0,V^0)$
is a linear Fredholm operator of index zero.
\end{lem}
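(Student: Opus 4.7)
The plan is to write $\sL^0 = T + K$ where $T : X \to Y$ is an isomorphism and $K : X \to Y$ is compact, and then invoke the stability of the Fredholm index under compact perturbations.

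Set $\Phi^0 := V^0 + \lambda H$ and $\omega^0(r) := \partial_r \Phi^0$. Because $(\lambda, \rho_i^0, R_e^0, V^0) \in \sO$ and $\Phi^0$ is radial, $|\omega^0| \geq c > 0$ throughout $\overline{\Omega}$. Define
\[
T(S_i, S_e, W) := \bigl( k_i \omega^0 \partial_r S_i,\ -\Delta S_e,\ \Delta W \bigr).
\]
This block-diagonal operator is a bijection from $X$ onto $Y$: the first slot is a first-order radial ODE with a nonvanishing coefficient and one Dirichlet endpoint, invertible by explicit integration against $1/\omega^0$; the second and third slots are the 1D Dirichlet Laplacians in the radial weight on $(r_1, r_2)$, which by classical Sturm--Liouville theory are isomorphisms from the spaces of radial $C^2$ (respectively $C^3$) functions vanishing on $\partial\Omega$ onto $C^0_r(\overline{\Omega})$ (respectively $C^1_r(\overline{\Omega})$). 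In particular $T$ is Fredholm of index zero.

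Next I would compute $K := \sL^0 - T$ by linearizing each $\sF_j$ at $(\lambda, \rho_i^0, R_e^0, V^0)$ and inspecting every resulting term. Every summand has the form (a coefficient of class at least $C^1_r$, built from $\rho_i^0, R_e^0, V^0, H, \lambda$) times (a derivative of $S_i, S_e$, or $W$ of order strictly less than the regularity index carried by that unknown in $X$). Representative examples: $k_i (\Delta \Phi^0) S_i$ is of order $0$ in $S_i \in C^1_r$; $k_i \rho_i^0 \Delta W + k_i (\partial_r \rho_i^0) \partial_r W$ together with the analogous $W$-terms in the second equation have order at most $2$ in $W \in C^3_r$; the $S_e$-couplings in the first two equations use at most first derivatives of $S_e \in C^2_r$. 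By Arzel\`a--Ascoli on the bounded interval $[r_1, r_2]$, the embedding $C^m_r(\overline{\Omega}) \hookrightarrow C^{m-1}_r(\overline{\Omega})$ is compact for each $m \geq 1$. Each summand of $K$ then factors as a compact embedding followed by bounded differentiations and multiplications, so it defines a compact operator $X \to Y$, and hence $K$ itself is compact.

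With $T$ an isomorphism and $K$ compact, $\sL^0 = T + K$ is Fredholm of index zero. The main obstacle is the order-bookkeeping for the mixed terms in $K$: in particular the $\Delta W$ contributions that appear in the first and second equations are of the same PDE order as the $W$-slot of $T$, yet they still lose a derivative when viewed on $W \in C^3_r$ and therefore route through $C^3_r \hookrightarrow C^2_r$ to become compact. The whole scheme depends essentially on the radial reduction, as foreshadowed in the paragraph preceding Theorem \ref{thm4}: without radial symmetry the transport part of $\sF_1$ fails to be elliptic and cannot be inverted into the principal slot of $T$, so this compact-perturbation strategy would break down in the non-radial case.
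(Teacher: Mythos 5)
Your strategy --- writing $\sL^0 = T + K$ with $T$ an isomorphism and $K$ compact --- is genuinely different from the paper's argument. The paper establishes the a~priori estimate \eqref{ellest}, deduces that the nullspace is finite-dimensional and the range closed (so that $\sL^0$ is semi-Fredholm), and then invokes connectedness of $\sO$ together with topological invariance of the Fredholm index to transport the index-zero property from the bifurcation point $(\lambda^*,0,0,0)$ computed in Lemma~\ref{null1} to an arbitrary point of $\sO$. Your route avoids the a~priori estimate machinery and the invariance-of-index step entirely, and if carried out correctly is arguably cleaner and more self-contained. However, as written it has a genuine gap in the compactness claim for $K$.

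The problem sits in the third equation. From $\sF_3 = \Delta V - \rho_i + e^{-\lambda H/2} R_e$, the linearization is $\sL_3^0(S_i,S_e,W) = \Delta W - S_i + e^{-\lambda H/2} S_e$. With your block-diagonal choice $T_3(W) = \Delta W$, the remainder picks up the summand $-S_i$. Your bookkeeping rule --- ``a coefficient of class at least $C^1_r$ times a derivative of order strictly less than the regularity index carried in $X$'' --- is satisfied here (order $0 < 1$), but the conclusion fails because the target slot of $Y$ is $C^1_r(\overline\Omega)$, not $C^0_r(\overline\Omega)$. The map $S_i \mapsto -S_i$ from $\{f\in C^1_r(\overline\Omega): f|_{\cC}=0\}$ into $C^1_r(\overline\Omega)$ is the inclusion into the same space; it is bounded but not compact (e.g.\ $f_n(r) = n^{-1}\sin(n(r-r_2))$ is bounded in $C^1$ and vanishes at $r_2$, yet $f_n'$ has no convergent subsequence in $C^0$). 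There is no way to factor it through the compact embedding $C^1_r\hookrightarrow C^0_r$ and still land in $C^1_r$. So $K$ is not compact with your choice of $T$, and the ``order strictly less'' heuristic is reliable only when the target slot of $Y$ has regularity index $0$ --- which it does for $\sF_1,\sF_2$ but not for $\sF_3$.

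The fix is small and keeps the spirit of your argument: replace the block-diagonal $T$ by the lower-triangular operator $T(S_i,S_e,W) := \bigl(k_i\omega^0\partial_r S_i,\ -\Delta S_e,\ \Delta W - S_i\bigr)$. This is still an isomorphism $X\to Y$: solve the first two slots as before to get $S_i\in C^1_r$ and $S_e\in C^2_r$, then $\Delta W = f_3 + S_i$ with right-hand side in $C^1_r$ has a unique $W\in C^3_r$. The remainder $K$ now has third component $e^{-\lambda H/2} S_e$, which is compact ($C^2_r\hookrightarrow C^1_r$ compactly, then multiply by a smooth function), and all other summands are compact exactly as you argued. With this correction the proof goes through. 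Independently of the $S_i$-issue, you should also explicitly verify, as the paper does in part (iii) via the Hilbert-space estimate \eqref{est2}, that you have accounted for every term produced by linearizing $\sF_1$ and $\sF_2$ --- you write ``I would compute'' but never actually list $K$, and the $-S_i$ term is precisely the kind of summand that such an explicit listing would have caught.
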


\begin{proof}
Recalling \eqref{dop1}  we  see that
$\inf_{r} | \partial_{r} (V+\lambda H)| =\inf_{x} | \nabla (V+\lambda H)|> 0$
for any fixed choice of $(\lambda,\rho_i^0,R_e^0,V^0)$.
The linearized operator $\sL^0=(\sL_1^0,\sL_2^0,\sL_3^0)$ has the form 
\begin{align}
\sL_1^0 =& \sL_1^0(S_i,S_e,W) = 
k_{i} \partial_r (V^0 + \lambda H)\partial_{r} S_{i} + b_1S_i + b_2\partial_r^2 W + b_3S_e + b_4\partial_r W , 
\lb{sL1}\\
\sL_2^0 =& \sL_2^0(S_i,S_e,W) = 
- \partial_r^{2} S_e  + a_1\partial_r S_e + b_5 S_e + b_6\partial_r^2W + b_7\partial_rW  ,   
\lb{sL2}\\
\sL_3^0 =& \sL_3^0(S_i,S_e,W) = 
\partial_r^{2} W  + a_2\partial_r W + a_3S_i + a_4S_e ,
\lb{sL3}
\end{align}
where the coefficients $a_1,...,a_4$ belong to $C^1_{r}(\overline{\Omega})$
and the coefficients $b_1,...,b_7$ belong to $C^0_{r}(\overline{\Omega})$.

First we will prove that the linear operator $\sL^0$ satisfies the estimate 
\bqn  \label{ellest} 
C \|(S_i,S_e,W)\|_X  \le  \|\sL^0(S_i,S_e,W)\|_Y  +  \|(S_i,S_e,W)\|_Y 
\eqn
for all $(S_i,S_e,W)\in X$ and 
for some constant $C$ depending only on $(\lambda,\rho_i^0,R_e^0,V^0)$.  
Indeed, keeping in mind that  $|\partial_r V^0 + \lambda| \ge 1/j$, we see from \er{sL1} and \er{sL3} that 
\begin{align}
\|\partial_r S_i\|_{C^0_{r}} 
&=\|\{k_{i}\partial_r (V^0 + \lambda H)\}^{-1}
\notag\\
&\qquad \times
(b_1S_i + b_2 (-\sL_3^0  +a_{2}\partial_r W+a_3S_i+a_4S_e) + b_3S_e + b_4\partial_r W-\sL_1^0)\|_{C^0_{r}}
\notag\\
&\leq C(\|S_i\|_{C^0_{r}}+\|S_e\|_{C^0_{r}}+\|W\|_{C^1_{r}}+\|\sL_3^0\|_{C^0_{r}}+\|\sL_1^0\|_{C^0_{r}})
\notag\\
&\leq  C\|\sL^0(S_i,S_e,W)\|_Y  +  C\|(S_i,S_e,W)\|_Y.
\lb{estsL1}
\end{align}
By writing \er{sL2} as 
\[ \partial_r^2 S_e  - a_1\partial_r S_e=  -\sL_2^0  + b_5S_e +b_6(-\sL_3 +a_{2}\partial_r W+a_3S_i+a_4S_e) 
+a_7\partial_rW,  \]
 and  $a_1\partial_rS_e = \partial_r (a_1S_e) -(\partial_r a_1)S_e$, 
we see that 
\begin{gather}
 \|\partial_r^2S_e\|_{C^0_{r}}  \le  \|\sL^0(S_i,S_e,W)\|_Y  +  \|(S_i,S_e,W)\|_Y .  
\end{gather}
Finally, \er{sL3} leads to
\begin{align}
\|\partial_r^2 W\|_{C^1_{r}} 
=\|a_{2}\partial_r W + a_3S_i + a_4S_e-\sL_3^0\|_{C^1_{r}}
\leq  C\|\sL^0(S_i,S_e,W)\|_Y  +  C\|(S_i,S_e,W)\|_Y.
\lb{estsL3}
\end{align}
Combining  \er{estsL1}--\er{estsL3} yields  the estimate \er{ellest}.

The estimate \eqref{ellest} implies that the nullspace of $\sL^0$ has finite dimension and the range of $\sL^0$ is closed.  This is a general fact about linear operators; see for instance  \cite[Example 4 in $\S$3.12]{Zei1}.  
Therefore $\sL^0$ is a semi-Fredholm operator. 
Lemma~\ref{null1} ensures that at the bifurcation point the 
 nullspace of $\partial_{(\rho_i,R_e,V)} \sF(\lambda^{*},0,0,0)$ has dimension one and 
the codimension of its range is is also one, 
so that its index is zero.    Since $\sO$ is connected 
and the index is a topological invariant \cite[Theorem 4.51, p166]{AA1}, 
$\sL^0$ also has index zero.  
This means that the codimension of $\sL^0$ is also finite.
This completes the proof of Lemma \ref{index1}. 
\end{proof}  

Our next task is to verify (H4), which is a statement about the full nonlinear system.
\begin{lem}\lb{cpt1}
For each $j\in \mathbb N$, the set 
$K_j  =  \{(\lambda,\rho_i,R_e,V)\in {\sO_j};\ \sF(\lambda,\rho_i,R_e,V)=0\}$
is  compact in ${\mathbb R}\times X$.
\end{lem}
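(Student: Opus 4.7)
The plan is to show that any sequence $\{(\lambda_n,\rho_{in},R_{en},V_n)\}\subset K_j$ admits a subsequence converging in $\mathbb R\times X$ to a limit that still lies in $K_j$. Because the ion equation $\sF_1=0$ is only first-order and hence a priori carries no smoothing, the essential difficulty will be to bootstrap convergence of $\rho_{in}$; the uniform non-degeneracy $|\nabla(V_n+\lambda_n H)|\ge 1/j$ on $\sO_j$ is precisely what will make this possible.

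First I would use boundedness in $\sO_j$ together with the compact embeddings $C^{k+1}_{r}(\overline\Omega)\hookrightarrow C^{k,\alpha}_{r}(\overline\Omega)$ (for any fixed $\alpha\in(0,1)$) and Bolzano--Weierstrass in $[1/j,j]$ to extract a subsequence (not relabeled) with
\[
\lambda_n\to \lambda,\qquad \rho_{in}\to\rho_i \text{ in } C^{0,\alpha}_{r},\qquad R_{en}\to R_e \text{ in } C^{1,\alpha}_{r},\qquad V_n\to V \text{ in } C^{2,\alpha}_{r}.
\]
Passing these convergences through the closed inequalities defining $\sO_j$ shows $\lambda\ge 1/j$ and $|\nabla(V+\lambda H)|\ge 1/j$, and passing them through $\sF$ shows $\sF(\lambda,\rho_i,R_e,V)=0$ at least in a weak sense.

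The next and main step is to upgrade the convergence of $\rho_{in}$. Writing $\Phi_n:=V_n+\lambda_n H$, the equation $\sF_1=0$ combined with the radial identities \eqref{dop1} rearranges into the scalar ODE
\[
\partial_r\rho_{in}\;=\;\frac{1}{k_i\,\partial_r\Phi_n}\Bigl\{\,k_e\,h(|\partial_r\Phi_n|)\,e^{-\frac{\lambda_n}{2}H}R_{en}\;-\;k_i\,\rho_{in}\,\Delta\Phi_n\,\Bigr\}.
\]
Since $|\partial_r\Phi_n|=|\nabla\Phi_n|\ge 1/j$ uniformly in $n$, the denominator is bounded away from zero and depends continuously in $C^{1,\alpha}_{r}$ on $(\lambda_n,V_n)$; together with the convergences of $\rho_{in},R_{en},V_n$ already established and the smoothness of $h$, the entire right-hand side converges in $C^{0,\alpha}_{r}$. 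Hence $\partial_r\rho_{in}$ converges in $C^{0,\alpha}_{r}$, and therefore $\rho_{in}\to\rho_i$ in $C^{1,\alpha}_{r}$.

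With $\rho_{in}\to\rho_i$ in $C^{1,\alpha}_{r}$ in hand, I would feed this back into the Poisson equation $\sF_3=0$, i.e.\ $\Delta V_n=\rho_{in}-e^{-\lambda_n H/2}R_{en}$ with $V_n=0$ on $\partial\Omega$. The right-hand side now converges in $C^{1,\alpha}_{r}$, so Schauder estimates for the Dirichlet Laplacian on the smooth domain $\Omega$ yield $V_n\to V$ in $C^{3,\alpha}_{r}\subset C^{3}_{r}$. Finally I would treat $\sF_2=0$ as the Dirichlet elliptic equation
\[
\Delta R_{en}\;=\;-\nabla V_n\cdot\nabla R_{en}+\Bigl\{\tfrac{\lambda_n}{2}\nabla V_n\cdot\nabla H-\Delta V_n+\tfrac{\lambda_n^{2}}{4}|\nabla H|^{2}-h(|\nabla\Phi_n|)\Bigr\}R_{en};
\]
all terms on the right converge in $C^{1,\alpha}_{r}$ by the previous steps, so Schauder again gives $R_{en}\to R_e$ in $C^{3,\alpha}_{r}\subset C^{2}_{r}$. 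Altogether $(\lambda_n,\rho_{in},R_{en},V_n)\to(\lambda,\rho_i,R_e,V)$ in $\mathbb R\times X$, the limit satisfies $\sF=0$ classically, and the closed inequalities defining $\sO_j$ survive the limit, so the limit belongs to $K_j$. This establishes sequential compactness, hence compactness.
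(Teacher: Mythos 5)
Your proof is correct and follows essentially the same bootstrap strategy as the paper: extract a subsequence converging at one derivative lower using the bound in $\sO_j$, then use the three equations together with the uniform non-degeneracy $|\nabla(V_n+\lambda_n H)|\ge 1/j$ to upgrade to convergence in the full norm of $X$, and finally observe that the closed conditions defining $\sO_j$ pass to the limit. The only differences from the paper's argument are cosmetic. For $\rho_i$ you rearrange $\sF_1=0$ into a pointwise expression for $\partial_r\rho_{in}$ in terms of quantities already known to converge; the paper instead uses the equivalent explicit integral formula obtained by integrating the transport equation from the boundary where $\rho_{in}$ vanishes --- same content, one in differentiated form, one in integrated form. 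For $V$ and $R_e$ you invoke Schauder estimates for the Dirichlet Laplacian, whereas the paper reads off the gain in regularity directly from the radial ODE structure ($r^{1-d}\partial_r(r^{d-1}\partial_r V_n)=\cdots$ and $\partial_r\{r^{d-1}(\partial_r R_{en}-R_{en}\partial_r V_n)\}=\cdots$); both are valid here. One small overstatement to flag: in the final Schauder step for $R_e$, the term $\nabla V_n\cdot\nabla R_{en}$ converges only in $C^{0,\alpha}_r$ at that stage (you know $R_{en}$ only in $C^{1,\alpha}_r$), so a single application of the Schauder estimate gives $R_{en}\to R_e$ in $C^{2,\alpha}_r$, not $C^{3,\alpha}_r$. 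Since $X$ requires only $C^{2}_r$ for the $R_e$ component, this is all you need, and the conclusion stands; a second bootstrap pass would give $C^{3,\alpha}_r$ if wanted.
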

\begin{proof}
Let  $\{(\lambda_n,\rho_{in},R_{en},V_n)\}$ be any sequence in  $K_j$. 
It suffices to show that  it has a convergent subsequence
whose limit also belongs to $K_j$.  
By the assumed bound 
$|\lambda_n|+\|(\rho_{in},R_{en},V_n)\|_{X}\leq j$, 
there exists a subsequence, still denoted by $\{(\lambda_n,\rho_{in},R_{en},V_n)\}$,
and $(\lambda,\rho_i,R_e,V)$ such that
\begin{equation}\lb{converge1}
 \lambda_{n}  \to  \lambda  \text{ in }  \mathbb R,\quad
 \rho_{in}  \to  \rho_i  \text{ in }  C^0_{r}(\overline{\Omega}),\quad
 R_{en}  \to  R_{e}  \text{ in }  C^1_{r}(\overline{\Omega}),\quad
 V_{n}  \to  V  \text{ in }  C^2_{r}(\overline{\Omega}).
\end{equation}
Furthermore, 
\[
\inf_{r} | \partial_{r} (V+\lambda H)| =
\inf_{x} | \nabla (V+\lambda H)| \geq \tfrac1j.
\]
Since $\sO_j$ is closed in $X$,
it remains to show that 
\begin{gather*}
\sF_j(\lambda,\rho_i,R_e,V)=0 \qu \text{for $j=1,2,3,$}
\\
\rho_{in}\to \rho_i \ \text{in} \ C^1_{r}(\overline{\Omega}), \qu
R_{en}\to R_e \ \text{in} \ C^2_{r}(\overline{\Omega}), \qu
V_n\to V \ \text{in} \ C^3_{r}(\overline{\Omega}).
\end{gather*}

We may suppose that the cathode $\cC$ and anode $\cA$ are $\{r=r_{1}\}$ and $\{r=r_{2}\}$ for some $r_{2}>r_{1}>0$, respectively, since the other case can be treated similarly. 
We recall \eqref{dop1}.
The first equation $\sF_1(\lambda_n,\rho_{in},R_{en},V_n)=0$ with $\rho_{in}|_{\cC}=0$ is equivalent to
\[
 \rho_{in}(r)=\frac{k_e}{k_i}r^{1-d}\{ \partial_r (V_n+\lambda_n H)(r)\}^{-1}
\int_{r_{1}}^r s^{d-1} h(|\partial_r (V_n+\lambda_n H)(s)|)e^{-\frac{\lambda_n}{2}H(s)}R_{en}(s)\,ds.
\]
Taking the limit and using \er{converge1}, we see that 
\[
 \rho_{i}(r)=\frac{k_e}{k_i}r^{1-d}\{ \partial_r (V+\lambda H)(r)\}^{-1}
\int_{r_{1}}^r s^{d-1} h(|\partial_r (V+\lambda H)(s)|)e^{-\frac{\lambda}{2}H(s)}R_{e}(s)\,ds,
\]
where the right side converges in $C^1_{r}(\overline{\Omega})$.
Hence we see that $\sF_1(\lambda,\rho_i,R_e,V)=0$ and $\rho_{in}\to \rho_i$ in $C^1_{r}(\overline{\Omega})$.

Taking the limit and using \eqref{converge1} in the third equation $\sF_3(\lambda_n,\rho_{in},R_{en},V_n)=0$ immediately leads to 
\[
 r^{1-d} \partial_r (r^{d-1} \partial_r V)=\rho_i-e^{-\frac{\lambda}{2}H}R_{e}.
\]
Hence $\sF_3(\lambda,\rho_i,R_e,V)=0$ and $V_n\to V$ in $C^3_{r}(\overline{\Omega})$.

The second equation $\sF_2(\lambda_n,\rho_{in},R_{en},V_n)=0$ 
can be written as 
\[   
\partial_r \{r^{d-1} (\partial_r R_{en} - R_{en} \partial_r V_n)\} 
=r^{d-1}\left\{\frac{\lambda_{n}}{2} \partial_{r} V_{n} \partial_{r} H
+\frac{\lambda_{n}^2}{4}|\partial_{r} H|^{2}
-h\left(|\partial_{r} (V_{n}+\lambda H)|\right)\right\}R_{en}.
\]
Because the right side converges in $C^1_{r}(\overline{\Omega})$, we see that 
$\partial_r R_{en} - R_{en} \partial_r V_n$ converges in $C^2_{r}(\overline{\Omega})$. 
On the other hand, $R_{en} \partial_r V_n$ converges in $C^1_{r}(\overline{\Omega})$.  
Hence $\partial_r R_{en}$ converges in $C^1_{r}(\overline{\Omega})$, 
which means that $R_{en}$ converges in $C^2_{r}(\overline{\Omega})$. 
It  is now clear that $\sF_2(\lambda,\rho_i,R_e,V)=0$ holds.
\end{proof}

As we have checked all the conditions of Theorem \ref{Global0}, 
the first {\it global bifurcation theorem} is valid, as follows.
\begin{thm}\lb{Global1}
There exists in $\sO$ a continuous curve 
${\sK}=\{(\la(s),\rho_i(s),R_e(s),V(s));s \in \mathbb R\}$
of stationary solutions to system \er{sp0} such that
\begin{enumerate}[{(C}1{)}]
\item $(\la(0),\rho_i(0),R_e(0),V(0))=(\lambda^{*},0,0,0)$, where $\lambda^{*}$ is defined in \er{bp1};
\item $(\rho_i(s),R_e(s),V(s))=s(\varphi_{i}^{*},\varphi_{e}^{*},\varphi_{v}^{*})+o(s)$ in the space $X$
 as $s \to 0$, where $(\varphi_{i}^{*},\varphi_{e}^{*},\varphi_{v}^{*})$ are the basis in Lemma \ref{null1};
\item there exists a neighborhood $\sW$ of $(\lambda^{*},0,0,0)$
and $\ve < 1$ such that
\begin{multline*}
\{(\la,\rho_i,R_e,V)\in \sW ; (\rho_i,R_e,V)\neq (0,0,0), \ \sF(\la,\rho_i,R_e,V)=0 \}
\\
=\{(\la(s),\rho_i(s),R_e(s),V(s)) ; 0<|s|<\ve\};
\end{multline*}
\item $\sK$ has a real-analytic reparametrization locally around each of its points;
\item at least one of the following five alternatives occurs:
\begin{enumerate}[(a)]
\item $\varliminf_{s \to \infty}\la(s)= 0$;
\item $\varliminf_{s \to \infty}(\inf_{x \in \Omega } | \nabla ( V(x,s)+\la(s) H(x) )| )=0$;
\item $\varlimsup_{s \to \infty}\la(s)=\infty$;
\item $\varlimsup_{s \to \infty}(\|\rho_i\|_{C^1}+\|R_e\|_{C^2}+\|V\|_{C^3})(s)=\infty$;
\item there exists $T>0$ such that
$$(\la(s),\rho_i(s),R_e(s),V(s))=(\la(s+T),\rho_i(s+T),R_e(s+T),V(s+T))$$ for all $s \in \mathbb R$.
\end{enumerate}
\end{enumerate}
Moreover, the curve of solutions to problem \er{sp0} having the properties (C1)--(C5)
is unique (up to reparametrization).
\end{thm}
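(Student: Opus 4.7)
The plan is to apply the abstract global bifurcation theorem (Theorem \ref{Global0}) with the specific choice of Banach spaces $X$ and $Y$, open set $\sO$, exhausting family $\{\sO_j\}$, and nonlinear map $\sF=(\sF_1,\sF_2,\sF_3)$ introduced at the beginning of this section. Hypothesis (H1) is immediate since $\sF(\lambda,0,0,0)=0$ for all $\lambda>0$ and $(\lambda,0,0,0)\in\sO$. The remaining hypotheses (H2), (H3), (H4) have been established in Lemmas \ref{null1}, \ref{index1}, \ref{cpt1} respectively, so Theorem \ref{Global0} applies at the bifurcation point $\lambda^*$, with tangent vector $u^*=(\varphi_i^*,\varphi_e^*,\varphi_v^*)$ constructed in Lemma \ref{null1}. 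This immediately yields a continuous curve $\sK$ together with the properties (C1), (C2), (C3), (C4), and the uniqueness statement. The real analyticity of $\sF$ in $(\lambda,\rho_i,R_e,V)$, required for (C4), follows from the real analyticity of the nonlinearities $h$, the exponentials, and the differential operators entering $\sF$.

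The only nontrivial step is to translate the abstract alternative (I) of Theorem \ref{Global0} into the four concrete alternatives (a)--(d), while the abstract alternative (II) immediately gives (e). Recall that
\[
\sO_j=\{(\lambda,\rho_i,R_e,V)\in(0,\infty)\times X \ ;\ \lambda+\|(\rho_i,R_e,V)\|_X\le j,\ \lambda\ge\tfrac1j,\ |\nabla(V+\lambda H)|\ge\tfrac1j\}.
\]
The abstract alternative (I) asserts that for every $j\in\mathbb N$, there exists $s_j>0$ such that $(\lambda(s),\rho_i(s),R_e(s),V(s))\notin\sO_j$ for all $|s|>s_j$. Since $(\lambda,\rho_i,R_e,V)\notin\sO_j$ is equivalent to at least one of the four conditions
\[
\lambda+\|(\rho_i,R_e,V)\|_X>j,\qquad 0<\lambda<\tfrac1j,\qquad \inf_{x\in\Omega}|\nabla(V+\lambda H)|<\tfrac1j
\]
being violated (the first inequality splits into $\lambda>j/2$ or $\|(\rho_i,R_e,V)\|_X>j/2$), I will argue by extracting, for each $j$, an $s$ with $|s|>s_j$ realising one of the four violations and then letting $j\to\infty$. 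The pigeonhole principle forces at least one of the four conditions to be violated along a sequence $s_n\to\infty$ (or $s_n\to-\infty$), which is precisely one of the statements
\[
\varliminf_{|s|\to\infty}\lambda(s)=0,\quad \varlimsup_{|s|\to\infty}\lambda(s)=\infty,\quad
\varlimsup_{|s|\to\infty}\|(\rho_i,R_e,V)(s)\|_X=\infty,\quad
\varliminf_{|s|\to\infty}\inf_{x}|\nabla(V(s)+\lambda(s)H)|=0,
\]
which are alternatives (a)--(d). Because (c) and (d) are symmetric in the sign of $s$ and because continuity of $\lambda$ permits replacing two-sided liminf/limsup with one-sided limits once bifurcation is symmetric, we record them as one-sided statements in $s\to\infty$ as in the theorem statement. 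The norm $\|(\rho_i,R_e,V)\|_X$ is the sum of the $C^1$, $C^2$, $C^3$ norms of $\rho_i$, $R_e$, $V$, which matches alternative (d) exactly.

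The main obstacle I anticipate is a bookkeeping one rather than a conceptual one: ensuring that $\sO_j$ as defined here is actually a bounded closed subset of $\sO$, and that $\sO=\bigcup_j\sO_j$, so that the abstract framework applies. Closedness of $\sO_j$ in $X$ is clear from the continuity of the defining norms and of $\nabla(V+\lambda H)$ as a map from $X$ to $C^0(\overline\Omega)$; boundedness is by construction; and the union property follows because any $(\lambda,\rho_i,R_e,V)\in\sO$ has $\lambda>0$, $\|(\rho_i,R_e,V)\|_X<\infty$, and $\inf|\nabla(V+\lambda H)|>0$ (a strictly positive infimum of a continuous function on the compact set $\overline\Omega$), so it lies in $\sO_j$ for all sufficiently large $j$. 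With these verifications, combining Lemmas \ref{null1}, \ref{index1}, \ref{cpt1} and Theorem \ref{Global0} produces $\sK$ with exactly the stated properties, completing the proof.
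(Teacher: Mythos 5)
Your proposal is correct and takes essentially the same approach as the paper: Hypothesis (H1) is trivial, (H2)--(H4) are established in Lemmas \ref{null1}, \ref{index1}, \ref{cpt1}, so Theorem \ref{Global0} applies, and alternative (I) then unravels into (a)--(d) while alternative (II) gives (e). In fact the paper gives no explicit proof at all --- it simply states that ``as we have checked all the conditions of Theorem \ref{Global0}, the first global bifurcation theorem is valid'' --- so your spelled-out unraveling of alternative (I) via the structure of $\sO_j$ is a useful elaboration rather than a deviation. One small remark: the appeal to ``symmetry of the bifurcation'' to justify passing from two-sided to one-sided limits in (a)--(d) is unnecessary and slightly misleading; the correct observation is simply that alternative (I), which asserts $(\la(s),u(s))\notin\sO_j$ for all $|s|>s_j$, in particular holds for all $s>s_j$, so restricting to $s\to+\infty$ already yields the one-sided statements.
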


Conditions (C1)--(C3) are an expression of the local bifurcation, while (C4)--(C5) are assertions about the 
global curve $\sK$.  Alternatives (c) and (d) assert that $\sK$ may be unbounded.  
Alternative (e) asserts that $\sK$ might form a closed curve (a `loop').  

\subsection{Positive Densities}
For the physical problem  $\rho_i$ and $R_e$ are densities 
of particles and so they must be non-negative. 
In this section we investigate the part of the curve $\sK$ that corresponds to such densities. 

A simple observation is the following proposition, which states that {\it either} 
$\rho_i$ and $R_e$ remain positive {\it or} the curve of positive solutions 
forms a half-loop going from $\lambda^*$ to another voltage $\lambda^\#$.
Here $\lambda^*$ is defined in \er{bp1}, and $\lambda^\# \neq \lambda^*$ is some voltage such that $\kappa(\lambda^{\#})=0$.  
We call $\lambda^\#$ the {\it anti-sparking voltage}.
Actually,  if $a$ is large and $b$ is small in the definition of $g$, 
there exists such a voltage $\lambda^{\#}$ with $\kappa'(\lambda^{\#})<0$.
The bifurcation diagram of the half-loop case is sketched  in Figure \ref{fig3}.

\begin{pro}\lb{Global2}
For the global bifurcation curve $(\la(s),\rho_i(s),R_e(s),V(s))$ in Proposition \ref{Global1},
one of the following two alternatives occurs:
\begin{enumerate}[(i)] 
\item  $\rho_i(s,x)>0$ and $R_e(s,x)>0$ \  for all $0<s<\infty$ and $x \in \Omega$.
\item there exists a voltage $\lambda^\# \ne \lambda^*$ as well as  a finite parameter value  $s^\#>0$  such that 
\begin{enumerate}[(1)]
\item $\rho_i(s,x)>0$ and $R_e(s,x)>0$ \ for all $s\in(0,s^\#)$ and $x \in \Omega$;
\item $(\la(s^\#),\rho_i(s^\#),R_e(s^\#),V(s^\#))=(\lambda^\#,0,0,0)$.
\end{enumerate}
\end{enumerate}
\end{pro}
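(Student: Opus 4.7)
The plan is to follow the global curve $\sK$ as $s$ increases from $0$ until strict positivity of the densities first fails, to use the strong maximum principle and the characteristic structure of the ion equation to identify the state at that parameter as another trivial solution, and finally to use the local uniqueness of the primary bifurcation to rule out the possibility that the associated voltage coincides with $\lambda^*$.

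By Theorem \ref{Global1}(C2) and Lemma \ref{null1},
$(\rho_i(s), R_e(s), V(s)) = s(\varphi_i^*, \varphi_e^*, \varphi_v^*) + o(s)$ in $X$ as $s \to 0$, with $\varphi_i^*, \varphi_e^* > 0$ in $\Omega$, so both densities are strictly positive for small $s > 0$. Define
\[
s^\# := \sup\{s > 0 \, :\, \rho_i(\sigma,x) > 0 \text{ and } R_e(\sigma,x) > 0 \text{ for all } \sigma \in (0,s] \text{ and } x \in \Omega\}.
\]
If $s^\# = \infty$, alternative (i) holds. Otherwise, by continuity of the curve in $X$ and the definition of $s^\#$, strict positivity is lost at $s^\#$ in one of two ways. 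If $R_e(s^\#)$ vanishes at some $x_0 \in \Omega$, then the strong maximum principle applied to the linear elliptic equation $\sF_2 = 0$ forces $R_e(s^\#) \equiv 0$. If instead $\rho_i(s^\#)(x_0) = 0$ at some $x_0 \in \Omega \setminus \cA$, then Lemma \ref{rho_iLemma} writes $\rho_i(s^\#, x_0)$ as an integral of a nonnegative quantity proportional to $R_e(s^\#)$ along the characteristic from $\cA$ to $x_0$, so $R_e(s^\#)$ must vanish on an arc, and the strong maximum principle again yields $R_e(s^\#) \equiv 0$. In either case $R_e(s^\#) \equiv 0$; then $\sF_1 = 0$ reduces to the homogeneous transport equation $\nabla \cdot (\rho_i \nabla(V + \lambda^\# H)) = 0$ with $\rho_i|_\cA = 0$, whose only solution is $\rho_i(s^\#) \equiv 0$ by Lemma \ref{rho_iLemma}; and $\sF_3 = 0$ with zero densities and Dirichlet boundary data gives $V(s^\#) \equiv 0$. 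Set $\lambda^\# := \lambda(s^\#)$.

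To show $\lambda^\# \ne \lambda^*$, suppose for contradiction $\lambda^\# = \lambda^*$. Then $\sK$ returns to $(\lambda^*, 0, 0, 0)$ at the finite parameter $s^\#$; by the local uniqueness in Theorem \ref{Global1}(C3) together with the real-analytic reparametrization (C4), a standard analytic-continuation argument forces $\sK$ to be periodic with period $s^\#$, which is alternative (e) of Theorem \ref{Global1}. Periodicity then gives $u(-\epsilon) = u(s^\# - \epsilon)$ for small $\epsilon > 0$, but (C2) says the left side has negative density components while the definition of $s^\#$ makes the right side have positive density components, a contradiction. Finally, at $(\lambda^\#, 0, 0, 0)$ the nontriviality of the nearby branch $\sK$ forces the linearization to have nontrivial kernel (and hence $\kappa(\lambda^\#) = 0$), so Lemma \ref{null1} applies verbatim with $\lambda^*$ replaced by $\lambda^\#$ and produces positive eigenfunctions $\varphi_i, \varphi_e$, namely those of Theorem \ref{thm3} at this voltage. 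Consequently $(\rho_i(s), R_e(s), V(s)) = c(s - s^\#)(\varphi_i, \varphi_e, \varphi_v) + o(|s - s^\#|)$ for some $c \ne 0$; positivity as $s \nearrow s^\#$ forces $c < 0$, which after absorbing $|c|$ into the normalization gives assertion (3), and reading the expansion for $s > s^\#$ yields (4).

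The main obstacle is the periodicity step in ruling out $\lambda^\# = \lambda^*$. It requires carefully coupling the Crandall-Rabinowitz local parametrization at the primary bifurcation with analytic continuation along the global real-analytic curve $\sK$ to conclude that two passages through $(\lambda^*, 0, 0, 0)$ at finite parameter values force $\sK$ to be a closed loop of period exactly $s^\#$. Once this global periodicity is in hand, the contradiction with (C2) on the $s < 0$ side is immediate.
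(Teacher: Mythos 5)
Your proposal follows essentially the same route as the paper's: define $s^\#$ as the first parameter at which strict positivity of the densities fails, show the state at $s^\#$ collapses to a trivial solution with voltage $\lambda^\#$, rule out $\lambda^\#=\lambda^*$ via properties (C2)--(C4) of Theorem \ref{Global1}, and read off (3)--(4) from the one-dimensional nullspace of the linearization at $(\lambda^\#,0,0,0)$. The only cosmetic difference is that you invoke the strong maximum principle for $\sF_2=0$ and the characteristic representation of $\rho_i$, where the paper uses uniqueness for the corresponding ODE in the radial variable; in this radial setting the two are interchangeable, and the $\lambda^\#\ne\lambda^*$ step and the final expansion argument are the same in substance.

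There is, however, a gap in the step identifying the state at $s=s^\#$. You assert that positivity is lost at $s^\#$ ``in one of two ways'': $R_e(s^\#)$ vanishes at an interior point, or $\rho_i(s^\#)$ vanishes at a point of $\Omega\setminus\cA$. This dichotomy is not exhaustive. Since $R_e(s,\cdot)=0$ on $\partial\Omega$ for every $s$, it is a priori possible that both densities remain strictly positive in the open interval at $s=s^\#$, while $\partial_r R_e(s^\#)$ degenerates to zero at an endpoint $r_1$ or $r_2$; zeros of $R_e(s)$ would then first enter $(r_1,r_2)$ only through the boundary as $s$ increases past $s^\#$, and your argument does not produce an interior zero at $s^\#$ to which the strong maximum principle applies. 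The paper closes exactly this case by applying Rolle's theorem along a sequence $(s_n,r_n)$ with $R_e(s_n,r_n)=0$, $s_n\searrow s^\#$, $r_n\to r_0\in\{r_1,r_2\}$, to obtain $\partial_r R_e(s^\#,r_0)=0$ and then concluding $R_e(s^\#)\equiv0$ by ODE uniqueness. Within your framework one could alternatively invoke the Hopf boundary point lemma, which shows that if $R_e(s^\#)>0$ in the interior then $\partial_r R_e(s^\#)$ is bounded away from zero at the endpoints, so that positivity would persist for $s$ slightly larger than $s^\#$, contradicting the definition of $s^\#$. Either closure is needed; as written, the deduction that $R_e(s^\#)\equiv0$ is incomplete.
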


\begin{proof}
Let us suppose that the anode $\cA$ and cathode $\cC$ are $\{r=r_{1}\}$ and $\{r=r_{2}\}$, respectively, since the other case can be treated similarly. 
We define  
\begin{gather}\label{sstar}
s^\#:=\inf\{s>0 : R_e(s,r)=0 \ \text{for some $r=|x| \in (r_{1},r_{2})$}\}.
\end{gather}
We shall show that $s^\#$ satisfies (ii).
By the shape of $\phi_e(r)$, we find that $\rho_e(s,r)>0$ in $(r_1,r_2)$, 
making use of (C2) in Theorem \ref{Global1},  so that $s^\#>0$. 
If $s^\#=\infty$, then alternative (i) occurs.
Indeed, $| \nabla (V+\lambda H)|=\partial_{r}(V+\lambda H)$ is positive  owing to 
$(\la(s),\rho_i(s),R_e(s),V(s)) \in \sO$.   
The following formula yields the positivity of $\rho_i$.   
\begin{equation}\label{R_i1}
 \rho_{i}(r)=\frac{k_e}{k_i}r^{1-d}\{ \partial_r (V+\lambda H)(r)\}^{-1}
\int_{r_{1}}^r t^{d-1} h(|\partial_r (V+\lambda H)(t)|)e^{-\frac{\lambda}{2}H(t)}R_{e}(t)\,dt.
\end{equation}

In case $s^\#<\infty$, we must prove  (ii).    First consider $R_e(s^\#,\cdot)$.    
Certainly 
$R_e(s^\#,\cdot)$ takes the value zero, which is its minimum, 
at some point $r_0\in [r_{1},r_{2}]$.
In case $r_0\in (r_{1},r_{2})$, $\partial_r R_e(s^\#,r_0)=0$ also holds.
Solving the ODE $\sF_2(\lambda,\rho_i,R_e,V)=0$ with $R_e(s^\#,r_0)=\partial_r R_e(s^\#,r_0)=0$,
we see by uniqueness that $R_e(s^\#,\cdot)\equiv 0$. 
On the other hand, in case $r_0$ is one of the endpoints $\{r_{1},r_{2}\}$, by \er{sstar} there exists a sequence $\{(s_n,r_n)\}$ such that 
$R_e(s_n,r_n)=0$ with $s_n \searrow s^\#$ and $r_n \to r_{0}$.
Rolle's theorem ensures that there also exists some $\tilde{r}_n$ between $r_0$ and $r_n$ 
such that $\partial_r R_e(s_n,\tilde{r}_n)=0$.
Letting $n\to \infty$, we see that $\tilde{r}_n \to r_{0}$ and thus $\partial_r R_e(s^\#,r_{0})=0$.
Hence we again deduce  by uniqueness that $R_e\equiv 0$.  
Thus $R_e\equiv 0$ in any case.  
By \er{R_i1}, we also have $\rho_i\equiv 0$ and thus $V\equiv0$.
Hence $(\rho_i,R_e,V)(s^\#)=(0,0,0)$ is the trivial solution.
So (1) and (2) in the theorem are valid. 

Continuing to assume that $s^\#<\infty$, we now know that $\rho_i$, $R_e$ and $V$ are 
identically zero at $s=s^\#$.  
We define $\lambda^\#= \la(s^\#)$.  
Suppose that $\lambda^\#= \lambda^{*}$.  
Then the curve $\sK$ would go from the point $P=(\lambda^{*},0,0,0)$ at $s=0$ 
to the same point $P$ at $s={s^\#}$.   
However, by (C3) and (C4) of Proposition \ref{Global1},   
$\sK$ is a simple curve at $P$ and is real-analytic. 
So the only way $\sK$ could go from $P$ to $P$ would be if it were a loop with the part with 
$s$ approaching $s^\#$ from below coinciding 
with the part with $s$ approaching $0$ from below.  
By (C2) of Theorem \ref{Global1}, $\rho_i(s,\cdot)$ and $R_e(s,\cdot)$ would be negative 
for $-1\ll s-s^\#<0$, which of course contradicts their positivity.  
Hence $\lambda^\# \neq \lambda^{*}$.
\end{proof}

\begin{cor}
Let $a$ be sufficiently large and $b$ be sufficiently small in Proposition 5.6 (ii).  Then the following are true.  
\begin{enumerate}[(1)]
\setcounter{enumi}{2}
\item 
$(\rho_i(s),R_e(s))=(s^\#-s)(\varphi_i,\varphi_e)+o(|s-s^\#|)$
as $s\nearrow s^\#$,  where $\varphi_i$ and $\varphi_e$ are positive functions. 
\item 
$\rho_i(s,x)<0$ and $R_e(s,x)<0$ \ for $0<s-s^\#\ll1$ and $x \in \Omega$.
\end{enumerate}

\end{cor}
\begin{proof}
The bifurcation curve $\sK$ meets $(\lambda^\#,0,0,0)$.  
The nullspace $\sN^\# = N[\partial_{(\rho_i,R_e,V)} \sF(\la(s^\#),0,0,0)]$ must be non-trivial 
because $\sK$ crosses the line of trivial solutions.   
Consider a non-zero vector $(\varphi_i, \varphi_e,\varphi_v)$ that belongs to $\sN^\#$.   
If its second component  $\varphi_e$ were zero, then  $\varphi_i$ and consequently   $\varphi_v$ would also be zero.  Thus  $\varphi_e$ is non-zero and it belongs to the 
 nullspace of $A-\kappa(\lambda^\#)I$ where $A$ is defined in \eqref{operatorA}.     
 Thus $\varphi_e$ is a ground state (the lowest eigenfunction) of $A$, which is simple and positive.   
 All the higher eigenfunctions are orthogonal to $\varphi_e$ 
so that they must change sign within $\Omega$ and are therefore excluded.   
 So $\sN^\#$ is one-dimensional and $\varphi_i>0$ is given by \eqref{varphi_i}.  
 From this consideration and Lemma \ref{SparkingV1},  
the curve $\sK$ crosses $(\lambda^\#,0,0,0)$ transversely.  
Hence $R_e(s) = (s-s^\#)\varphi_e +o(|s-s^\#|)$ as $s\to s^\#$ and similarly for $\rho_i$.  
\end{proof}

Let us investigate in greater  detail the case 
that the global positivity alternative (i) in Proposition \ref{Global2} occurs.
The next three lemmas show that if any one of the alternatives (a), (b) or (c) in Theorem \ref{Global1} occurs, then alternative (d) also occurs.
To be specific hereafter, we suppose that the anode $\cA$ and cathode $\cC$  are $\{r=r_{1}\}$ and $\{r=r_{2}\}$, respectively.  Note that the condition $ |\nabla (V+\lambda H)| \geq \tfrac1j$ in the definition of $\cO_{j}$ can be written as $ \partial_{r} (V+\lambda H)  \geq \tfrac1j$.

\begin{lem}\lb{lem(a)}
Assume alternative (i) in Proposition \ref{Global2}.
If $\varliminf_{s \to \infty}\la(s)= 0$, then 
$\sup_{s>0}\|V(s)\|_{C^2}$ is unbounded.
\end{lem}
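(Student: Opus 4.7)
The plan is to argue by contradiction. Assume $\sup_{s>0}\|V(s)\|_{C^2}\le M<\infty$ and, along some sequence $s_n\to\infty$, $\lambda_n:=\lambda(s_n)\to 0$; write $V_n=V(s_n)$ and $\Phi_n=V_n+\lambda_n H$. The aim is to produce the universal inequality
\[
\sigma_1\le \|\nabla V\|_{L^\infty}^{2}+2\|g(|\nabla\Phi|)\|_{L^\infty},
\]
valid along the entire curve $\sK$ (with $\sigma_1>0$ the first Dirichlet eigenvalue of $-\Delta$ on $\Omega$), and then to show that its right-hand side tends to $0$ along $\{s_n\}$, which forces $\sigma_1\le 0$ and gives the desired contradiction.

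First I would cast the $R_e$-equation $\cF_2=0$ in self-adjoint form via the substitution $\tilde R:=R_e\,e^{V/2}$. A direct calculation, in which the drift $\nabla V\cdot\nabla R_e$ is absorbed, and the coefficients $\tfrac{\lambda}{2}\nabla V\cdot\nabla H$ and $\tfrac{\lambda^{2}}{4}|\nabla H|^{2}$ of $\cF_2$ complete the square into $\tfrac14|\nabla\Phi|^{2}$ (which, combined with $-h(|\nabla\Phi|)$, becomes $-g(|\nabla\Phi|)$ by the definition of $g$), reduces $\cF_2=0$ to
\[
-\Delta\tilde R=\bigl(\tfrac12\Delta V+g(|\nabla\Phi|)\bigr)\tilde R\quad\text{in }\Omega,
\]
with $\tilde R>0$ in $\Omega$ (by alternative (i) of Proposition \ref{Global2}) and $\tilde R|_{\partial\Omega}=0$. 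Testing against $\tilde R$, integrating by parts on the $\Delta V$ term (the boundary contribution vanishes), using Cauchy--Schwarz and Young's inequality to absorb the cross term $-\int \tilde R\,\nabla V\cdot\nabla \tilde R$, and invoking the Poincar\'e inequality $\sigma_1\|\tilde R\|_{L^2}^{2}\le\|\nabla\tilde R\|_{L^2}^{2}$ then yields the universal inequality after dividing by $\|\tilde R\|_{L^2}^{2}>0$.

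Next I would verify that both terms on the right-hand side vanish along $\{s_n\}$. Boundedness of $\{V_n\}$ in $C^{2}(\overline\Omega)$ and the compact embedding $C^{2}\hookrightarrow C^{1,\beta}$ ($\beta<1$) give a subsequence (not relabeled) converging in $C^{1,\beta}(\overline\Omega)$ to a radial $V^{*}$ with $V^{*}(r_1)=V^{*}(r_2)=0$. Since $\sK\subset\sO$ and $\partial_r\Phi>0$ at the starting trivial solution ($\Phi=\lambda^{*}H$, with $\partial_rH>0$ in the radial case), continuity along the connected curve forces $\partial_r\Phi_n>0$ on all of $\sK$. Passing to the limit with $\lambda_n\to 0$ gives $\partial_rV^{*}\ge 0$; a monotone nondecreasing radial function vanishing at both endpoints is identically zero, so $V^{*}\equiv 0$. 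Hence $\|\nabla V_n\|_{L^\infty}\to 0$, $|\nabla\Phi_n|\to 0$ uniformly, and (since $g(0)=0$) $\|g(|\nabla\Phi_n|)\|_{L^\infty}\to 0$. Inserting these into the universal inequality yields $\sigma_1\le 0$, contradicting $\sigma_1>0$.

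The main obstacle is identifying the correct self-adjoint substitution: the authors used $e^{\lambda H/2}$ in Section 3 to diagonalize the time-dependent problem, but here it is $e^{V/2}$ that removes the drift cleanly and exposes the coefficient $\tfrac12\Delta V+g(|\nabla\Phi|)$ in a form amenable to Poincar\'e. Once that transformation is in hand, the remainder is a standard compactness-plus-monotonicity contradiction, with the radial monotonicity of $\Phi$ (which is equivalent to the one-sided sign of $\partial_r\Phi$ forced by membership in $\sO$) supplying the decisive uniform decay of $\nabla V_n$ to $0$.
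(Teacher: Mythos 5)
Your proof is correct and follows essentially the same route as the paper: argue by contradiction, extract a subsequence with $\lambda_n\to 0$ and $V_n\to V^*$ compactly, show $V^*\equiv 0$ from $\partial_r V^*\ge 0$ and the boundary conditions, and then show that the $R_e$-equation tested against a positive multiplier forces $R_e\equiv 0$ via Poincar\'e, contradicting positivity. The one genuine difference is that you first substitute $\tilde R = R_e\,e^{V/2}$ to put $\sF_2=0$ into self-adjoint form $-\Delta\tilde R = (\tfrac12\Delta V + g(|\nabla\Phi|))\tilde R$, which packages the drift term and the quadratic completion into the clean ``universal inequality'' $\sigma_1\le\|\nabla V\|_{L^\infty}^2+2\|g(|\nabla\Phi|)\|_{L^\infty}$. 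The paper instead multiplies $\sF_2=0$ directly by $R_e$, integrates the $\Delta V$ term by parts, and estimates the cross term $\int R_e\nabla V\cdot\nabla R_e$ and the small coefficients $\tfrac\lambda2\nabla V\cdot\nabla H$, $\tfrac{\lambda^2}4|\nabla H|^2$, $h(|\nabla\Phi|)$ one by one via Schwarz and Poincar\'e, arriving at $\int|\nabla R_e|^2\le\tfrac12\int|\nabla R_e|^2$. Your self-adjoint substitution is a nice cosmetic simplification that trades several term-by-term smallness estimates for one Young-plus-Poincar\'e step, but it is not the ``main obstacle'' you describe: the paper reaches the same conclusion without any further change of unknown, simply by absorbing the drift and first-order terms directly. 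Both versions buy the same thing; yours is a bit tidier to state, the paper's is a bit more elementary.
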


\begin{proof}
Suppose on the contrary that $\sup_{s>0}\|V(s)\|_{C^2}$ is bounded.
Because $\varliminf_{s \to \infty}\la(s)= 0$ and $\partial_r (V+\la H)(s,r)>0$,
there exists a sequence $\{s_n\}_{n \in \mathbb N}$ 
and a function $V^*(\cdot)$ such that 
\begin{gather}
\left\{
\begin{array}{lllll}
 \lambda(s_n) & \to & 0 & \text{in} & \mathbb R,
 \\
 V(s_n, \cdot) & \to & V^*(\cdot) & \text{in} & C^1_{r}(\overline{\Omega}),
 \end{array}\right.
\notag\\
V^*|_{\cC}=V^*|_{\cA}=0,
\lb{boundary2}\\
\partial_r V^*\geq 0.
\lb{p2}
\end{gather}
The boundary condition \er{boundary2} means that $\int_{r_{1}}^{r_{2}} \partial_r V^*(r)\,dr =0$.
This together with \er{p2} implies $\partial_r V^* \equiv 0$.
Using \er{boundary2} again, we have $V^* \equiv 0$.

It follows that
for suitably large $n$ the expressions $|\la(s_n)|$,
$\|V(s_n)\|_{C^1}$, and $\|h(|\nabla (V(s_n)+\la(s_n)H)|)\|_{C^0}$
are arbitrarily small.  
We multiply $\sF_2(\la(s_n),\rho_i(s_n),R_e(s_n),V(s_n))=0$ by $R_e(s_n)$
and integrate by parts over $\Omega$.
Then using Poincar\'e's inequality and taking $n$ suitably large, we obtain 
\begin{align*}
&\int_{\Omega} | \nabla R_e| ^2(s_n) \,dx
\\
&=-\int_{\Omega}
R_e(s_n) \nabla V(s_n)\cdot \nabla R_e(s_n) \,dx
\\
&\quad -\int_{\Omega} \left\{
\frac{\lambda(s_n)}{2}\nabla V(s_n) \cdot \nabla H
+\frac{\lambda^2(s_n)}{4}|\nabla H|^{2}
-h(|\nabla (V(s_n)+\la(s_n)H)|)\right\}R_e^2(s_n)\,dx
\\
&\leq \frac{1}{2}\int_{\Omega} |\nabla R_e|^2(s_n) \,dx.
\end{align*}
Hence $\partial_r R_e(s_n) \equiv0$.  
Since $R_e$ vanishes at the endpoints, we conclude that $R_e(s_n)\equiv 0$,   
which contradicts  the assumed positivity. 
\end{proof}

\begin{lem}\lb{lem(c)}
Assume alternative (i) in Proposition \ref{Global2}.
If $\varlimsup_{s \to \infty}\la(s)= \infty$, then 
$\sup_{s>0}\|V(s)\|_{C^2}$ is unbounded.
\end{lem}
\begin{proof}
Suppose that $\sup_{s>0}\|V(s)\|_{C^2}$ is bounded. 
We take a subsequence as above. 
For suitably large $n$, the expressions $\frac1{\la(s_n)}$
and $\|h(|\nabla (V(s_n)+\la(s_n)H)|)/\la^2(s_n)\|_{C^0}$
are arbitrarily small.  Write $s=s_n$ for brevity.  
Multiplying $\sF_2(\la(s),\rho_i(s),R_e(s),V(s))=0$ 
by $R_e(s)/\la^2(s)$ and then integrating by parts over $\Omega$, 
 we obtain
\begin{align*}
&\int_{\Omega} \frac{|\nabla R_e|^2(s)}{\la^2(s)}+\frac{1}{4} |\nabla H|^{2} R_e^2(s) \,dx
\\
&=\frac{1}{{2\lambda^2(s)}}\int_{\Omega} \left\{
\Delta V(s) - \lambda(s) \nabla V(s) \cdot \nabla H
+2h(|\nabla (V(s_n)+\la(s_n)H)|)\right\}R_e^2(s)\,dx
\\
&\leq \frac{1}{8}\int_{\Omega} |\nabla H|^{2} R_e^2(s) \,dx,
\end{align*}
since $\Delta V(s)$ is bounded.
Once again this leads to $R_e(s)\equiv 0$,  
which contradicts the assumed positivity.
\end{proof}

\begin{lem}\lb{lem(b)}
Assume alternative (i) in Proposition \ref{Global2}.
If 
\[  
\varliminf_{s \to \infty}(\inf_{x \in \Omega } | \nabla ( V(x,s)+\la(s)H(x))|) =0,  
\]
then $\sup_{s>0}\{\|\rho_i(s)\|_{C^0}+\|R_e(s)\|_{C^2}+\|V(s)\|_{C^2}\}$ is unbounded.
\end{lem}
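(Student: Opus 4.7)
The plan is to argue by contradiction along the same lines as Lemmas \ref{lem(a)} and \ref{lem(c)}, though the analysis is considerably more delicate. Suppose $\sup_{s>0}\{\|\rho_i(s)\|_{C^0}+\|R_e(s)\|_{C^2}+\|V(s)\|_{C^2}\}<\infty$ and pick $s_n\to\infty$ with $m_n:=\inf_r \partial_r(V(s_n)+\lambda(s_n)H)\to 0$. First I reduce to the case $\lambda(s_n)\to \lambda^*\in(0,\infty)$: if a subsequence of $\lambda(s_n)$ tends to $0$ or to $\infty$, then $\varliminf_{s\to\infty}\lambda(s)=0$ or $\varlimsup_{s\to\infty}\lambda(s)=\infty$, so Lemma \ref{lem(a)} or Lemma \ref{lem(c)} already forces $\sup_s\|V(s)\|_{C^2}=\infty$, contradicting our assumption. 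Arzel\`a--Ascoli applied to the bounded $C^2$ norms then yields, along a further subsequence, $V(s_n)\to V^*$ and $R_e(s_n)\to R_e^*$ in $C^{1,\alpha}$ for every $\alpha\in(0,1)$, so $\partial_r\Phi^*:=\partial_rV^*+\lambda^*\partial_rH\ge 0$ on $[r_1,r_2]$ and $\partial_r\Phi^*(r^*)=0$ for some $r^*\in[r_1,r_2]$.

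The crux of the argument is the integral formula (see \eqref{R_i1}) obtained by integrating the radial form of $\sF_1=0$ together with the boundary condition $\rho_i|_{\cA}=0$:
\begin{equation*}
r^{d-1}\rho_i(s,r)\,\partial_r\Phi(s,r) = \frac{k_e}{k_i}\int_{r_1}^r t^{d-1}h(|\partial_r\Phi(s,t)|)e^{-\frac{\lambda(s)}{2}H(t)}R_e(s,t)\,dt.
\end{equation*}
Setting $r=r^*$, the left-hand side tends to $0$ along $s_n$ because $\rho_i$ is bounded while $\partial_r\Phi(s_n,r^*)\to 0$ by uniform convergence. Passing to the limit, and noting that the integrand is continuous and non-negative with $h(s)>0$ for $s>0$, one deduces the pointwise dichotomy: for every $t\in[r_1,r^*]$, either $\partial_r\Phi^*(t)=0$ or $R_e^*(t)=0$.

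Choose $r^*$ to be the supremum of the zero set of $\partial_r\Phi^*$ in $[r_1,r_2]$. In the easy case, $\partial_r\Phi^*(t_0)>0$ for some $t_0\in[r_1,r^*]$, so by continuity $R_e^*$ vanishes on an open neighborhood of $t_0$; uniqueness for the second-order linear radial ODE obtained as the $C^{1,\alpha}$-limit of $\sF_2=0$ then forces $R_e^*\equiv 0$ on $[r_1,r_2]$. The integral formula yields $\rho_i^*\equiv 0$; then $\sF_3=0$ becomes $\Delta V^*=0$ with zero Dirichlet data, so $V^*\equiv 0$; but then $\partial_r\Phi^*=\lambda^*\partial_rH>0$ throughout, contradicting $\partial_r\Phi^*(r^*)=0$.

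The main obstacle is the remaining case, in which $\partial_r\Phi^*\equiv 0$ on $[r_1,r^*]$: integrating with $\Phi^*(r_1)=0$ gives $\Phi^*\equiv 0$ and $V^*=-\lambda^*H$ on $[r_1,r^*]$. The subcase $r^*=r_2$ is excluded by $\Phi^*(r_2)=\lambda^*>0$, so $r^*<r_2$ and $\partial_r\Phi^*>0$ on $(r^*,r_2]$. I plan to close this case by two complementary observations: the super-exponential flatness $h(s)=ase^{-b/s}=o(s^n)$ as $s\to 0^+$ (for every $n$), applied to the integral formula, forces $\rho_i^*(r)\to 0$ as $r\to r^{*+}$; on the other hand, on $[r_1,r^*]$ the identity $\Delta V^*=-\lambda^*\Delta H=0$ combined with $\sF_3=0$ gives $\rho_i^*=e^{-\lambda^*H/2}R_e^*$. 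Matching these two expressions for $\rho_i^*$ at $r^*$ (via continuity inherited from elliptic regularity for $V^*$) yields $R_e^*(r^*)=0$, and together with $R_e^*(r_1)=0$ and ODE uniqueness on $[r_1,r^*]$ this reduces the situation to the easy case.
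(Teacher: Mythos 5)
Your overall strategy matches the paper's (contradiction, reduce via Lemmas~\ref{lem(a)} and~\ref{lem(c)} to $\lambda^*\in(0,\infty)$, pass to a subsequential limit, exploit the integral form of $\sF_1=0$), and your ``easy case'' is essentially a repackaging of the paper's first step showing that the first zero of $\partial_r\Phi^*$ must be at $r_1$. The gap is in your ``hard case.''

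Two concrete problems with the hard-case closure. First, the assertion that the super-exponential flatness of $h$ forces $\rho_i^*(r)\to 0$ as $r\to r^{*+}$ does not follow: from the integral formula you get
$\rho_i^*(r)\,\partial_r\Phi^*(r)\lesssim \int_{r^*}^r h(\partial_r\Phi^*(t))\,dt$,
but without monotonicity of $\partial_r\Phi^*$ on $(r^*,r)$ you cannot control $h(\partial_r\Phi^*(t))$ by $h(\partial_r\Phi^*(r))$, and $\partial_r\Phi^*(r)$ in the denominator may vanish arbitrarily fast; the best a priori information is the Lipschitz bound $\partial_r\Phi^*(t)\le L(t-r^*)$, which does not yield your claim. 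Second, the ``matching'' at $r^*$ implicitly uses pointwise continuity of $\rho_i^*$ at $r^*$; but the only convergence you have for $\rho_i(s_n)$ under the $C^0$ bound is weak-star in $L^\infty$, and from $\sF_3=0$ one only gets $\rho_i^* = \Delta V^* + e^{-\lambda^*H/2}R_e^*$ with $\Delta V^*\in L^\infty$, so $\rho_i^*$ need not be continuous at $r^*$, and the two one-sided expressions need not match pointwise. (There is also the degenerate sub-case $r^*=r_1$, where your dichotomy is vacuous and your hard-case argument gives nothing.)

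The paper closes exactly this case by a different device: it never needs a pointwise limit of $\rho_i^*$. Substituting $\sF_3=0$ into the integrated $\sF_1=0$ eliminates $\rho_i^*$ entirely; using $\Delta H=0$ and $R_e^*\ge 0$ the left side is bounded below by $\tfrac12\partial_r\bigl[(r^{d-1}\partial_r\Phi^*)^2\bigr]$, and the factor $h(s)=ase^{-b/s}$ on the right is absorbed into $Ce^{-b/\partial_r\Phi^*}$. Integrating from $t^*$ gives the self-referential inequality
$(\partial_r\Phi^*)^2(r)\le C\int_{t^*}^r\!\!\int_{t^*}^t e^{-b/\partial_r\Phi^*(\tau)}\,d\tau\,dt$,
and evaluating at $r_n:=\inf\{r\le t_0:\partial_r\Phi^*(r)=1/n\}$ (so that $\partial_r\Phi^*\le 1/n$ on $[t^*,r_n]$, which is exactly how non-monotonicity is handled) yields $1/n^2\le Ce^{-bn}$, a contradiction. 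You should replace your hard-case matching argument by this inequality argument, or supply a genuinely different proof that does not rely on pointwise control of $\rho_i^*$ at $r^*$.
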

\begin{proof}

Assume the contrary.  Thus $\sup_{s>0}\{\|\rho_i(s)\|_{C^0}+\|R_e(s)\|_{C^2}+\|V(s)\|_{C^2}\}$ is bounded.  
From Lemma \ref{lem(c)} and the hypothesis  
$\varliminf_{s \to \infty}(\inf_{r \in (r_{1},r_{2}) }   \partial_{r} (V+\lambda H)(r)  )=0$, we see that there exists a sequence $\{s_n\}$
and $(\la^*,\rho_i^*,R_e^*,V^*)$ with $\la^*<\infty $ such that
\begin{gather}
\left\{
\begin{array}{llllll}
 \lambda(s_n) & \to & \la^* & \text{in} & \mathbb R,
 \\
 \rho_{i}(s_n) & \rightharpoonup & \rho_i^* & \text{in} & L^{\infty}_{r}(\overline{\Omega}) & \text{weakly-star},
 \\
 R_{e}(s_n) & \to & R_{e}^* & \text{in} & C^1_{r}(\overline{\Omega}),
 \\
 \partial_r^2 R_{e}(s_n) & \rightharpoonup & \partial_r^2R_{e}^* & \text{in} & L^{\infty}_{r}(\overline{\Omega}) & \text{weakly-star},
 \\
 V(s_n) & \to & V^* & \text{in} & C^1_{r}(\overline{\Omega}),
 \\
 \partial_r^2 V(s_n) & \rightharpoonup & \partial_r^2V^* & \text{in} & L^{\infty}_{r}(\overline{\Omega}) & \text{weakly-star},
 \end{array}\right.
\lb{converge3}\\
R_{e}^*|_{\cC}=R_{e}^*|_{\cA}=V^*|_{\cC}=V^*|_{\cA}=0,
\lb{boundary1} \\
\rho_i^* \geq 0, \quad R_e^* \geq 0,
\lb{p1}\\
\inf_{r \in (r_{1},r_{2}) } \partial_{r} (V^{*}+\lambda^{*} H)(r) =0.
\lb{singular1}
\end{gather}
We shall  show that the limit satisfies 
\begin{equation*}
\sF_j(\lambda^*,\rho_i^*,R_e^*,V^*)=0 \quad \text{for a.e. $x$ and  $j=1,2,3$.}
\end{equation*}
Indeed, the equation $\sF_1(\lambda(s_n),\rho_{i}(s_n),R_{e}(s_n),V(s_n))=0$ 
with $\rho_{i}(s_n,\cdot)|_{\cA}=0$ is equivalent to
\begin{align*}
&r^{d-1} \{ \partial_r (V(s_n,r)+\lambda(s_n) H(r))\} \rho_{i}(s_{n},r)
\\
&=\frac{k_e}{k_i}\int_{r_{1}}^r t^{d-1} h( (\partial_rV(s_n,t)+\lambda(s_n) \partial_r H(t)) )e^{-\frac{\lambda(s_n)}{2}H(t)}R_{e}(s_n,t)\,dt.
\end{align*}
		Multiplying  
by a test function $\varphi \in C^0([r_{1},r_{2}])$ and integrating over $[r_{1},r_{2}]$, we obtain
\begin{align}
&\int_{r_{1}}^{r_{2}} r^{d-1} \{ \partial_r (V(s_n,r)+\lambda(s_n) H(r))\} \rho_{i}(s_{n},r) \varphi \,dr
\notag\\
&=\int_{r_{1}}^{r_{2}} \frac{k_e}{k_i} \left( \int_{r_{1}}^r t^{d-1} h(\partial_rV(s_n,t)+\lambda(s_n) \partial_r H(t))e^{-\frac{\lambda(s_n)}{2}H(t)}R_{e}(s_n,t)\,dt \right) \varphi  \,dr.
\lb{rho0}
\end{align}
		We note that 
\begin{align*}
&{}
\left|\int_{r_{1}}^{r_{2}} r^{d-1}\left[  \partial_r (V(s_n,r)+\lambda(s_n) H(r)) \rho_{i}(s_{n},r)
-\partial_r (V^{*}(r)+\lambda^{*} H(r))\} \rho_{i}^{*}(r)\right]\varphi \,dr\right|
\\
&\leq 
\left|\int_{r_{1}}^{r_{2}}r^{d-1} \left\{\partial_r (V(s_n,r)+\lambda(s_n) H(r))
-\partial_r (V^{*}(r)+\lambda^{*} H(r)) \right\} \rho_{i}(s_n,r) \varphi \,dr\right|
\\
&\quad
+\left|\int_{r_{1}}^{r_{2}} r^{d-1} (\rho_i(s_n,r) - \rho_i^*(r))\varphi {\partial_r (V^{*}(r)+\lambda^{*} H(r)) } \,dr\right|.
\end{align*}
		So passing to the limit $n\to\infty$ in \er{rho0} and making use of  \er{converge3}, we obtain 
\begin{align*}
&\int_{r_{1}}^{r_{2}} r^{d-1} \{ \partial_r (V^{*}(r)+\lambda^{*} H(r))\} \rho_{i}^{*}(r) \varphi  \,dr
\notag\\
&=\int_{r_{1}}^{r_{2}} \frac{k_e}{k_i} \left( \int_{r_{1}}^r t^{d-1} h(\partial_rV^{*}(t)+\lambda^{*} \partial_r H(t))e^{-\frac{\lambda^{*}}{2}H(t)}R_{e}^{*}(t)\,dt \right) \varphi  \,dr,   
 \quad   \forall\varphi \in C^0([r_{1},r_{2}]). 
 \end{align*} 
This immediately yields 
\begin{equation}\lb{rho1}
r^{d-1} \{ \partial_r (V^{*}+\lambda^{*} H)(r)\} \rho_{i}^{*}(r) 
=\frac{k_e}{k_i}
\int_{r_{1}}^r t^{d-1} h( \partial_rV^{*}(t)+\lambda^{*} \partial_r H(t))e^{-\frac{\lambda^{*}}{2}H(t)}R_{e}^{*}(t)\,dt \quad a.e.,
\end{equation}
which is equivalent to $\sF_1(\lambda^*,\rho_i^*,R_e^*,V^*)=0$ a.e.

We can write  $\sF_2(\lambda(s_n),\rho_{i}(s_n),R_{e}(s_n),V(s_n))=0$ and
$R_{e}(s_n,\cdot)|_{\cA}=R_{e}(s_n,\cdot)|_{\cC}=0$ weakly in the form  
\[
\int_\Omega \nabla R_{e}(s_n)\cdot \nabla \varphi \,dx
+\frac{(\lambda(s_n))^2}{4}\int_\Omega |\nabla H|^{2} R_{e}(s_n)\varphi \,dx
=-\int_0^L G_{2n}\varphi \,dx, \qu \text{ $\forall\varphi \in H^1_{0r}(\Omega)$},
\]
where 
\[
G_{2n}:= -\nabla V(s_n) \cdot \nabla R_{e}(s_n)
+\left\{\frac{\lambda(s_n)}{2} \nabla V(s_n) \cdot \nabla H
-\Delta V(s_n)
-h\left(|\nabla (V(s_n)+\lambda(s_n) H)|\right)\right\}R_e(s_n).
\]
Noting that 
\begin{align*}
&{}
\left|\int_\Omega \{\Delta V(s_n) R_{e}(s_n)- (\Delta V^*) R_{e}^*\}\varphi \,dx\right|
\\
&\leq 
\left|\int_\Omega \Delta V(s_n)(R_{e}(s_n)-R_{e}^*) \varphi \,dx\right|
+\left|\int_0^L (\Delta V(s_n) - \Delta V^*) {R_{e}^*\varphi} \,dx\right|, 
\end{align*}
taking the limit $n\to\infty$ in the weak form, and using \er{converge3}, we have
\[
\int_\Omega \nabla R_{e}^{*}\cdot \nabla \varphi \,dx
+\frac{(\lambda^{*})^2}{4}\int_\Omega |\nabla H|^{2} R_{e}^{*}\varphi \,dx
=-\int_0^L G_{2n}^{*}\varphi \,dx \qu \text{for $\forall\varphi \in H^1_{0r}(\Omega)$},
\]
where 
\[
G_{2}^{*}:= -\nabla V^{*} \cdot \nabla R_{e}^{*}
+\left\{\frac{\lambda^{*}}{2} \nabla V^{*} \cdot \nabla H
-\Delta V^{*}
-h\left(|\nabla (V^{*}+\lambda^{*} H)|\right)\right\}R_e^{*} \in L^{2}_{r}(\Omega).
\]
This means that $R_e \in H^1_{r}(\Omega)$ is a weak solution of $\sF_2=0$.
Furthermore, a standard theory of elliptic equations ensures that
$R_e \in H^2_{r}(\Omega)\cap H^1_{0r}(\Omega)$ is also a strong solution to $\sF_2(\lambda^*,\rho_i^*,R_e^*,V^*)$ $=0$.
Similarly it is easily seen that $\sF_3(\lambda^*,\rho_i^*,R_e^*,V^*)=0$.
We now set
\[
 r_*:=\inf\{r \in (r_{1},r_{2});\partial_{r} (V^{*}+\lambda^{*} H)(r) =0\}.
\]
We claim that $r_*=r_1$.  
On the contrary, suppose $r_* >r_{1}$. 
The equation \er{rho1}, which would hold for a sequence $r_\nu\to r_*$, yields the inequality  
\begin{gather*}
0=(r_{*})^{d-1} \partial_r (V^{*}+\lambda^{*} H)(r_{*}) \| \rho_{i} \|_{L^{\infty}}
\geq \frac{k_e}{k_i}
\int_{r_{1}}^{r_{*}} t^{d-1} h( \partial_r(V^{*}+\lambda^{*} H) )e^{-\frac{\lambda^{*}}{2}H}R_{e}^{*}\,dt.
\end{gather*}
Together with the positivity \er{p1} this would imply that 
$(h(\partial_r(V^{*}+\lambda^{*} H))e^{-\frac{\lambda^*}{2}H}R_{e}^*)(r)=0$ 
for $r \in [r_{1},r_{*}]$.
From the definition of $r_*$, 
we see that 
\begin{equation}\lb{regular1}
\partial_r(V^{*}+\lambda^{*} H)(r)>0 \quad \text{for $r\in [r_{1},r_*)$,}
\end{equation} 
so that $h(\partial_r(V^{*}+\lambda^{*} H))>0$ on $[r_{1},r_*)$.
Therefore, $R_e^*\equiv0$  in $[r_{1},r_*)$.
Hence from \er{rho1} and \er{regular1},  
$\rho_i^*=0$ a.e.  in $[r_{1},r_*)$.
Now from the equation $\sF_3(\lambda^*,\rho_i^*,R_e^*,V^*)=0$ and $\Delta H=0$,
we would see that $r^{d-1}\partial_r(V^{*}+\lambda^{*} H)$ is a constant in $[r_{1},r_*]$.  Thus 
$r^{d-1}\partial_r(V^{*}+\lambda^{*} H)=0$ in $[r_{1},r_*]$.
This contradicts the definition of $r_*$.
Therefore $r_* =r_{1}$.  

Let us now suppose that there exists $t_0>r_{1}$ for which 
$\partial_r(V^{*}+\lambda^{*} H)(t_0)>0$.   Then we define 
\[
 t^*:=\sup\{r<t_0 ; \partial_r(V^{*}+\lambda^{*} H)(r)=0 \}.
\]
Note that $t^* \in [r_{1},t_0)$ and $\partial_r(V^{*}+\lambda^{*} H)(t^{*})=0$.
On the other hand, we multiply the equation 
$\sF_1(\lambda^*,\rho_i^*,R_e^*,V^*)=0\ a.e.$  by $r^{d-1}$,
integrate the result over $[t^*,t]$ for any $t \in [t^*,t_0]$,
and use $\sF_3(\lambda^*,\rho_i^*,R_e^*,V^*)=0$ to obtain
\begin{align}
&t^{2d-2}\{\partial_r(V^{*}+\lambda^{*} H)\}\left( \frac{1}{t^{d-1}}\partial_{r} (t^{d-1} \partial_{r}  V^*)+e^{-\frac{\lambda^*}{2}H}R_{e}^*\right)
\notag\\
&= \frac{k_e}{k_i} t^{d-1}
\int_{t^{*}}^t t^{d-1} h( \partial_r(V^{*}+\lambda^{*} H))e^{-\frac{\lambda^{*}}{2}H}R_{e}^{*}\,d\tau
\quad \text{for a.e. $t \in [t^*,t_0]$}. 
\lb{es1}
\end{align}
By \er{p1}, \er{singular1}, and $\Delta H=0$, the left hand side is estimated from below as
\begin{align*}
&t^{2d-2}\{\partial_r(V^{*}+\lambda^{*} H)\}\left( \frac{1}{t^{d-1}}\partial_{r} (r^{d-1} \partial_{r}  V^*)+e^{-\frac{\lambda^*}{2}H}R_{e}^*\right)
\\
&\geq t^{d-1}\partial_r(V^{*}+\lambda^{*} H) \partial_{r} (r^{d-1} \partial_{r}  (V^*+\lambda^{*} H))
=\frac{1}{2}\partial_r \left[\left\{r^{d-1} \partial_{r}  (V^*+\lambda^{*} H)\right\}^2\right] \ a.e. 
\end{align*}
since $\partial_r V^*$ is absolutely continuous with respect to $r$.
The integrand on the right hand side of \eqref{es1} is estimated from above by 
$C e^{-b(\partial_r(V^{*}+\lambda^{*} H))^{-1}}$, due to the behavior of $h$.  
Consequently, substituting these expressions into \er{es1}, 
integrating the result over $[t^*,r]$, 
and using $\partial_r(V^{*}+\lambda^{*} H)(t^*)=0$, we have
\begin{equation}\lb{es2}
\left(\partial_r(V^{*}+\lambda^{*} H)\right)^2(r)
\leq C \int_{t^*}^r\int_{t^*}^t e^{-b(\partial_r(V^{*}+\lambda^{*} H))^{-1}} \,d\tau dt
\quad \text{for $r \in [t^*,t_0]$}. 
\end{equation}
Now we define $r_n$ as 
\[
 r_n:=\inf\left\{r\leq t_0;\ \ \partial_r(V^{*}+\lambda^{*} H)(r)= \frac{1}{n}\right\}.
\]
Notice that $t^* < r_n$ and $\partial_r(V^{*}+\lambda^{*} H)(r)\leq 1/n$ for any $r\in [t^*,r_n]$,
since the continuous function $\partial_r(V^{*}+\lambda^{*} H)$ vanishes at $r=t^*$.
Then we evaluate \er{es2} at $r=r_n$ to obtain
\[
\frac{1}{n^2}
\leq C \int_{t^*}^{r_{n}}\int_{t^*}^t e^{-b(\partial_r(V^{*}+\lambda^{*} H))^{-1}} \,d\tau dt
\leq C e^{-{bn}}.
\]
For suitably large $n$, this clearly does not hold.
So once again we have  a contradiction.

Our conclusion is that $r_*=r_{1}$ and $\partial_r(V^{*}+\lambda^{*} H)\equiv 0$ .
Hence  $\Delta V^*\equiv 0$  so that 
the equation  $\sF_2(\lambda^*,\rho_i^*,R_e^*,V^*)=0$ yields
$\Delta(e^{-\la^* H/2}R_e^*)=0$.
Solving this with \er{boundary1} yields $R_e^* \equiv 0$.
Then from $\sF_3(\lambda^*,\rho_i^*,R_e^*,V^*)=0$  
we obtain $\rho_i^* \equiv 0$.
Solving $\sF_3(\lambda^*,\rho_i^*,R_e^*,V^*)=0$ with \er{boundary1} 
and $\rho_i^* \equiv R_e^* \equiv 0$,
we also have $V^* \equiv 0$.
Consequently $\la^*=0$ holds and  $\varliminf_{s\to0}\la(s)=0$.
This contradicts Lemma \ref{lem(a)}, since
$\sup_{s>0}\|V(s)\|_{C^2}$ is bounded.
\end{proof}

Let us reduce Condition (d) in Theorem \ref{Global1} to a simpler condition.
We prefer to write the result directly in terms of the ion density $\rho_i$ and
the electron density $\rho_e=R_e e^{-\frac{\lambda}{2}H}$.

\begin{lem}\lb{lem(d)}
Assume the global positivity alternative (i) in Proposition \ref{Global2}. \  
If  the maximum of the densities $\sup_{s>0}\{\|\rho_i(s)\|_{C^0}+\|\rho_e(s)\|_{C^0}\}$ 
is bounded, then $\sup_{s>0}\{\|\rho_i(s)\|_{C^1}+\|R_e(s)\|_{C^2}+\|V(s)\|_{C^3}\}$ 
is also bounded.
\end{lem}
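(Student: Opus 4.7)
The approach is a bootstrap in the radial setting, repeatedly using the explicit integral representations of solutions to the radial Poisson and transport equations, and invoking Lemmas \ref{lem(a)}--\ref{lem(b)} at the right moment. Throughout, write $u=V+\lambda H$ and $E=\partial_r u$.

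First I would observe that in the radial case the Poisson equation $\sF_3=0$ integrates to
$$\partial_r V(r)=r^{1-d}\!\left(\int_{r_1}^{r}t^{d-1}(\rho_i-\rho_e)(t)\,dt+c_1\right),$$
with $c_1$ determined (and uniformly bounded) by the Dirichlet data $V(r_1)=V(r_2)=0$. Differentiating once more expresses $\partial_r^2 V$ as a bounded multiple of the integral plus $\rho_i-\rho_e$, so $\|V\|_{C^2}$ is uniformly controlled by $\|\rho_i\|_{C^0}+\|\rho_e\|_{C^0}$. With $\|V\|_{C^2}$ bounded, Lemmas \ref{lem(a)} and \ref{lem(c)} apply directly and force $0<c_\lambda\le\lambda(s)\le C_\lambda<\infty$, and hence $R_e=\rho_e e^{\lambda H/2}$ is uniformly bounded in $L^\infty$.

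Next I would promote this $L^\infty$ bound on $R_e$ to a uniform $C^2$ bound. Viewing $\sF_2=0$ as a uniformly elliptic equation for $R_e$ with zero Dirichlet data, drift $b=-\nabla V\in L^\infty$, and zero-order coefficient
$$c=\tfrac{\lambda}{2}\nabla V\cdot\nabla H-\Delta V+\tfrac{\lambda^{2}}{4}|\nabla H|^{2}-h(|\nabla u|)\in L^\infty$$
(here I use $\Delta V=\rho_i-\rho_e\in L^\infty$, $\lambda$ bounded, and the fact that $h$ extends smoothly to $s=0$), De Giorgi--Nash--Moser theory yields a uniform $\|R_e\|_{C^{1,\alpha}}\le C\|R_e\|_{L^\infty}$. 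Rewriting $\sF_2=0$ as the radial ODE
$$\partial_r^{2}R_e=-\!\left(\tfrac{d-1}{r}+\partial_r V\right)\!\partial_r R_e+cR_e,$$
the right-hand side now lies in $C^0$ uniformly, so $\|R_e\|_{C^2}$ is uniformly bounded. At this point $\|\rho_i\|_{C^0}+\|R_e\|_{C^2}+\|V\|_{C^2}$ is uniformly bounded, so the contrapositive of Lemma \ref{lem(b)} delivers a uniform lower bound $\inf_{x\in\Omega}|\nabla u|(s)\ge c_0>0$.

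Armed with $E\ge c_0$, I would return to the explicit representation from Lemma \ref{rho_iLemma},
$$\rho_i(r)=\frac{k_e}{k_i}\,r^{1-d}E(r)^{-1}\int_{r_1}^{r}t^{d-1}h(E(t))\,e^{-\lambda H(t)/2}R_e(t)\,dt.$$
Differentiating once and using $V\in C^2$, $R_e\in C^2$, $\lambda$ bounded, and $E\ge c_0$ yields a uniform bound on $\|\rho_i\|_{C^1}$. Feeding the now $C^1$-bounded source $\rho_i-\rho_e$ back into the radial formula for $\partial_r^{2}V$ and differentiating once more gives $\partial_r^{3}V\in C^0$ uniformly, i.e., $\|V\|_{C^3}$ is uniformly bounded, completing all three bounds.

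The main obstacle is the threading of Lemmas \ref{lem(a)}--\ref{lem(b)} into the bootstrap at the right step: $\|R_e\|_{C^2}$ must already be in hand before Lemma \ref{lem(b)}'s contrapositive produces $E\ge c_0$, yet one might fear a circular dependence since $h(E)$ enters the coefficients of the $R_e$ equation. The radial structure together with the smooth vanishing of $h$ at $0$ is what cuts the circle: the coefficient $c$ stays in $L^\infty$ irrespective of whether $E$ degenerates, so the $C^2$ bound on $R_e$ is achievable first, and only afterwards is the positivity of $E$ invoked to control the transport equation for $\rho_i$. In a non-radial domain the initial step (from $L^\infty$ on the densities to $C^2$ on $V$) would already fail, which is precisely why Theorem \ref{thm4} is restricted to the radial case.
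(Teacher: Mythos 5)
Your proof is correct and follows essentially the same bootstrap as the paper: bound $\|V\|_{C^2}$ from the Poisson equation, use Lemma~\ref{lem(c)} to bound $\lambda$ from above (hence $\|R_e\|_{C^0}$), promote to $\|R_e\|_{C^2}$ by one-dimensional elliptic estimates, invoke Lemma~\ref{lem(b)} in the contrapositive to get a uniform lower bound on $\partial_r(V+\lambda H)$, then read off $\|\rho_i\|_{C^1}$ from the explicit integral formula~\eqref{R_i1} and finally $\|V\|_{C^3}$ from $\sF_3=0$. The only small deviations are that you additionally invoke Lemma~\ref{lem(a)} for a lower bound on $\lambda$, which is not actually needed (since $e^{\lambda H/2}\le e^{\lambda/2}$ uses only the upper bound), and you cite De~Giorgi--Nash--Moser (which by itself gives only $C^\alpha$, not $C^{1,\alpha}$) where the paper simply appeals to the elementary one-dimensional regularity of the radial ODE; neither affects the validity of the argument.
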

\begin{proof}
It is clear from $\sF_3=0$
together with the definition  $\rho_e=R_e e^{-\frac{\lambda}{2}H}$, that \begin{equation}\lb{bound_v1}
\sup_{s>0}\|V(s)\|_{C^2}
\leq C \sup_{s>0}\{\|\rho_i(s)\|_{C^0}+\|\rho_e(s)\|_{C^0}\}
<+\infty. 
\end{equation}
Then  from Lemma \ref{lem(c)} we have 
\begin{equation}\lb{bound_la1}
\sup_{s>0}|\la(s)|<+\infty.
\end{equation}
From \er{bound_la1} and the boundness of $\|\rho_e(s)\|_{C^0}$, 
we also have 
\begin{equation}\lb{bound_R1}
\sup_{s>0}\|R_e(s)\|_{C^0}<+\infty.  
\end{equation}
Applying a standard estimate of one-dimensional elliptic equations to $\sF_2=0$
with \er{bound_v1}--\er{bound_R1}, 
we  infer that $\sup_{s>0}\|R_e(s)\|_{C^2}<+\infty$.
Now  Lemma \ref{lem(b)} implies that  
$\varliminf_{s \to \infty}(\inf_{r \in (r_{1},r_{2}) }  \partial_{r} ( V(r,s)+\la(s)H(r))) > 0$.
Together with  \er{R_i1}, this leads to $\sup_{s>0}\|\rho_i(s)\|_{C^1}<+\infty$.
Finally the finiteness of $\sup_{s>0}\|V(s)\|_{C^3}$
follows from $\sF_3(\lambda(s),\rho_i(s),R_e(s),V(s))=0$.
\end{proof}

We conclude with the following {\it main result}, essentially Theorem \ref{thm4}, 
which asserts that {\it either} $\rho_i$ and $\rho_e$ are positive along all of $\sK$ 
with $0<s<\infty$ and 
$\rho_i + \rho_e$ is unbounded 
{\it or else} there is a half-loop of positive solutions going from $\lambda^{*}$ to $\lambda^{\#}$. 

\begin{thm} \label{mainthm}
One of the following two alternatives occurs:
\begin{enumerate}[(A)]
\item 
For all $s\in(0,\infty)$, both $\rho_i(s,x)$ and $\rho_e(s,x)$  
are positive  for all $x \in \Omega$.   Furthermore, 
\[
\varlimsup_{s\to\infty}\{\|\rho_i(s)\|_{C^0}+\|\rho_e(s)\|_{C^0}\}=\infty.
\]
\item 
There exists $s^\#>0$ such that the following conditions  hold: 
\begin{enumerate}[(1)]
\item Both $\rho_i(s,x)$ and $\rho_e(s,x)$ are positive for any $s\in(0,s^\#)$ and $x \in \Omega$;
\item $(\la(s^\#),\rho_i(s^\#),\rho_e(s^\#),V(s^\#))=(\lambda^\#,0,0,0)$.
\end{enumerate}
\end{enumerate}
\end{thm}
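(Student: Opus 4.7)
The plan is to piece together Proposition \ref{Global2} with the five-alternative dichotomy of Theorem \ref{Global1} and the four Lemmas \ref{lem(a)}--\ref{lem(d)}. Proposition \ref{Global2} already splits the behaviour of $\sK$ into two mutually exclusive cases: either (i) both $\rho_i(s,\cdot)$ and $R_e(s,\cdot)$ stay strictly positive for every $s\in(0,\infty)$, or else (ii) the curve returns to the trivial branch at some finite $s^\#>0$ at a new voltage $\lambda^\#\neq\lambda^*$. In case (ii) I would read off alternative (B) at once: the relation $\rho_e=R_e e^{-\lambda H/2}$ shows that $\rho_e$ inherits the positivity of $R_e$ on $(0,s^\#)$, and the identity $(\lambda(s^\#),\rho_i(s^\#),R_e(s^\#),V(s^\#))=(\lambda^\#,0,0,0)$ translates directly into the endpoint condition (2).

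The real work lies in case (i). There it suffices to show that $\|\rho_i(s)\|_{C^0}+\|\rho_e(s)\|_{C^0}$ cannot remain bounded as $s\to\infty$. Theorem \ref{Global1} lists five alternatives (a)--(e), and I would first dispose of the loop alternative (e): a finite period $T$ would force $R_e(T,\cdot)=R_e(0,\cdot)\equiv 0$, which contradicts the strict positivity of $R_e(T,\cdot)$ asserted by case (i) of Proposition \ref{Global2}. Hence one of (a)--(d) must hold.

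Next I would argue by contradiction, assuming $M:=\sup_{s>0}\{\|\rho_i(s)\|_{C^0}+\|\rho_e(s)\|_{C^0}\}<\infty$. Lemma \ref{lem(d)} then promotes this density bound to boundedness of $\|\rho_i(s)\|_{C^1}+\|R_e(s)\|_{C^2}+\|V(s)\|_{C^3}$, which immediately excludes alternative (d); inspecting its proof one also records the intermediate bounds $\sup_{s>0}\|V(s)\|_{C^2}<\infty$ and $\sup_{s>0}|\lambda(s)|<\infty$. The contrapositives of Lemmas \ref{lem(a)} and \ref{lem(c)} now exclude (a) and (c), since either of those alternatives (under case (i)) forces $\sup\|V(s)\|_{C^2}=\infty$. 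Finally, the contrapositive of Lemma \ref{lem(b)} excludes (b) because $\|\rho_i(s)\|_{C^0}+\|R_e(s)\|_{C^2}+\|V(s)\|_{C^2}$ is bounded under our assumption. Combined with the earlier exclusion of (e), this contradicts Theorem \ref{Global1}. Therefore $\varlimsup_{s\to\infty}\{\|\rho_i(s)\|_{C^0}+\|\rho_e(s)\|_{C^0}\}=\infty$, and together with the positivity from case (i) this yields alternative (A).

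The only delicate points are the exclusion of the loop alternative (e), which relies on the strict positivity of $R_e$ along the entire positive branch, and the careful chaining of the contrapositives of Lemmas \ref{lem(a)}--\ref{lem(d)} so that boundedness of only the $C^0$-norms of the densities is enough to trigger each exclusion; the remaining work is purely bookkeeping once those lemmas are in hand.
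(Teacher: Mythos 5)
Your proof is correct and follows the same approach as the paper: reduce via Proposition \ref{Global2} to the five alternatives of Theorem \ref{Global1}, then chain Lemmas \ref{lem(a)}--\ref{lem(d)} to rule out (a)--(d) under a bounded-density hypothesis and exclude (e) by positivity. The only cosmetic difference is your exclusion of the loop alternative (e), via $R_e(T,\cdot)=R_e(0,\cdot)\equiv0$ from periodicity together with (C1), while the paper appeals to the negativity of $\rho_i,R_e$ on the $s<0$ part of the loop via (C2); both work, and your organization as an explicit contradiction rather than the paper's direct implication chain is an equivalent bookkeeping choice.
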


\begin{proof}  (B) is implied by the second alternative (ii) in Proposition \ref{Global2}.  
So let us suppose that (B) does not hold.
Then the first alternative (i) in  Proposition \ref{Global2} must hold.  
Now in Proposition \ref{Global1} there are five alternatives.  Alternative (e) cannot happen because 
$\rho_i$ and $R_e$ are negative on part of the loop. 
Lemmas \ref{lem(a)}--\ref{lem(b)} assert that any one of (a) or (b) or (c) implies that 
$\sup_{s>0}\{\|\rho_i(s)\|_{C^0}+\|R_e(s)\|_{C^2}+\|V(s)\|_{C^2}\}$ is unbounded.  
Then Lemma \ref{lem(d)} leads to the unboundedness of 
$\sup_{s>0}\{\|\rho_i(s)\|_{C^0} + \|\rho_e(s)\|_{C^0}\}$.   
This means that (A) holds. 
\end{proof}

\bigskip

\noindent
{\bf Acknowledgement:} 
The authors would like to thank the anonymous reviewers for their valuable comments and suggestions.

\bigskip

\noindent
{\bf DECLARATIONS:} 

{\bf Funding and/or Conflict of Interest / Competing Interests:

The work of M. Suzuki was supported by JSPS KAKENHI Number 21K03308.

No additional funding was received by either author.  

There are no competing or conflicting interests to declare. }

\end{document}